\newtheorem{theoreme}{Theorem}[section]
\newtheorem{lemme}{Lemma}[section]
\newtheorem{proposition}{Proposition}[section]
\newtheorem{definition}{Definition}[section]
\newcommand{\R}{\mathbb{R}}
\newcommand{\N}{\mathbb{N}}
\newcommand{\norm}[1]{\left\Vert#1\right\Vert}
\renewcommand{\b}{\beta}
\newcommand{\bc}{\begin{center}}
	\newcommand{\ec}{\end{center}}
\newcommand{\benu}{\begin{enumerate}}
	\newcommand{\eenu}{\end{enumerate}}
\newcommand{\real}{\mathbb R}
\title{\bf {Asymptotic properties and drift parameter estimations of the ergodic double Heston model based on continuous-time observations}}
\author[1]{Mohamed Ben Alaya}
\author[1,2]{Houssem Dahbi\thanks{  \sffamily Corresponding author}}
\author[2]{Hamdi Fathallah}
\affil[1]{Universit\'e de Rouen Normandie, Laboratoire de Math\'ematiques Rapha\"el Salem, Avenue de l'universit\'e, BP.12 F76801 Saint-\' Etienne-du-Rouvray, (France)}
\affil[2]{Universit\'e de Sousse, Laboratoire LAMMDA,\'Ecole Sup\'erieure des Sciences et de Technologie de Hammam Sousse, Rue Lamine El Abbessi 4011 Hammam Sousse, (Tunisie)}
\begin{document}
	\maketitle			
	\maketitle

	\begin{abstract}
		The double Heston model is one of the most popular option pricing models in financial theory. It is applied to several issues such that risk management and volatility surface calibration. This paper deals with the problem of global parameter estimations in this model. Our main stochastic results are about the stationarity and the ergodicity of the double Heston process. The statistical part of this paper is about the maximum likelihood and the conditional least squares estimations based on continuous-time observations; then for each estimation method, we study the asymptotic properties of the resulted estimators in the ergodic case.
		
	\end{abstract}
	\makeatletter{\renewcommand*{\@makefnmark}{}
		\footnotetext{E-mail adresses:\\
			\url{mohamed.ben-alaya@univ-rouen.fr}\\
			\url{houssem.dahbi@univ-rouen.fr}, \url{houssemdahbi@essths.u-sousse.tn}\\
			\url{hamdi.fathallah@essths.u-sousse.tn}\\			
			\quad Key words: Double Heston model, Stochastic volatility, Affine diffusion, Classification, Stationarity, Ergodicity, Maximum liklelihood estimation, Conditional least squares estimation, Continuous-time observations, Asymptotic behavior.}\makeatother}				
	
 \section{Introduction}
 
	 In this paper, we study a stochastic volatility model, called double Heston model, represented by the diffusion process $Z=(Y^{(1)},Y^{(2)},X)^{\top}$ in the $3$-dimensional canonical state space $\mathcal{D}:=[0,\infty)^2\times (-\infty,\infty)$, strong solution of the following stochastic differential equation (SDE)
	\begin{equation}\label{model01}
		\begin{cases}
			\mathrm{d}Y_t &=(a-b Y_t) \mathrm{d}t+ \text{diag}(\sigma_1)S(Y_t)\mathrm{~d}B_t,\\
			\mathrm{d}X_t &=(m-\kappa^{\top} Y_t-\theta X_t) \mathrm{d}t + \sigma_2^{\top}S(Y_t)(\rho \mathrm{~d}B_t+\bar{\rho}\mathrm{~d}W_t),
		\end{cases}
	\end{equation}
	where $Y_t=(Y_t^{(1)},Y_t^{(2)}), a=(a_1,a_2)^{\top}\in[0,\infty)^2$, $m\in(-\infty,\infty)$, $b=\begin{bmatrix}
		b_{11}&0\\b_{21}&b_{22}
	\end{bmatrix}$, with $b_{21}\in(-\infty,0]$ and $b_{11},b_{22}\in(-\infty,\infty)$, $\kappa=(\kappa_1,\kappa_2)^\top\in(-\infty,\infty)^2$, $\theta\in(-\infty,\infty)$, $\text{diag}(\sigma_1)=\begin{bmatrix}
	    \sigma_{11}&0\\
     0&\sigma_{12}
	\end{bmatrix}$, with ${\sigma_1=(\sigma_{11},\sigma_{12})^{\top}\in(0,\infty)^2}$, $\sigma_2=(\sigma_{21},\sigma_{22})^{\top}\in(0,\infty)^2$,  $S:(0,\infty)^2\ni(y_1,y_2)\mapsto S(y_1,y_2):=\begin{bmatrix}
		\sqrt{y_1}&0\\0&\sqrt{y_2}
	\end{bmatrix}$, $\rho$ and $\bar{\rho}$ are two diagonal matrices in $\mathcal{M}_2$ such that $\rho^2+\bar{\rho}^2=\mathbf{I}_2$ and $B$ and $W$ are  $2$-dimensional standard Wiener processes independent of $Y_0$ and $X_0$ such that $\mathbb{P}(Y_0\in(0,\infty)^2)=1$. Note that the first component of the double Heston model is a CIR process, the second component given $Y^{(1)}$ is an extended CIR process, and the last component given $Y$ is an extended Vasicek process.\\
    
	The stochastic volatility is present in almost all financial products, including equity, commodity, fixed income, foreign exchange, etc.
	Stochastic volatility models are becoming more and more popular, especially for exotic derivatives that depend on the forward skew and smile of the underlying asset. This class of models has received considerable attention in the last few years, thanks to the assumption that the volatility of the underlying asset prices is inconstant. In financial theory, among the most popular stochastic volatility models is the Heston model introduced in \cite{Heston1993}. It is considered as an extension of the Black and Schloes model, see \cite{Black1973}, which is very limited in practice due to its constant volatility: for example, the real-world volatility smile phenomenon is not compatible with constant volatility models. The Heston model takes into account the stochastic volatility that is driven by a Cox-Ingersoll-Ross (CIR) process introduced in \cite{Cox}. Nevertheless, it is well known that, in many cases, the Heston stochastic volatility model is not always able to fit the implied volatility skews or smiles very well, especially at short maturities. To address this problem, one remedy is to add additional parameters, which allows the model to be more flexible. In the literature, among the many extensions proposed, Christoffersen et al. in \cite{Christoffersen2009} introduced a new extension named the double Heston model that specifies a two-factor structure for volatility in their model to explore the correlation between the volatility and the smile figure. In the financial literature, Zhang and Feng in \cite{Zhang2019} studied the pricing of American put options under the double Heston model. 
	Orzechowski in \cite{Orzechowski2021} and compared the Heston model with the double Heston model of Christoffersen et al. in terms of computational speed, based on the example of pricing European calls by calculating their characteristic functions and inverse Fourier transforms. 
 Fallah and Mehrdoust in \cite{Fallah2019} studied the existence and uniqueness of the solution to the stochastic differential equation of the double Heston model, which is defined by two independent variance processes with non-Lipschitz diffusions.\\
 
 At this stage, we note that the class of affine stochastic volatility processes is a subclass of affine processes introduced by Duffie et al. \cite{Duffie}, which contains a large class of important Markov processes, such as affine diffusion processes introduced by Filipovic and Mayerhofer in \cite{Filipovic}. It is worthy to note that the double Heston model \eqref{model01} is an affine diffusion model denoted by $\mathit{AD}(2,1)$ which is well characterized on the state space $[0,\infty)^{2}\times(-\infty,\infty)$ and extends several classic models such as Vasicek, CIR and Heston models. We also have to note that the double Heston model $\eqref{model01}$ is not exactly the one studied by Fallah and Mehrdoust in \cite{Fallah2019} nor a direct extension of the Heston model treated, for example, in \cite{Barczy} without jump. However, these two models are not affine in the sense of \cite{Duffie}, but in the language of financial mathematics, provided that $\kappa_1=\frac{\sigma^2_{21}}{2}$ and $\kappa_2=\frac{\sigma^2_{22}}{2}$, one can interpret $\tilde{X}_t:= \exp\left(X_t-m+\left(\frac{\sigma_{21}^2}{2}+\frac{\sigma_{22}^2}{2}\right)t\right)$
as the asset price, $X_t-m+\left(\frac{\sigma_{21}^2}{2}+\frac{\sigma_{22}^2}{2}\right)t$ as the log-price (log-spot) and $\sigma_{21}\sqrt{Y_t^{(1)}}$ and $\sigma_{22}\sqrt{Y_t^{(2)}}$ as the volatilities of the asset
price at time $t\in(0,\infty)$. In fact, using the SDE $\eqref{model01}$, by applying Itô's formula, we get $\mathrm{d}\tilde{X}_t =\left(m+\frac{\sigma_{21}^2}{2}+\frac{\sigma_{22}^2}{2}\right)\tilde{X}_t\mathrm{d}t + \sigma_2^{\top}\tilde{X}_tS(Y_t)(\rho \mathrm{~d}B_t+\bar{\rho}\mathrm{~d}W_t)$, which represents the second component in the double Heston model studied in \cite{Fallah2019}.\\

    The aim of this paper is fourfold. Firstly, we introduce a classification of the model $\eqref{model01}$ with respect to $b$ and $\theta$ according to the asymptotic behavior of $\mathbb{E}(Z_t)$ as $t$ tends to infinity. Essentially, we define the subcritical case when $\mathbb{E}(Z_t)$ converges, the critical case when $\mathbb{E}(Z_t)$ has a polynomial growth and the supercritical case when $\mathbb{E}(Z_t)$ has an exponential growth. Secondly, using affine properties of our process, we prove that if $b_{11},b_{22},\theta\in(0,\infty)$, then there exists a unique stationary distribution $Z_{\infty}=(Y_\infty,X_\infty)$ given by the Fourier-Laplace transform
	\begin{equation*}
		\mathbb{E}\left(e^{\nu^{\top} Z_\infty}\right)=\mathbb{E}\left(e^{-\lambda^{\top} Y_\infty +i \mu X_\infty}\right)=\exp\left(a^{\top}\displaystyle\int_0^{\infty} K_s\left(-\lambda,\mu \right)\mathrm{d}s+\dfrac{i\mu m}{\theta}\right),
	\end{equation*}
	where $\lambda$ is a two-dimensional complex vector with positive real parts, $\mu\in(-\infty,\infty)$ and $K$ is a 2-dimensional complex-valued time function satisfying the Riccati equation \eqref{EDK} and we establish the exponential ergodicity of $Z$ using the so-called Foster-Lyapunov criteria, see \cite[page 535, Section 6]{Foster-Lyapunov}, namely, we prove that if $a_1\in(0,\infty)$, $a_2\in(\frac{\sigma_{12}^2}{2},\infty)$ and $b_{11},b_{22},\theta\in(0,\infty)$, then there exists $\delta\in(0,\infty)$ and $B\in(0,\infty)$ such that
	\begin{equation*}
	\underset{\vert g\vert\leq V+1}{\sup}\left\vert \mathbb{E}\left(g(Z_t)\vert Z_0=z_0\right)-\mathbb{E}(g(Z_{\infty}))\right\vert\leq B(V(z_0)+1))e^{-\delta t},
	\end{equation*}
	for all $t\in[0,\infty)$ and Borel measurable functions $g:\mathcal{D}\to(-\infty,\infty)$, where $(y_0,x_0)\in\mathcal{D}$ and $V$ is a chosen norm-like function on $\mathcal{D}$, for more details see Theorem \ref{ergodicity theorem}. We note that the stationarity result is similar to that of Jin et al. \cite{Jin2}, and our aim in this work is to elaborate on all the mathematical details related to our particular model. Thirdly, we study the statistical inferences, the consistency and the asymptotic behavior of the MLE and the CLSE of the drift parameters of the double Heston model in the subcritical case. Let us recall that for statistical estimations with continuous observations, we always suppose the diffusion parameters $\sigma_1$, $\sigma_2$, $\rho$ and $\bar{\rho}$ to be known. See, e.g., the arguments given in \cite[Section 5, page 14]{Dahbi}. Fourthly, we illustrate some of the obtained theoretical results using numerical simulations of the double Heston model. More precisely, we check the validity of the classification result and the asymptotic behavior of the obtained estimators in the subcritical case. We close the paper with an appendix, where we recall some important theorems. It is worth noting that little research exists on the double Heston model, and to our knowledge, the issues of ergodicity and parametric estimation have not been explored for this model before, which makes our work both new and original.\\
 
 The remainder of the paper is organized as follows: In the second section, we present all the principal notations and tools. In the third section, we classify the double Heston process into three cases based on the asymptotic behavior of its expectation. In section 4, we prove the existence of the stationary distribution associated to our process. In section 5, we establish the exponential ergodicity theorem for the model \eqref{model01}. In section 6 and 7,
we study, respectively, the MLE and the CLSE of the double Heston model drift parameters and their asymptotic behavior. In section 8, we present some numerical results. Finally, in the appendix, we prove a technical lemma and we recall some important limit theorems used in this work.
\section{Preliminary}
Let $\N,\;\N^*,\; \R, \;\mathbb{R}_{+},\; \mathbb{R}_{++}, \; \mathbb{R}_{-}, \; \mathbb{R}_{--}$ and $\mathbb{C}$
denote the sets of non-negative integers, positive integers, real numbers, non-negative real numbers, positive real numbers, non-positive real numbers, negative real numbers and complex numbers, respectively and let $\mathcal{D}=\R_{+}^2\times\R$, for $n \in \N^*.$ For $x,y\in\R$, we will use the notation $x\wedge y:=\min(x,y)$ and $x\vee y:=\max(x,y)$. 
For all $z\in\mathbb{C}$, \texttt{Re}($z$) denotes the real part of $z$ and \texttt{Im}($z$) denotes the imaginary part of $z$. Let us denote, for $p,q\in\N^*$,  by $\mathcal{M}_{p,q}$ the set of $p\times q$ real matrices, $\mathcal{M}_p$ the set of $p\times p$ real matrices, $\mathbf{I}_p$ the identity matrix in $\mathcal{M}_{p}$, $\textbf{0}_{p,q}$ the null matrix in $\mathcal{M}_{p,q}$, $\textbf{0}_{p}:=\textbf{0}_{p,1}$ and $\mathbf{1}_p\in\mathcal{M}_{p,1}$ is the 1-vector of size $p$. For $m,n\in\N^*$ and $x\in \R^{m+n}$, we write $I=\{1,\ldots,m\}$, $J=\{m+1,m+n\}$, $\mathbf{x}_I=(x_i)_{i\in I}$ and $\mathbf{x}_J=(x_j)_{j\in J}$. Throughout this paper, we use the notation 
$$A=\begin{bmatrix}
	A_{II}&A_{IJ}\\
	A_{JI}&A_{JJ}
\end{bmatrix}$$
for $A\in\mathcal{M}_{m+n}$ where $A_{II}=(a_{i,j})_{i,j\in{I}}$, $A_{IJ}=(a_{i,j})_{i\in{I},j\in{J}}$, $A_{JI}=(a_{i,j})_{i\in{J},j\in{I}}$ and $A_{JJ}=(a_{i,j})_{i,j\in{J}}$. 


For all matrices $A_1,\ldots,A_k$, $\text{diag}(A_1,\ldots,A_k)^{\top}$ denotes the block matrix containing the matrices $A_1,\ldots,A_k$ in its diagonal and for all matrix $\textbf{M}$,  $\norm{\textbf{M}}_1:=\max\limits_{j}\displaystyle\sum_i \vert \textbf{M}_{ij}\vert$ and $\norm{\textbf{M}}_2:=\sqrt{\max\text{eig}(\textbf{M}^{\top}\textbf{M})}$, with $\textbf{M}^{\top}$ the transpose matrix of $\textbf{M}$. In particular, for all vector $\textbf{x}=(x_1,\ldots,x_p)^{\top}\in\R^p$, we have $\norm{\textbf{x}}_1:=\vert {x}_1\vert+\ldots+\vert x_n\vert $ and $\norm{\mathbf{x}}_2:=\sqrt{ {x}_1^2+\ldots+x_n^2}$.
We denote by $\otimes$ the Kronecker product defined, for all matrices $\mathbf{A}\in\mathcal{M}_{p,q}$ and $\mathbf{B}$ with suitable dimensions,
$$
\mathbf{A}\otimes\mathbf{B}=\begin{bmatrix}
	a_{11}\mathbf{B}&  \cdots & a_{1q}\mathbf{B}\\
	\vdots&  \ddots &\vdots\\
	a_{p1}\mathbf{B}&  \cdots & a_{pq}\mathbf{B}
\end{bmatrix}.
$$We define the vec-operator applied on a matrix $\mathbf{A}$ denoted by $\text{vec}(\mathbf{A})$, which stacks its columns into a column vector, for more details, see, e.g., \cite{Horn_matrix} and \cite{matrix_cookbook}.

We use the notations $H_{\mathbf{x}}(f)$ and $\nabla_{\mathbf{x}}f$ for the Hessian matrix and the gradient column vector of the function $f$ with respect to ${\mathbf{x}}$. we denote by $\mathcal{C}^2(\mathcal{D},\R)$ the set of twice continuously differentiable real-valued functions on $\mathcal{D}$ and by $\mathcal{C}_c^2(\mathcal{D},\R)$ its subset of functions with compact support. We will denote the convergence in probability, in distribution and almost surely  by $\stackrel{\mathbb{P}}\longrightarrow$, $\stackrel{\mathcal{D}}{\longrightarrow}$, $\stackrel{a.s.}{\longrightarrow}$ respectively.
Let $(\Omega,\mathcal{F}, \mathbb{P})$ be a probability space endowed with the augmented filtration $(\mathcal{F}_t)_{t \in \R_{+}}$ corresponding to $(B_t)_{t\in\R_+}$ and a given initial value $Z_0=(Y_0,X_0)^{\top}$ being independent of $(B_t)_{t\in\R_+}$ such that $\mathbb{P}(Y_0\in\R_{++})=1$. We suppose that $(\mathcal{F}_t)_{t\in\R_+}$ satisfies the usual conditions, i.e., the filtration 
$(\mathcal{F}_{t})_{t \in \R_{+}}$ is right-continuous and $\mathcal{F}_{0}$ contains all the $\mathbb{P}$-null sets in $\mathcal{F}$. For all $t\in\R_+$, we use the notation  $\mathcal{F}_t^Y:=\sigma(Y_s;\,0\leq s\leq t)$ for the $\sigma$-algebra generated by $(Y_s)_{s\in[0,t]}$. 
\section{Classification result}
For all $t\in\R_+$, thanks to Itô's formula, the SDE $\eqref{model01}$ implies
			\begin{equation}
				Y^{(1)}_t=e^{-b_{11}t}Y_0^{(1)}+a_{1}\displaystyle\int_0^t e^{-b_{11
					}(t-s)} \mathrm{~d}s+\sigma_{11} \displaystyle\int_0^t e^{-b_{11}(t-s)} \sqrt{Y_s^{(1)}}\mathrm{~d}B_s^1,
				\label{Y1 expression}
			\end{equation}
		\begin{equation}
			Y_t^{(2)}=e^{-b_{22}t}Y_0^{(2)}+a_2 \displaystyle\int_0^t e^{-b_{22
				}(t-s)} \mathrm{~d}s-b_{21}\displaystyle\int_0^t e^{-b_{22}(t-s)}Y_s^{(1)}\mathrm{~d}s+\sigma_{12} \displaystyle\int_0^t e^{-b_{22}(t-s)} \sqrt{Y_s^{(2)}}\mathrm{~d}B_s^2
			\label{Y2 expression}
		\end{equation}
	and
{\begin{multline}
			X_t=e^{-\theta t} X_0+m\displaystyle\int_0^t e^{-\theta(t-s)} \mathrm{~d}s-\kappa_1\displaystyle\int_0^t e^{-\theta(t-s)} Y_s^{(1)}\mathrm{~d}s-\kappa_2\displaystyle\int_0^t e^{-\theta(t-s)} Y_s^{(2)}\mathrm{~d}s+\sigma_{21}\displaystyle\int_0^t e^{-\theta(t-s)} \sqrt{Y_s^{(1)}}\\\times\left(\rho_{11}\mathrm{~d}B_s^1+\sqrt{1-\rho_{11}^2}\mathrm{~d}W^1_s\right)+\sigma_{22} \displaystyle\int_0^t e^{-\theta(t-s)} \sqrt{Y_s^{(2)}}\left(\rho_{22}\mathrm{~d}B_s^2+\sqrt{1-\rho_{22}^2}\mathrm{~d}W^2_s\right).
\label{X expression}
	\end{multline}}
Next, we introduce a proposition about the asymptotic behavior of $\mathbb{E}(Z_t)$ as $t$ goes to infinity.
\begin{proposition}
    Let $(Z_t)_{t\in\R_{+}}$ be the unique strong solution of $\eqref{model01}$ satisfying $\mathbb{P}(Y_0\in\R_{++}^2)=1$ and $\mathbb{E}\left(\norm{Z_0}_1\right)<\infty$. Then, for all $t\in\R_+$, $Z_t$ is integrable and $\mathbb{E}(Z_t)$ converges if $b_{11},b_{22},\theta\in\R_{++}$, $\mathbb{E}(Z_t)$ has a polynomial expansion if one of the parameters $b_{11},b_{22}$ and $\theta$ equals zero and the rest are non-negative and $\mathbb{E}(Z_t)$ has an exponential expansion if one of the parameters $b_{11},b_{22}$ and $\theta$ is negative.
    \label{classification}
\end{proposition}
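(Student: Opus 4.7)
The natural strategy is to exploit the explicit integral representations \eqref{Y1 expression}, \eqref{Y2 expression}, \eqref{X expression} and the triangular structure of the drift. Indeed, if we write $u(t):=\mathbb{E}(Z_t)=(\mathbb{E}(Y_t^{(1)}),\mathbb{E}(Y_t^{(2)}),\mathbb{E}(X_t))^{\top}$, the drift of \eqref{model01} is governed by the lower-triangular matrix
\[
M=\begin{bmatrix} b_{11} & 0 & 0 \\ b_{21} & b_{22} & 0 \\ \kappa_{1} & \kappa_{2} & \theta \end{bmatrix},
\]
whose spectrum is exactly $\{b_{11},b_{22},\theta\}$. Once integrability of $Z_t$ is settled, the martingale parts in \eqref{Y1 expression}--\eqref{X expression} vanish in expectation and $u$ satisfies the linear ODE $u'(t)=-Mu(t)+(a_1,a_2,m)^{\top}$, so that $u(t)=e^{-Mt}u(0)+\int_0^t e^{-M(t-s)}(a_1,a_2,m)^{\top}\mathrm{d}s$. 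The asymptotic behaviour of $u$ is then dictated by the sign of the eigenvalues of $M$, which is precisely the trichotomy announced in the statement.

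My first step is to establish integrability together with the scalar recursion. A standard truncation by the stopping times $\tau_n:=\inf\{t\ge0:Y_t^{(1)}\vee Y_t^{(2)}\vee|X_t|\ge n\}$, followed by Fatou's lemma and Gronwall, yields $\mathbb{E}(Y_t^{(1)})<\infty$, after which one can bootstrap to $Y^{(2)}$ and $X$ using \eqref{Y2 expression} and \eqref{X expression}. With integrability in hand the It\^o integrals are genuine martingales and taking expectations in \eqref{Y1 expression} gives the closed form $\mathbb{E}(Y_t^{(1)})=e^{-b_{11}t}\mathbb{E}(Y_0^{(1)})+a_1 f_{b_{11}}(t)$, where $f_c(t)=\int_0^t e^{-c(t-s)}\mathrm{d}s$ equals $t$ if $c=0$ and $(1-e^{-ct})/c$ otherwise. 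Substituting this into \eqref{Y2 expression} and the result into \eqref{X expression} gives explicit sums of terms of the form $C\,t^{k}e^{-\alpha t}$ with $\alpha\in\{b_{11},b_{22},\theta\}$ and $k\in\{0,1,2\}$.

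The classification then follows by inspection of these closed forms. In the subcritical regime $b_{11},b_{22},\theta\in\R_{++}$ every exponential factor decays and $u(t)\to M^{-1}(a_1,a_2,m)^{\top}$. If exactly one of $b_{11},b_{22},\theta$ vanishes and the others remain in $\R_+$, the corresponding integral $f_c$ becomes affine in $t$ and, after one or two further integrations against the non-negative parameters, produces a genuine polynomial growth of degree at most $3$; here it is useful to organise the discussion component by component, noting that if $b_{11}=0$ then $\mathbb{E}(Y_t^{(1)})$ grows linearly, which in turn can force $\mathbb{E}(Y_t^{(2)})$ to grow quadratically when $b_{22}=0$ as well, and so on down the triangular structure. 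Finally, if any of $b_{11},b_{22},\theta$ is strictly negative, the corresponding eigenvalue of $-M$ is strictly positive and the explicit expression contains a term $e^{|c|t}$ with a non-zero multiplicative constant, yielding exponential expansion.

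The only genuine difficulty is the bookkeeping in the critical case: one must check, in every sub-case of the set $\{b_{11}=0\}\cup\{b_{22}=0\}\cup\{\theta=0\}$, that the polynomial in $t$ appearing in $\mathbb{E}(Z_t)$ is truly of positive degree and not accidentally cancelled by the initial-value contribution. This is handled by observing that the leading coefficient in each component is a positive combination of $a_1,a_2,-b_{21}$ and $1$ coming from $f_0(t)=t$, none of which can vanish under the standing sign constraints on the parameters; the computation is routine but must be written down carefully to cover every sub-case separately.
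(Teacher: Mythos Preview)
Your approach is essentially the same as the paper's: take expectations in the explicit integral representations \eqref{Y1 expression}--\eqref{X expression}, exploit the triangular drift structure to compute $\mathbb{E}(Y_t^{(1)})$, then $\mathbb{E}(Y_t^{(2)})$, then $\mathbb{E}(X_t)$ recursively, and classify the asymptotics case by case on the signs of $b_{11},b_{22},\theta$. Your matrix/ODE framing via $M$ is a cleaner way to organise the same computation; the paper instead works component by component and uses Kronecker's and Ces\`aro's lemmas to read off the growth rates of the convolutions $\int_0^t e^{-c(t-s)}\mathbb{E}(Y_s^{(i)})\,\mathrm{d}s$, where you plan to write down closed forms of type $t^k e^{-\alpha t}$. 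Both routes lead to the same trichotomy with the same case-by-case bookkeeping.

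One small inaccuracy: in your last paragraph you assert that in the critical case the leading polynomial coefficient is a positive combination of $a_1,a_2,-b_{21}$ and ``$1$'', and that ``none of which can vanish under the standing sign constraints''. In fact $a_1,a_2\in[0,\infty)$ and $b_{21}\in(-\infty,0]$ are all allowed to vanish, so the polynomial can degenerate to lower degree (e.g.\ if $b_{11}=0$ and $a_1=0$ then $\mathbb{E}(Y_t^{(1)})$ is constant). This does not break the statement---``polynomial expansion'' still holds, just with a smaller degree---but your justification for non-cancellation is not quite right as written. The paper sidesteps this by simply exhibiting a rate function $v_i(t)$ (a power of $t$) such that $v_i(t)\mathbb{E}(Z_t^{(i)})$ converges, without claiming the limit is non-zero.
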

\begin{proof}
For $t\in\R_+$, in order to check the square integrability of $Z_t$, it is sufficient to observe that for all $i\in\{1,2\}$, the process $\left(\displaystyle\int_0^t \sqrt{Y_s^{(i)}} e^{-(t-s) \gamma_i} \mathrm{~d}\bar{B}^i_s\right)_{t\in\R_+}$
is a square integrable martingale, where $\gamma_i\in\{b_{ii},\theta\}$ and $\bar{B}^i_s\in\{B^i_s,W^i_s\}$. Hence, by taking the expectation in the two sides of equations \eqref{Y1 expression} thanks to Fubini-Tonelli, it is easy to check that
\begin{equation}
\begin{split}
\mathbb{E}\left(Y_{t}^{(1)}\right)&= \mathbb{E}\left(Y_{0}^{(1)}\right)e^{-b_{11} t}+a_1 \int_{0}^{t} e^{-b_{11} u} \mathrm{~d} u= \begin{cases}\frac{a_1}{b_{11}}+\left(\mathbb{E}\left(Y_{0}^{(1)}\right)-\frac{a_1}{b_{11}}\right) e^{-b_{11} t}, &b_{11} \neq 0,\\
\mathbb{E}\left(Y_{0}^{(1)}\right)+a_1 t, &b_{11}=0,\end{cases}.
\end{split}
\label{E(Y^1_t)}
\end{equation}
Then, there exists a positive function $v_1$ defined on $\R_{++}$ such that $v_1(t)\mathbb{E}\left(Y_t^{(1)}\right)$ converges. Namely, $v_1(t)=1$, if $b_{11}\in\R_{++}$, $v_1(t)=\frac{1}{t}$, if $b_{11}=0$ and $v_1(t)=e^{b_{11}t}$, if $b_{11}\in\R_{--}$. 

Now, concerning the second component, let us compute the speed of convergence of $\mathbb{E}(Y_t^{(2)})$, a positive function $v_2(t)$ such that $v_2(t)\mathbb{E}(Y_t^{(2)})$ converges. Thanks to relation \eqref{Y2 expression}, we obtain
\begin{align}
\mathbb{E}\left(Y_{t}^{(2)}\right)&= \mathbb{E}\left(Y_{0}^{(2)}\right)e^{-b_{22} t}+a_2 \int_{0}^{t} e^{-b_{22} u} \mathrm{~d} u-b_{21}\int_0^te^{-b_{22}(t-s)}\mathbb{E}\left(Y_{s}^{(1)}\right)\mathrm{d}s.
\label{E(Y^2_t)}
\end{align}
We distinguish three cases : 

$\bullet$ $b_{22}\in\R_{++}$: thanks to Kronecker Lemma, we obtain 
\begin{equation}
    \displaystyle\int_0^t e^{b_{22}s}\mathbb{E}\left(Y_s^{(1)}\right)\mathrm{d}s\sim \displaystyle\int_0^t \dfrac{e^{b_{22}s}}{v_1(s)}\mathrm{d}s,
\label{equiv Y^2 with Kronecker Lemma}
\end{equation}
Consequently, by combining \eqref{E(Y^2_t)} with \eqref{equiv Y^2 with Kronecker Lemma}, we distinguish three cases : if $b_{11}\in\R_{++}$, then $v_1(t)=1$ and $\mathbb{E}(Y_t^{(2)})$ converges, if $b_{11}=0$, then $v_1(t)=\dfrac{1}{t}$ and $\dfrac{1}{t} \mathbb{E}(Y_t^{(2)})$ converges, and if $b_{11}\in\R_{--}$, then $v_1(t)=e^{b_{11}t}$ and $e^{b_{11}t} \mathbb{E}(Y_t^{(2)})$ converges.

$\bullet$ $b_{22}=0$: the relation \eqref{E(Y^2_t)} becomes
\begin{align}
\mathbb{E}\left(Y_{t}^{(2)}\right)&= \mathbb{E}\left(Y_{0}^{(2)}\right)+a_2 t-b_{21}\int_0^t\mathbb{E}\left(Y_{s}^{(1)}\right)\mathrm{d}s
\end{align}
and we distinguish three cases : if $b_{11}\in\R_{++}$, then Cesaro lemma yields $\dfrac{1}{t}\mathbb{E}(Y_t^{(2)})$ converges, if $b_{11}\in\R_-$, by applying Kronecker Lemma, we have $$\displaystyle\int_0^t \mathbb{E}(Y_s^{(1)})\mathrm{d}s\sim\displaystyle\int_0^t \dfrac{1}{v_1(s)}\mathrm{d}s$$ and we obtain if $b_{11}=0$, then $\dfrac{1}{t^{2}}\mathbb{E}(Y_t^{(2)})$ converges and if $b_{11}\in\R_{--}$, then $e^{b_{11}t}\mathbb{E}(Y_t^{(2)})$ converges.

$\bullet$ $b_{22}\in\R_{--}$: the relation \eqref{E(Y^2_t)} gives
\begin{align}
e^{b_{22}t}\mathbb{E}\left(Y_{t}^{(2)}\right)&= \mathbb{E}\left(Y_{0}^{(2)}\right)-\dfrac{a_2}{b_{22}}\left(1-e^{b_{22}t}\right)-b_{21}\int_0^te^{b_{22}s}\mathbb{E}\left(Y_{s}^{(1)}\right)\mathrm{d}s.
\end{align}
Hence, if $b_{11}\in\R_{+}$, then $e^{b_{22}s}\mathbb{E}(Y_s^{(1)})$ is integrable and $e^{b_{22}t}\mathbb{E}(Y_t^{(2)})$ converges and if $b_{11}\in\R_{--}$, then three cases appear: if $b_{11}=b_{22}$, then $\dfrac{e^{b_{22}t}}{t}\mathbb{E}(Y_{t}^{(2)})$ converges, if $b_{11}>b_{22}$, then $e^{b_{22}t}\mathbb{E}(Y_{t}^{(2)})$ converges and if $b_{11}<b_{22}$, then $e^{b_{11}t}\mathbb{E}(Y_{t}^{(2)})$ converges.

Now, concerning the third component, let us compute the rate of convergence of $\mathbb{E}(X_t)$, a positive function $v_3(t)$ such that $v_3(t)\mathbb{E}(X_t)$ converges. By relation \eqref{X expression}, we get
\begin{equation}
\mathbb{E}\left(X_t\right)=e^{-\theta t} \mathbb{E}\left(X_0\right)+m\displaystyle\int_0^t e^{-\theta(t-s)} \mathrm{~d}s-\kappa_1\displaystyle\int_0^t e^{-\theta(t-s)} \mathbb{E}\left(Y_s^{(1)}\right)\mathrm{~d}s-\kappa_2\displaystyle\int_0^t e^{-\theta(t-s)} \mathbb{E}\left(Y_s^{(2)}\right)\mathrm{~d}s.
\label{E(X_t)}
\end{equation}
Similarly to the discussion on the second component, we distinguish three cases:

$\bullet$ $\theta\in\R_{++}$: we use Kronecker Lemma to obtain an equivalent to the third and the fourth terms on the right-hand side of relation \eqref{E(X_t)} to distinguish many cases. By simple calculations, as done for the second component, we obtain: if $v_1(t)=v_2(t)=1$, then $v_3(t)=1$, if $(v_1(t),v_2(t))\in\left\lbrace\left(1,\dfrac{1}{t}\right),\left(\dfrac{1}{t},1\right)\right\rbrace$, then $v_3(t)=\dfrac{1}{t^2}$, if $v_1(t)=v_2(t)=\dfrac{1}{t}$, then $v_3(t)=\dfrac{1}{t^3}$ and if $v_1(t)$ or $v_2(t)$ are exponential, then $v_3(t)=v_1(t)\vee v_2(t)$.

$\bullet$ $\theta=0$: Relation \eqref{E(X_t)} becomes
\begin{align}
\mathbb{E}\left(X_t\right)=\mathbb{E}\left(X_0\right)+m t-\kappa_1\displaystyle\int_0^t  \mathbb{E}\left(Y_s^{(1)}\right)\mathrm{~d}s-\kappa_2\displaystyle\int_0^t  \mathbb{E}\left(Y_s^{(2)}\right)\mathrm{~d}s.
\end{align}
If $v_1(t)=v_2(t)=1$, then $v_3(t)=\dfrac{1}{t}$, if $(v_1(t),v_2(t))\in\left\lbrace\left(1,\dfrac{1}{t}\right),\left(\dfrac{1}{t},1\right)\right\rbrace$, then $v_3(t)=\dfrac{1}{t^2}$, if $v_1(t)=v_2(t)=\dfrac{1}{t}$, then $v_3(t)=\dfrac{1}{t^3}$ and if $v_1(t)$ or $v_2(t)$ are exponential, then $v_3(t)=v_1(t)\vee v_2(t)$ and it is exponentially explosive.

$\bullet$ $\theta\in\R_{--}$: the relation \eqref{E(X_t)} gives 
\begin{align}
e^{\theta t}\mathbb{E}\left(X_t\right)= \mathbb{E}\left(X_0\right)-\dfrac{m}{\theta}\left(1-e^{\theta t}\right)-\kappa_1\displaystyle\int_0^t e^{\theta s} \mathbb{E}\left(Y_s^{(1)}\right)\mathrm{~d}s-\kappa_2\displaystyle\int_0^t e^{\theta s} \mathbb{E}\left(Y_s^{(2)}\right)\mathrm{~d}s.
\end{align}
As in the analysis done for the second component when $b_{22}\in\R_{--}$, we prove that $v_3(t)$ is exponentially explosive.
\end{proof}
Now, based on the asymptotic behavior of $\mathbb{E}(Z_t)$ as $t\to\infty$, we introduce in what follows the classification of the double Heston model defined by the SDE $\eqref{model01}$.
\begin{definition}
Let $(Z_t)_{t\in\R_{+}}$ be the unique strong solution of $\eqref{model01}$ satisfying $\mathbb{P}(Y_0\in\R_{++}^2)=1$ and $\mathbb{E}\left(\norm{Z_0}_1\right)<\infty$. We call $(Z_t)_{t\in\R_+}$ subcritical if $b_{11},b_{22},\theta\in\R_{++}$, i.e., when $\mathbb{E}(Z_t)$ converges as $t\to\infty$, critical if one of the parameters $b_{11},b_{22}$ and $\theta$ equals zero and the rest are non negative, i.e., when $\mathbb{E}(Z_t)$ has a polynomial expansion and supercritical if one of the parameters $b_{11},b_{22}$ and $\theta$ is negative, i.e., when $\mathbb{E}(Z_t)$ has an exponential expansion.
\end{definition}
In what follows, we consider the double Heston model $\eqref{model01}$ in the subcritical case: $b_{11},b_{22},\theta\in\R_{++}$.
\section{Stationarity}
The following proposition is about the stationarity of the double Heston process given by the SDE \eqref{model01}. Note that this proposition is similar to the result of Jin et al. in \cite{Jin2}, where they proved the existence of limiting distribution for general affine processes. The advantage of our work is to explicit all the mathematical expressions involved in our particular affine model.
\begin{proposition}
					Let us consider the affine model $\eqref{model01}$ with $a\in\R^2_{+}$, $b_{11},b_{22}\in\R_{++}$, $b_{21}\in\R_-$, $m\in\R$, $\kappa\in\R^2$, $\theta\in\R_{++}$, $Y^{(1)}_0=y^{(1)}_0\in\R_+$, $Y^{(2)}_0=y^{(2)}_0\in\R_+$ and $X_0=0$. Then
					\begin{enumerate}[label=\arabic*)]
						\item $Z_t=(Y_t,X_t)\stackrel{\mathcal{D}}{\longrightarrow}Z_{\infty}=(Y_\infty,X_\infty)$ as $t\to\infty$, and the distribution of $Z_\infty$ is given by
						\begin{equation}
							\mathbb{E}\left(e^{\nu^{\top} Z_\infty}\right)=\mathbb{E}\left(e^{-\lambda^{\top} Y_\infty +i \mu X_\infty}\right)=\exp\left(a^{\top}\displaystyle\int_0^{\infty} K_s\left(-\lambda,\mu \right)\mathrm{d}s+\dfrac{i\mu m}{\theta}\right),
							\label{loi limite}
						\end{equation}
						for $\nu=(-\lambda,i\mu)\in\mathcal{U}_1\times \mathcal{U}_2$, where $\mathcal{U}_1:=\lbrace z\in \mathbb{C}^2 : \texttt{Re}(z)\in\R_{-}^2\rbrace$ and $\mathcal{U}_2:=\lbrace z\in \mathbb{C} : \texttt{Re}(z)=0\rbrace$ and the function $t\mapsto K_t(u):=(K_t^{(1)}(u),K_t^{(2)}(u))$, with $u=(u_1,u_2,u_3) \in \mathcal{U}_1\times\R$, is the unique solution of the following Riccati system						\begin{equation}
							\begin{cases}
								\begin{aligned}
									\dfrac{\partial K_t^{(1)}}{\partial t}(u)&=\frac{{\sigma_{11}}^2}{2}({K}^{(1)}_t(u))^2-b_{11}{K}^{(1)}_t(u)-b_{21}{K}^{(2)}_t(u)-i u_3\kappa_1 e^{-\theta t}-\dfrac{1}{2}u_3^2{\sigma_{21}}^2 e^{-2\theta t},\\
									\dfrac{\partial K_t^{(2)}}{\partial t}(u)&=\frac{{\sigma_{12}}^2}{2}(K^{(2)}_t(u))^2-b_{22}K^{(2)}_t(u)-i u_3\kappa_2 e^{-\theta t}-\dfrac{1}{2}u_3^2{\sigma_{22}}^2 e^{-2\theta t},\\
								    K_0(u)&=\left(\texttt{Re}(u_1),\texttt{Re}(u_2)\right)^\top.
								\end{aligned}
							\end{cases}
							\label{EDK}
						\end{equation} 
						\label{i Stationarity}
						\item In addition, if $Z_0$ is a random initial value independent of $(B_t,W_t)_{t\in\R_+}$ which has the same distribution as $Z_{\infty}$ given by $\eqref{loi limite}$, then $(Z_t)_{t\in\R_+}$ is strictly stationary.
					\end{enumerate}
     \label{stationarity theorem AD21}
				\end{proposition}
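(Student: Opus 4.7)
The plan is to exploit the affine structure of the model. Because \eqref{model01} is an affine diffusion on $\mathcal{D} = \R_{+}^{2}\times\R$, the results of Duffie--Filipović--Schachermayer give the exponential-affine representation
\[
\mathbb{E}_{z_{0}}\left(e^{\nu^{\top}Z_{t}}\right)=\exp\left(\phi_{t}(\nu)+\psi_{t}(\nu)^{\top}z_{0}\right),
\]
where $\phi_{t}$ and $\psi_{t}$ solve the generalized Riccati ODE determined by the drift and diffusion characteristics of $\eqref{model01}$. First I would specialise this to our model. Since $X$ enters the drift only linearly and its diffusion coefficient does not depend on $X$, the $X$-coordinate of $\psi_{t}$ decouples and is explicitly $\psi^{X}_{t}(\nu)=i\mu\, e^{-\theta t}$, with the constant contribution $\dfrac{i\mu m}{\theta}(1-e^{-\theta t})$ from the drift constant $m$. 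The two $Y$-coordinates, written as $K_{t}=(K_{t}^{(1)},K_{t}^{(2)})$, absorb the quadratic diffusion terms $\tfrac12\sigma_{1i}^{2}y_{i}$ coming from $Y^{(i)}$ and the coupling terms coming from $X$ through $S(Y_{t})$ (which produce the exponentially-decaying forcing $-iu_{3}\kappa_{i}e^{-\theta t}-\tfrac12 u_{3}^{2}\sigma_{2i}^{2}e^{-2\theta t}$) and the drift coupling from $b_{21}$; this yields exactly the system \eqref{EDK}.

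Next I would establish global existence and the qualitative behaviour of $K_{t}$. Because $\nu\in\mathcal{U}_{1}\times\mathcal{U}_{2}$ corresponds to a bona fide Laplace--Fourier argument, the real parts $\Re K^{(i)}_{t}$ stay non-positive for all $t\geq 0$ (this is standard for affine Riccati equations with admissible initial data and prevents blow-up of the quadratic term). I would then prove that $K^{(2)}_{t}\to 0$ as $t\to\infty$: the equation for $K^{(2)}$ is a scalar Riccati with linear rate $-b_{22}<0$ and exponentially decaying forcing, so a standard comparison/variation-of-constants argument gives exponential decay, more precisely $|K^{(2)}_{t}|\leq C e^{-\delta_{2} t}$ for some $\delta_{2}>0$. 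Feeding this into the equation for $K^{(1)}$, the term $-b_{21}K^{(2)}_{t}$ becomes an exponentially decaying forcing, and the same argument using $b_{11}>0$ yields $|K^{(1)}_{t}|\leq C e^{-\delta_{1} t}$. In particular $s\mapsto K_{s}(-\lambda,\mu)$ is integrable on $\R_{+}$, so $\int_{0}^{\infty}K_{s}(-\lambda,\mu)\,\mathrm{d}s$ is a well-defined complex vector.

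With these ingredients in hand, $\psi_{t}(\nu)\to 0$ and $\phi_{t}(\nu)\to a^{\top}\int_{0}^{\infty}K_{s}(-\lambda,\mu)\,\mathrm{d}s+\dfrac{i\mu m}{\theta}$ as $t\to\infty$. Hence $\mathbb{E}(e^{\nu^{\top}Z_{t}})$ converges pointwise on $\mathcal{U}_{1}\times\mathcal{U}_{2}$ to the right-hand side of \eqref{loi limite}, which is continuous at $\nu=0$ (the exponential decay bounds allow dominated convergence in the integral). By Lévy's continuity theorem this proves $Z_{t}\xrightarrow{\mathcal{D}}Z_{\infty}$ and identifies the Laplace--Fourier transform of $Z_{\infty}$, giving point $1)$.

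For point $2)$, I would invoke the Markov property together with the semigroup (flow) identity of the Riccati system: for any $s,t\geq 0$,
\[
\phi_{s+t}(\nu)=\phi_{t}(\nu)+\phi_{s}(\psi_{t}(\nu)),\qquad \psi_{s+t}(\nu)=\psi_{s}(\psi_{t}(\nu)),
\]
which is a standard consequence of the uniqueness of solutions to the Riccati equation. Conditioning on $\mathcal{F}_{s}$ and using the affine formula gives
\[
\mathbb{E}\bigl(e^{\nu^{\top}Z_{s+t}}\bigr)=\mathbb{E}\bigl(e^{\phi_{t}(\nu)+\psi_{t}(\nu)^{\top}Z_{s}}\bigr),
\]
so applying the limit distribution identity to both sides (valid for every $s$, in particular letting $s\to\infty$) shows that if $Z_{0}\stackrel{\mathcal{D}}{=}Z_{\infty}$ then $Z_{t}\stackrel{\mathcal{D}}{=}Z_{\infty}$ for every $t$, hence the distribution of $Z_{\infty}$ is invariant; combining with the Markov property gives strict stationarity of $(Z_{t})_{t\in\R_{+}}$. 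The main technical obstacle is the rigorous derivation of the decay estimates for $K^{(1)}$ and $K^{(2)}$ on the complex admissible domain while ensuring the quadratic term does not destroy exponential contraction; all other steps are either standard affine-process machinery or follow from these estimates by dominated convergence.
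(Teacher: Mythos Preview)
Your proposal is correct and follows the same overall strategy as the paper: exploit the affine structure to get Riccati equations, prove exponential decay of the $Y$-exponents by ODE comparison arguments (first for $K^{(2)}$, then feeding into $K^{(1)}$), conclude via the continuity theorem, and deduce strict stationarity from the Riccati flow/semigroup identity. The paper carries out exactly the decay estimate you flag as the main technical point, by splitting $K^{(j)}$ into real and imaginary parts $v^{(j)},w^{(j)}$ and applying comparison and variation-of-constants to each.

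There is one genuine methodological difference worth noting. You work directly with the three-dimensional affine process $Z=(Y,X)$ and its Riccati system, observing that the $X$-exponent decouples as $i\mu e^{-\theta t}$. The paper instead introduces an auxiliary four-dimensional affine process $\tilde{Z}=(Y,U,V)$, where $U_{t}=\kappa^{\top}\int_{0}^{t}e^{-\theta(t-s)}Y_{s}\,\mathrm{d}s$ and $V_{t}=\sum_{i}\sigma_{2i}^{2}\int_{0}^{t}e^{-2\theta(t-s)}Y^{(i)}_{s}\,\mathrm{d}s$, and uses the conditional Gaussianity of $X$ given $(Y_{s})_{s\le t}$ to reduce $\mathbb{E}(e^{-\lambda^{\top}Y_{t}+i\mu X_{t}})$ to a Laplace transform in $(Y,U,V)$. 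This detour buys them a Riccati system in which the forcing terms $u_{3}e^{-\theta t}$ and $u_{4}e^{-2\theta t}$ appear with independent real parameters $u_{3},u_{4}$ rather than the constrained pair $(u_{3},-\tfrac12 u_{3}^{2})$; only at the very end do they specialise. Your direct route is shorter and conceptually cleaner, while the paper's auxiliary-process approach makes the structure of the forcing slightly more transparent and keeps the Riccati analysis on a larger parameter domain. Both lead to the same decay bounds and the same semigroup argument for part~2).
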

				
				\begin{proof}
					\begin{enumerate}[label=\arabic*)]
						\item The proof of the first assertion follows four main steps:\\
						\textbf{Step 1}:
					The first step is devoted to convert the asymptotic study for a triplet process $\tilde{Z}=(Y,U,V)$ instead of $Z=(Y,X)$, where, for all $t\in\R_+$, $$ U_t:=\kappa^{\top}\displaystyle\int_0^t e^{-\theta(t-s)} Y_s\mathrm{~d}s\ \text{ and }\ V_t:={\sigma_{21}}^2 \displaystyle\int_0^t e^{-2\theta(t-s)}Y_s^{(1)}\mathrm{~d}s+{\sigma_{22}}^2 \displaystyle\int_0^t e^{-2\theta(t-s)}Y_s^{(2)}\mathrm{~d}s$$
					and to write, by the same arguments used in the proof of Theorem 3.1 in \cite{Dahbi}, its Fourier-Laplace transform
						\begin{equation}
							\mathbb{E}\left(e^{-\lambda^{\top} Y_t +i \mu X_t}\right)=e^{i\mu e^{-\theta t}x_0+ \frac{i\mu m}{\theta}-\frac{i\mu m}{\theta}e^{-\theta t}}\ \mathbb{E}\left(e^{-\lambda^{\top} Y_t-i\mu U_t-\frac{1}{2}\mu^{2}V_t}\right)
							\label{(Y X) to (Y U V)}
						\end{equation}
						Note that the process $\tilde{Z}=(Y,U,V)^\top$ satisfies the following SDE
						\begin{equation*}
							\begin{aligned}
								\begin{cases}
									\mathrm{d}Y_t &=(a-b Y_t) \mathrm{d}t+ diag(\sigma_{1})S(Y_t)\mathrm{~d}B_t,\\
									\mathrm{d}U_t &=(\kappa^{\top}Y_t-\theta U_t) \mathrm{d}t,\\
									\mathrm{d}V_t &=\left({\sigma_{2}}^{	\top}\text{diag}(\sigma_{2}) Y_t-2 \theta V_t \right) \mathrm{d}t,\\
                                    \tilde{Z}_0&=(y_0^{(1)},y_0^{(2)},0,0)^\top,
								\end{cases}
							\end{aligned}
						\end{equation*}
					 which is affine, see e.g. Filipovic in \cite[Theorem 3.2]{Filipovic}. Then, by Definition 2.1 in \cite{Filipovic}, $\tilde{Z}$ satisfies for all $T\in\R_+$, $ 0\leq t\leq T$ and $u=(u_1,u_2,u_3,u_4)\in \mathcal{U}:=\mathcal{U}_1\times \mathcal{U}_2 \times \R_-$, the following relation
						\begin{equation}
							\begin{aligned}
								\mathbb{E}\left(e^{u^{\top}\tilde{Z}_T}\mid \mathcal{F}_t\right)=\mathbb{E}\left(e^{(u_1,u_2) Y_T +u_3 U_T +u_4 V_T}\mid \mathcal{F}_t\right)=\exp\left(\phi(T-t,u)+\psi(T-t,u)^{\top}\tilde{Z}_t\right),
							\end{aligned}
							\label{Laplace Y,U,V}
						\end{equation}
						where $\psi(t,u)=(\psi_1(t,u),\psi_2(t,u),\psi_3(t,u),\psi_4(t,u))^{\top}$ is solution to
						\begin{equation}
							\begin{cases}								\dfrac{\partial\psi_1}{\partial t}(t,u)&=\frac{{\sigma_{11}}^2}{2}\psi_1^2(t,u)-b_{11}\psi_1(t,u)-b_{21}\psi_2(t,u)+\kappa_1 \psi_3(t,u)+{\sigma_{21}}^2\psi_4(t,u),\\[5pt]
								\dfrac{\partial\psi_2}{\partial t}(t,u)&=\frac{{\sigma_{12}}^2}{2}\psi_2^2(t,u)-b_{22}\psi_2(t,u)+\kappa_2 \psi_3(t,u)+{\sigma_{22}}^2\psi_4(t,u),\\[5pt]
									\dfrac{\partial\psi_3}{\partial t}(t,u)&=-\theta \psi_3(t,u),\\[5pt]
										\dfrac{\partial\psi_4}{\partial t}(t,u)&=-2\theta \psi_4(t,u),\\[5pt]
								\psi(0,u)&=u,
							\end{cases}
							\label{psi10}
						\end{equation}
					 and
						\begin{equation}
							\begin{aligned}
								\phi(t,u)=a_1\displaystyle\int_0^t \psi_1(s,u)\mathrm{~d}s+a_2\displaystyle\int_0^t \psi_2(s,u)\mathrm{~d}s.
							\end{aligned}
							\label{phi}
						\end{equation}
                        From equation $\eqref{psi10}$, we can deduce that $\psi_3(t,u)=u_3e^{- \theta\, t}$ and $\psi_4(t,u)=u_4e^{-2 \theta\, t}$. Next, we combine $\eqref{Laplace Y,U,V}$ with $\eqref{phi}$ to obtain
						\begin{equation}
								\mathbb{E}\left(e^{u^{\top}\tilde{Z}_t}\right)=\exp\left(a_1\displaystyle\int_0^t \psi_1(s,u)\mathrm{~d}s+a_2\displaystyle\int_0^t \psi_2(s,u)\mathrm{~d}s+y_0^{(1)} \psi_1(t,u)+y_0^{(2)} \psi_2(t,u)\right).
							\label{transfo}
						\end{equation}
						\textbf{Step 2}: Now, we prove that if $(u_1,u_2)\in\mathcal{U}_1$, we have $\left(\psi_1(t,u),\psi_2(t,u)\right)\in\mathcal{U}_1$, for all $t\in\R_+$ and $(u_3,u_4)\in \mathcal{U}_2\times \R_-$. Indeed, using the fact that  $u_4 V_t\in\R_{-}$, for all $t\in\R_+$, we deduce that 
						\begin{equation}
							\begin{aligned}
								\left\vert\mathbb{E}\left(e^{(u_1,u_2) Y_t + u_2 U_t + u_3 V_t}\right)\right\vert\leq \mathbb{E}\left(e^{\texttt{Re}((u_1,u_2)) Y_t+u_3 V_t}\right)\leq 1,
							\end{aligned}
							\label{E<1}
						\end{equation}
						for all $t\in\R_+$ and $u\in \mathcal{U}$. Consequently, combining the equations $\eqref{transfo}$ and $\eqref{E<1}$, we obtain
							\begin{equation*}
								\exp \left( a_1\,\texttt{Re}\left(\int_0^t \psi_1(s,u)\mathrm{~d}s\right)+ a_2\,\texttt{Re}\left(\int_0^t \psi_2(s,u)\mathrm{~d}s\right)+y_0^{(1)}\,\texttt{Re}\left(\psi_1(t,u)\right)+y_0^{(2)}\,\texttt{Re}\left(\psi_2(t,u)\right)\right)\leq 1.
							\end{equation*}
						Now, looking to the ordinary differential equation $\eqref{psi10}$, it's obvious that their solutions $\psi_1$ and $\psi_2$ do not depend on the parameters $a_1,a_2,y_0^{(1)}$ and $y_0^{(2)}$. Therefore, we are allowed to choose at first $a_1=a_2=y_0^{(2)}=0$ and $y_0^{(1)}\in\R_{++}$ then $a_1=a_2=y_0^{(1)}=0$ and $y_0^{(2)}\in\R_{++}$. Consequently, we deduce that, for all $t\in\R_+$ and $(u_3,u_4)\in \mathcal{U}_2\times \R_-$,  if $(u_1,u_2)\in\mathcal{U}_1$, we have $\left(\psi_1(t,u),\psi_2(t,u)\right)\in\mathcal{U}_1$.
						
						\textbf{Step 3}: Now, for $\mathcal{U}'=\mathcal{U}_1\times \R \times \R_-$, we focus on the construction of an upper-bound of the modulus of the functions  $$\R_+\times \mathcal{U}' \ni (t,u_1,u_2,u_3,u_4)\mapsto \tilde{K}_t^{(1)}(u_1,u_2,u_3,u_4):=\psi_1(t,(u_1,u_2,-i u_3,u_4))$$ and 
						$$\R_+\times \mathcal{U}' \ni (t,u_1,u_2,u_3,u_4)\mapsto \tilde{K}_t^{(2)}(u_1,u_2,u_3,u_4):=\psi_2(t,(u_1,u_2,-i u_3,u_4)).$$
Note that, for all $u\in\mathcal{U}'$, the above functions satisfy
						\begin{equation}
							\begin{aligned}
								\dfrac{\partial \tilde{K}_t^{(1)}}{\partial t}(u)&=\dfrac{\partial \psi_1}{\partial t}(t,(u_1,u_2,-iu_3,u_4))\\
								&=\frac{{\sigma_{11}}^2}{2}(\tilde{K}^{(1)}_t(u))^2-b_{11}\tilde{K}^{(1)}_t(u)-b_{21}\tilde{K}^{(2)}_t(u)-i u_3\kappa_1 e^{-\theta t}+u_4{\sigma_{21}}^2 e^{-2\theta t},\end{aligned}
							\label{K tilde EDO }
						\end{equation}
						and
						\begin{equation}
							\begin{aligned}
								\dfrac{\partial \tilde{K}_t^{(2)}}{\partial t}(u)&=\dfrac{\partial \psi_2}{\partial t}(t,(u_1,u_2,-iu_3,u_4))\\
								&=\frac{{\sigma_{12}}^2}{2}(\tilde{K}^{(2)}_t(u))^2-b_{22}\tilde{K}^{(2)}_t(u)-i u_3\kappa_2 e^{-\theta t}+u_4{\sigma_{22}}^2 e^{-2\theta t}.
							\end{aligned}
							\label{K tilde 2 EDO }
						\end{equation}
						Furthermore, we have
						\begin{equation}
							\begin{aligned}
								\mathbb{E}\left(e^{(u_1,u_2) Y_t - i u_3 U_t + u_4 V_t}\right)&=\exp\left(\tilde{g}_t(u)+y_0^{\top} \tilde{K}_t(u)\right).
							\end{aligned}
							\label{Lap en K}
						\end{equation}
						where $\tilde{g}_t(u):=a^{\top}\displaystyle\int_0^t \tilde{K}_s(u)\mathrm{~d}s$, with $\tilde{K}_s(u):=(\tilde{K}_s^{(1)}(u),\tilde{K}_s^{(2)}(u))^{\top}$, for all $s\in\R_+$ and $u\in\mathcal{U}'$.\\
						For $j\in\{1,2\}$, let us introduce the functions $v^{(j)}$ and $w^{(j)}$ defined, for all $(t,u)\in\R_+\times \mathcal{U}'$, by
						$$v_t^{(j)}(u)=- \texttt{Re}(\tilde{K}_t^{(j)}(u))\quad \text{and}\quad w_t^{(j)}(u)= \texttt{Im}(\tilde{K}^{(j)}_t(u)).$$ 
						These functions are the unique solutions of the following Riccati equations
						\begin{equation}
							\begin{cases}
								\dfrac{\partial v_t^{(1)}}{\partial t}(u)&=-\frac{{\sigma_{11}}^2}{2}(v^{(1)}_t(u))^2+\frac{{\sigma_{11}}^2}{2}(w^{(1)}_t(u))^2-b_{11}v^{(1)}_t(u)+b_{21}v^{(2)}_t(u)-u_4{\sigma_{21}}^2 e^{-2\theta t},\\[5pt]
                                \dfrac{\partial v_t^{(2)}}{\partial t}(u)&=-\frac{{\sigma_{12}}^2}{2}(v^{(2)}_t(u))^2+\frac{{\sigma_{12}}^2}{2}(w^{(2)}_t(u))^2-b_{22}v^{(2)}_t(u)-u_4{\sigma_{22}}^2 e^{-2\theta t},\\[5pt]
								\dfrac{\partial w^{(1)}_t}{\partial t}(u)&=-{\sigma_{11}}^2 v^{(1)}_t(u)w^{(1)}_t(u)-b_{11}w^{(1)}_t(u)-b_{21}w^{(2)}_t(u)- u_3\kappa_1 e^{-\theta t},\\[5pt]
                                \dfrac{\partial w^{(2)}_t}{\partial t}(u)&=-{\sigma_{12}}^2 v^{(2)}_t(u)w^{(2)}_t(u)-b_{22}w^{(2)}_t(u)- u_3\kappa_2 e^{-\theta t},\\[5pt]
								v^{(j)}_0(u)&=-\texttt{Re}(u_j),\\[5pt]
								w^{(j)}_0(u)&=\texttt{Im}(u_j).
							\end{cases}
							\label{vw}
						\end{equation}
					
						Note that $v_t^{(1)}(u),v_t^{(2)}(u)\in\R_{+}$ for all $t\in\R_+$ and $u\in \mathcal{U}'$. In particular, the system $\eqref{vw}$ yields
						\begin{equation}
							\begin{cases}
								\dfrac{\partial w^{(2)}_t}{\partial t}(u)&=-f^{(2)}_t(u)w^{(2)}_t(u)-u_3\kappa_2 e^{-\theta t}\\[5pt]
								w_0^{(2)}(u)&=\texttt{Im}(u_2),
							\end{cases}
							\label{w}
						\end{equation}
						for all $u\in \mathcal{U}'$, with $f_t^{(2)}(u):=b_{22}+{\sigma_{12}}^2 v_t^{(2)}(u)$. The general solution of the homogeneous linear differential equation
				$\frac{\partial {w}^{(2),h}_t}{\partial t}(u)=-f^{(2)}_t(u){w}^{(2),h}_t(u)$
						takes the form 
						${w}^{(2),h}_t(u)=Ce^{-\int_0^t f^{(2)}_z(u)\mathrm{~d}z}$, for some constant $C\in\R$. By variation of constants, we find the following particular solution of the inhomogeneous differential equation $\eqref{w}$ 
						\begin{equation*}
							w_t^{(2),p}(u)= -u_3\kappa_2 e^{-\int_0^t f_z^{(2)}(u)\mathrm{~d}z}\displaystyle\int_0^t e^{\int_0^s f_z^{(2)}(u)\mathrm{~d}z}e^{-\theta s}\mathrm{~d}s.
						\end{equation*}
					 Hence, a general solution of $\eqref{w}$ takes the form
						\begin{equation*}
							w_t^{(2)}(u)=	w_t^{(2),h}(u)+	w_t^{(2),p}(u)=Ce^{-\int_0^t f^{(2)}_z(u)\mathrm{~d}z}-u_3\kappa_2 e^{-\int_0^t f_z^{(2)}(u)\mathrm{~d}z}\displaystyle\int_0^t e^{\int_0^s f_z^{(2)}(u)\mathrm{~d}z}e^{-\theta s}\mathrm{~d}s.
						\end{equation*}
Taking into account the initial value $w_0^{(2)}(u)=\texttt{Im}(u_2)$. we obtain $C=\texttt{Im}(u_2)$. Consequently, 
						\begin{equation*}
							\vert w_t^{(2)}(u)\vert \leq \vert u_2\vert e^{-\int_0^t f^{(2)}_z(u)\mathrm{~d}z}+  \vert{\kappa_2 u_3}\vert\ \displaystyle\int_0^t e^{-\theta s -\int_s^t f_z^{(2)}(u)\mathrm{~d}z}\mathrm{~d}s.
						\end{equation*} 
						Since $f_t^{(2)}(u)\geq b_{22}\in\R_{+}$, we get
						\begin{equation*}
							\displaystyle\int_0^t e^{-\theta s -\int_s^t f^{(2)}_z(u)\mathrm{~d}z}\mathrm{~d}s\leq\displaystyle\int_0^t e^{-\theta s -(t-s)b_{22}}\mathrm{~d}s=
							\begin{cases}
								\begin{aligned}
									&\dfrac{e^{-\theta t}-e^{-b_{22}t}}{b_{22}-\theta}\leq \dfrac{e^{-t(\theta\wedge b_{22})}}{\vert b_{22}-\theta\vert},\quad &b_{22}\neq\theta,\\
									&te^{-b_{22}t}\leq\frac{2}{eb_{22}}e^{\dfrac{-b_{22}t}{2}},\quad & b_{22}=\theta.
								\end{aligned}
							\end{cases}
						\end{equation*}
						Thus, for all $t\in\R_+$ and $u\in \mathcal{U}'$, we obtain
						\begin{equation}
							\vert w_t^{(2)}(u)\vert \leq C_3(u)e^{-C_2 t},
							\label{w ineq}
						\end{equation}
						with $C_3(u):=\vert u_2\vert+\vert \kappa_2 u_3\vert  \left(\dfrac{1}{\vert b_{22}-\theta\vert}\mathds{1}_{\lbrace b_{22}\neq\theta\rbrace}+\dfrac{2}{eb_{22}}\mathds{1}_{\lbrace b_{22}=\theta\rbrace}\right)$ and $C_2=\theta\wedge\frac{b_{22}}{2}$. Next, by combining $\eqref{vw}$ with $\eqref{w ineq}$, we get
						\begin{equation*}
							\begin{cases}
								\dfrac{\partial v_t^{(2)}}{\partial t}(u)&\leq -b_{22} v_t^{(2)}(u)+C_4(u) e^{-C_2 t}\\[5pt]
								v_0^{(2)}(u)&=-\texttt{Re}(u_2),
							\end{cases}
						\end{equation*}  
						with $C_4(u)=\frac{{\sigma_{12}}^2}{2}C_3(u)^2-u_4{\sigma_{22}}^2$. Next, using the comparison theorem, we can derive the inequality
						$v_t^{(2)}(u)\leq \tilde{v}^{(2)}_t(u)$, for all $t\in\R_+$ and $u\in \mathcal{U}'$, where the function $\R_+\ni t\mapsto \tilde{v}^{(2)}_t$ is the unique solution of the inhomogeneous linear differential equation
						\begin{equation*}
							\begin{cases}
								\dfrac{\partial \tilde{v}^{(2)}_t}{\partial t}(u)&= -b_{22} \tilde{v}^{(2)}_t(u)+C_4(u) e^{-C_2 t}\\[5pt]
								\tilde{v}_0^{(2)}(u)&=-\texttt{Re}(u_2),
							\end{cases}
						\end{equation*} 
					and its  solution takes the form $\tilde{v}_t^{(2)}(u)=-\texttt{Re}(u_2)e^{-b_{22}t}+C_4(u)e^{-b_{22}t}\displaystyle\int_0^t e^{-C_2 s +b_{22}s}\mathrm{~d}s$. Furthermore, since we have 
						$b_{22}-C_2\geq \dfrac{b_{22}}{2}\in\R_{++}$ and $b_{22}>\dfrac{b_{22}}{2}\geq C_2$, we deduce that 
						\begin{equation*}
								0\leq v_t^{(2)}(u) \leq \tilde{v}^{(2)}_t(u)=-\texttt{Re}(u_2)e^{-b_{22}t}+C_4(u)\dfrac{e^{-C_2t}-e^{-b_{22}t}}{b_{22}-C_2}\leq C_5(u)e^{-C_2 t}, 
						\end{equation*}
						with $C_5(u)=-\texttt{Re}(u_2)+2\dfrac{C_4(u)}{b_{22}}$.
						Finally, we obtain
						\begin{equation}
							\vert \tilde{K}^{(2)}_t(u) \vert =\sqrt{(v_t^{(2)}(u))^2+(w^{(2)}_t(u))^2}\leq C_1(u)e^{-C_2t},
							\label{K vers 0}
						\end{equation}
						for all $t\in\R_+$ and $u\in \mathcal{U}'$, with $C_1(u):=\sqrt{C_5^2(u)+C_3^2(u)}\in\R_+$. 
						
						For the function $\R_+ \ni t\mapsto w^{(1)}_t$, we can notice that
						\begin{equation}
								\dfrac{\partial w^{(1)}_t}{\partial t}(u)\leq-f^{(1)}_t(u)w^{(1)}_t(u)+C_6(u) e^{-C_2 t}\\
								\label{w (2)}
						\end{equation}
					with $f_t^{(1)}(u):=b_{11}+{\sigma_{11}}^2 v_t^{(1)}(u)$ and $C_6(u)=\vert u_3\kappa_1\vert-b_{21}C_3(u)$. Let us consider the time function $ t\mapsto \tilde{w}^{(1)}_t$  solution of the following differential equation 
						\begin{equation}
							\begin{cases}
									\dfrac{\partial \tilde{w}^{(1)}_t}{\partial t}(u)&=-f^{(1)}_t(u)\tilde{w}^{(1)}_t(u)+C_6(u) e^{-C_2 t}\\[5pt]
									\tilde{w}_0^{(1)}(u)&=\texttt{Im}(u_1)
							\end{cases}
						\label{ww2}
						\end{equation}
				and its general solution that takes the form
						\begin{equation*}
							\tilde{w}_t^{(1)}(u)=	\texttt{Im}(u_1)e^{-\int_0^t f^{(1)}_z(u)\mathrm{~d}z}+C_6(u) e^{-\int_0^t f_z^{(1)}(u)\mathrm{~d}z}\displaystyle\int_0^t e^{\int_0^s f_z^{(1)}(u)\mathrm{~d}z}e^{-C_2 s}\mathrm{~d}s,
						\end{equation*}
						for all $t\in\R_+$ and $u\in \mathcal{U}'$. Consequently,  we get
						\begin{equation}
							\vert w_t^{(2)}(u)\vert \leq C_7(u)e^{-C_2 t},
							\label{w ineq (2)}
						\end{equation}
						with $C_7(u):=\vert u_1\vert+C_6(u)  \left(\dfrac{1}{\vert b_{11}-C_2\vert}\mathds{1}_{\lbrace b_{11}\neq C_2\rbrace}+\dfrac{2}{eb_{11}}\mathds{1}_{\lbrace b_{11}=C_2\rbrace}\right)$ and $C_2=\theta\wedge\frac{b_{22}}{2}$.\\
						Combining $\eqref{vw}$ with $\eqref{w ineq (2)}$, we get
						\begin{equation*}
							\begin{cases}
								\dfrac{\partial v_t^{(1)}}{\partial t}(u)&\leq -b_{11} v_t^{(1)}(u)+C_8(u) e^{-C_2 t}\\[5pt]
								v_0^{(1)}(u)&=-\texttt{Re}(u_1),
							\end{cases}
						\end{equation*}  
						with $C_8(u)=\frac{{\sigma_{11}}^2}{2}C_7(u)^2-u_4{\sigma_{21}}^2$. Then, by the help of the comparison theorem, we can derive the inequality
						$v_t^{(1)}(u)\leq \tilde{v}^{(1)}_t(u)$, for all $t\in\R_+$ and $u\in \mathcal{U}'$, where the function $t\mapsto \tilde{v}^{(1)}_t$ is the unique solution of the inhomogeneous linear differential equation
						\begin{equation*}
							\begin{cases}
								\dfrac{\partial \tilde{v}^{(1)}_t}{\partial t}(u)&= -b_{11} \tilde{v}^{(1)}_t(u)+C_8(u) e^{-C_2 t}\\[5pt]
								\tilde{v}_0^{(1)}(u)&=-\texttt{Re}(u_1),
							\end{cases}
						\end{equation*} 
					and its solution takes the form
						\begin{equation*}
								\tilde{v}_t^{(1)}(u)=\begin{cases}
									-\texttt{Re}(u_1)e^{-b_{11}t}+C_8(u)\dfrac{e^{-C_2 t}-e^{-b_{11}t}}{b_{11}-C_2},\quad &b_{11}\neq C_2\\
							      	-\texttt{Re}(u_1)e^{-b_{11}t}+C_8(u)t e^{-b_{11}t},\quad &b_{11}= C_2.	
						      	\end{cases}
						\end{equation*}
					 Thus, we obtain 
						\begin{equation*}
							\begin{aligned}
								0\leq v_t^{(1)}(u) \leq \tilde{v}^{(1)}_t(u)&=\left(-\dfrac{C_8(u)}{b_{11}-C_2}-\texttt{Re}(u_1)\right)e^{-b_{11}t}+\dfrac{C_8(u)}{b_{11}-C_2}e^{-C_2t}\leq -\texttt{Re}(u_1)e^{-(C_2\wedge b_{11}) t}.
							\end{aligned}
						\end{equation*}
						Finally, we deduce that
						\begin{equation}
							\vert \tilde{K}^{(1)}_t(u) \vert =\sqrt{(v_t^{(1)}(u))^2+(w^{(1)}_t(u))^2}\leq C_9(u)e^{-(C_2\wedge b_{11})t},
							\label{K 2 vers 0}
						\end{equation}
						for all $t\in\R_+$ and $u\in \mathcal{U}'$, with $C_1(u):=\sqrt{C_7^2(u)+\texttt{Re}^2(u_1)}\in\R_+$, for all $u\in \mathcal{U}'$.						\\
						\textbf{Step 4}: By the continuity theorem and relation $\eqref{Lap en K}$, in order to prove the convergence $Z_t\stackrel{\mathcal{D}}{\longrightarrow}Z_{\infty}$, as $t\to\infty$, it is enough to check the two following assertions: 
						\begin{enumerate}[label=\roman*)]
						\item for all $u\in\mathcal{U}'$ and $y_0\in\R_{+}$, we have $\underset{t\to\infty}{\lim} \left[y_0^{\top} \tilde{K}_t(u)+\tilde{g}_t(u)\right]=a^{\top}\displaystyle\int_0^\infty \tilde{K}_s(u)\mathrm{~d}s=:\tilde{g}_\infty(u)$
							\label{lim g_infty}
						\item the function $\mathcal{U}'\ni u\mapsto \tilde{g}_\infty(u)$ is continuous.
					\end{enumerate}
					Firstly, using equations $\eqref{K vers 0}$ and $\eqref{K 2 vers 0}$, we deduce that $\underset{t\to\infty}{\lim} y^{\top}_0 \tilde{K}_t(u)=0$, then thanks to Lebesgue's dominated convergence theorem, we get
						$\underset{t\to\infty}{\lim}\displaystyle\int_0^t \tilde{K}_s(u)\mathrm{~d}s =\displaystyle\int_0^\infty \tilde{K}_s(u)\mathrm{~d}s$.
						Secondly, let $(u^{(n)})_{n\in\N}$ a sequence in $\mathcal{U}'$ such that $\underset{n\to\infty}{\lim}u^{(n)}=u\in \mathcal{U}'$. On one hand, we have
						\begin{equation*}
							\vert \tilde{K}^{(2)}_s(u^{(n)})\vert\leq C_1(u^{(n)})e^{-C_2s},\quad n\in\N,\,  s\in\R_+
						\end{equation*}
					and
						\begin{equation*}
						\vert \tilde{K}^{(1)}_s(u^{(n)})\vert\leq C_9(u^{(n)})e^{-(C_2\wedge b_{11})s},\quad n\in\N,\,  s\in\R_+. 
					\end{equation*}  
					On the other hand, since the sequence $(u^{(n)})_{n\in\N}$ is bounded (since it is convergent), then, provided the continuity of $\mathcal{U}'\ni u\mapsto \tilde{K}_t(u)$, for $t\in\R_+$, Lebesgue's dominated convergence theorem implies
						$$\underset{n\to\infty}{\lim}\displaystyle\int_0^\infty \tilde{K}_s(u^{(n)})\mathrm{~d}s=\displaystyle\int_0^\infty \tilde{K}_s(u)\mathrm{~d}s.$$
						which shows the continuity of $\mathcal{U}'\ni u\mapsto\tilde{g}_\infty(u)$. It is worth to note that since the function $\tilde{K}_t$ does not depend on the parameters $a$ and $y_0$ as unique solution of the differential equations  $\eqref{K tilde EDO }$ and $\eqref{K tilde 2 EDO }$, its continuity can be proved by taking $a=\textbf{0}_2$ and $y_0\in\R_{++}^2$ in relation \eqref{Lap en K}. Hence, combining $\eqref{Lap en K}$ when $t\to \infty$ with $\eqref{lim g_infty}$ yields
						\begin{equation*}
							\underset{t\to\infty}{\lim}	\mathbb{E}\left(e^{(u_1,u_2) Y_t - i u_3 U_t + u_4 V_t}\right)=\exp\left(\tilde{g}_{\infty}(u_1,u_2,u_3,u_4)\right),
						\end{equation*}
						for all $u\in \mathcal{U}'$. It is worth to note that since the function $\tilde{K}_t$ does not depend on the parameters $a$ and $y_0$ as unique solution of the differential equation  $\eqref{K tilde EDO }$, its continuity can be proved by taking $a=0$ and $y_0\in\R_{++}$ in relation \eqref{Lap en K}. Choosing $(u_1,u_2,u_3,u_4)=(-\lambda_1,-\lambda_2,\mu,-\frac{1}{2}\mu^2)$, we obtain 
						$$\underset{t\to\infty}{\lim}\mathbb{E}\left(e^{-\lambda^{\top} Y_t+i\mu X_t}\right)
						=\exp\left(\tilde{g}_\infty(-\lambda_1,-\lambda_2,\mu,-\frac{1}{2}\mu^2 )+\dfrac{i\mu m}{\theta}\right).$$
						We end the proof of \ref{i Stationarity} by considering the function $\R_+\times\mathcal{U}_1\times\R^n\ni (t,u_1,u_2,u_3)\mapsto K_t(u_1,u_2,u_3):=\tilde{K}_t(u_1,u_2,u_3,-\frac{1}{2}u_3^2)$. 
						\item In order to prove the strict stationarity (translation invariance of the finite dimensional
						distributions), we show first that, for all $t\in\R_+$, the distributions of $(Y_t,X_t)$ are translation invariant and have the same distribution of $(Y_{\infty},X_{\infty})$. It is enough to check that for all $t\in\R_{+}$, $\lambda\in\R_{+}$ and $\mu\in\R^n$, 
						\begin{equation*}
							\mathbb{E}\left(\exp(K_t(-\lambda,\mu)^{\top}Y_{\infty}+i\mu e^{-\theta t}X_{\infty}+g_t(-\lambda,\mu)\right)=\exp\left(a^{\top}\int_0^{\infty}K_s(-\lambda,\mu)+\dfrac{i\mu m}{\theta}i\right)
						\end{equation*}
					\end{enumerate}
					where $g_t(-\lambda,\mu)=a^{\top}\int_0^t K_s(-\lambda,\mu)\mathrm{~d}s+\dfrac{i\mu m}{\theta}(1-e^{-\theta t})$. \\In fact, combining $\eqref{(Y X) to (Y U V)}$ with $\eqref{Lap en K}$, for $(u_1,u_2)=-\lambda$, $u_3=\mu$ and $u_4=-\dfrac{1}{2}\mu^2$, we can deduce that
					\begin{equation*}
						\mathbb{E}\left(e^{-\lambda^{\top} Y_t+i\mu X_t}\vert (Y_0,X_0)=(y_0,x_0)\right)=\exp\left(y_0^{\top} K_t(-\lambda,\mu)+i\mu e^{-\theta t}x_0+g_t(-\lambda,\mu)\right).
					\end{equation*}
					Since $K_t(-\lambda,\mu)\in\mathcal{U}_1$ for all $t\in\R_{+}$ and $(-\lambda,\mu)\in\mathcal{U}_1\times \R$, we obtain
					\begin{equation*}
						\begin{aligned}
							&\mathbb{E}\left( \exp(K_t(-\lambda,\mu)^{\top} Y_{\infty}+i\mu e^{-\theta t}X_{\infty}+g_t(-\lambda,\mu))\right)\\
							&=\exp\left(a^{\top}\int_0^{\infty}K_s(K_t(-\lambda,\mu),\mu e^{-\theta t})\mathrm{~d}s+\dfrac{i\mu m}{\theta}e^{-\theta t}+g_t(-\lambda,\mu)\right)\\
							&=\exp\left(a^{\top}\left(\int_0^{\infty}K_s(K_t(-\lambda,\mu),\mu e^{-\theta t})\mathrm{~d}s+\int_0^t K_s(-\lambda,\mu)\mathrm{~d}s\right)+\dfrac{i\mu m}{\theta}\right).
						\end{aligned}
					\end{equation*}
					Hence, it is enough to check that 
					\begin{equation*}
						\displaystyle\int_0^{\infty}K_s(-\lambda,\mu)\mathrm{~d}s=\int_0^{\infty}K_s(K_t(-\lambda,\mu),\mu e^{-\theta t})\mathrm{~d}s+\int_0^t K_s(-\lambda,\mu)\mathrm{~d}s,\quad t\in\R_{+},
					\end{equation*}
					i.e.,
					\begin{equation*}
						\displaystyle\int_t^{\infty}K_s(-\lambda,\mu)\mathrm{~d}s=\int_0^{\infty}K_s(K_t(-\lambda,\mu),\mu e^{-\theta t})\mathrm{~d}s,\quad t\in\R_{+},
					\end{equation*}
					which holds since
					\begin{equation}
						K_t(K_s(-\lambda,\mu),e^{-\theta t}\mu)=K_{t+s}(-\lambda,\mu).
						\label{equality}
					\end{equation}
					for all $t\in\R_{+}$, $s\in[t,\infty)$ and $(-\lambda,\mu)\in\mathcal{U}_1\times \R^n$.
					Indeed, the functions $\R_+ \ni t\mapsto K_t^{(1)}(K_s(-\lambda,\mu),e^{-\theta t}\mu)$ and $\R_+ \ni t \mapsto K^{(1)}_{t+s}(-\lambda,\mu)$, $s\in\R_+$ and $(-\lambda,\mu)\in\mathcal{U}_1\times \R$, satisfy the same differential equation $\eqref{EDK}$ with the initial value $K_s^{(1)}(-\lambda,\mu)$ which has a unique solution. Similarly, the functions $\R_+ \ni t\mapsto K_t^{(2)}(K_s(-\lambda,\mu),e^{-\theta t}\mu)$ and $\R_+ \ni t \mapsto K^{(2)}_{t+s}(-\lambda,\mu)$, $s\in\R_+$ and $(-\lambda,\mu)\in\mathcal{U}_1\times \R$, satisfy the same differential equation $\eqref{EDK}$ with the initial value $K_s^{(1)}(-\lambda,\mu)$ which has a unique solution. Hence, we obtain the relation $\eqref{equality}$. Finally, using the fact that $(Z_t)_{t\in\R_{+}}$ is, in particular, a time-homogeneous Markov process, then, thanks to the chain's rule for conditional expectations, we deduce that the process $(Z_t)_{t\in\R_{+}}$ is strictly stationary. 
				\end{proof}

		\section{Exponential ergodicity}
	In the following theorem, we establish the exponential ergodicity of the double Heston process using the
so-called Foster-Lyapunov criteria.
		\begin{theoreme}\ \\
			Let us consider the double Heston model $\eqref{model01}$ with $a_1\in\R_{++}$, $a_2\in(\frac{\sigma_{12}^2}{2},0)$, $b_{21}\in\R_-$,   $b_{11},b_{22}\in\R_{++}$,  $m\in\R$, $\kappa\in\R$, $\theta\in\R_{++}$ and with initial random values $Z_0=(Y_0^{(1)},Y_0^{(2)},X_0)$ independent of $(B_t,W_t)_{t\in\R_{+}}$ satisfying $\mathbb{P}(Y_0\in\R_{++}^2)=1$. Then there exists $\delta,B,\alpha,\beta\in\R_{++}$ such that
			\begin{equation}
				\underset{\vert g\vert\leq V+1}{\sup}\left\vert \mathbb{E}\left(g(Z_t)\vert Z_0=z_0\right)-\mathbb{E}(g(Z_{\infty}))\right\vert\leq B(V(z_0)+1))e^{-\delta t},
				\label{ergodicity1}
			\end{equation}
			for all $t\in\R_{+}$ and $z_0=(y_0,x_0)\in\mathcal{D}$, where $g:\mathcal{D}\to\R$ are Borel measurable functions,$$V(z):=y_1^2+\alpha y_2^2+\beta x^2,\quad z=(y_1,y_2,x)\in\mathcal{D}\text{ and }\alpha,\beta\in\R^2$$
			and $Z_\infty=(Y_\infty,X_\infty)$ is defined by $\eqref{loi limite}$.\\
			In addition, for all Borel measurable functions $f:\R_+^2\times\R \to \R$ such that $\mathbb{E}\vert f(Z_\infty) \vert<\infty$, we have
			\begin{equation}
				\mathbb{P}\left( \underset{T\to\infty}{\lim}\dfrac{1}{T}\displaystyle\int_0^Tf(Z_s)\mathrm{~d}s=\mathbb{E}f(Z_\infty)\right)=1.
				\label{ergodic}
			\end{equation}
			\label{ergodicity theorem}
		\end{theoreme}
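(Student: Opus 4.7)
The plan is to apply the Foster--Lyapunov exponential ergodicity theorem of Meyn--Tweedie \cite{Foster-Lyapunov} to the time-homogeneous Markov process $(Z_t)_{t\in\R_+}$, using the proposed candidate $V(y_1,y_2,x)=y_1^2+\alpha y_2^2+\beta x^2$ as Lyapunov function. The result requires two ingredients: (a) the existence of a ``petite'' set for some skeleton chain (so that the semigroup is genuinely irreducible with respect to an accessible reference measure), and (b) the pointwise geometric drift inequality $\mathcal{A}V \leq -c V + d$ on $\mathcal{D}$ for positive constants $c,d$, where $\mathcal{A}$ denotes the extended infinitesimal generator of $Z$. Once both are established, Theorem~6.1 of \cite{Foster-Lyapunov} delivers \eqref{ergodicity1} with an exponential rate, and Proposition~\ref{stationarity theorem AD21} identifies the unique invariant distribution with the law of $Z_\infty$.

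The core computation, and the step I expect to be most delicate, is (b). Applying $\mathcal{A}$ to $V$ (using the covariance structure $d\langle Y^{(1)},X\rangle_t = \sigma_{11}\sigma_{21}\rho_{11}Y^{(1)}_t\,dt$, $d\langle Y^{(2)},X\rangle_t=\sigma_{12}\sigma_{22}\rho_{22}Y^{(2)}_t\,dt$, the others being obvious) yields a quadratic form
\[
\mathcal{A}V(z) = -2b_{11}y_1^2 - 2\alpha b_{22}y_2^2 - 2\beta\theta x^2 + 2\alpha|b_{21}|y_1 y_2 - 2\beta\kappa_1 x y_1 - 2\beta\kappa_2 x y_2 + L(z),
\]
where $L$ is affine in $z$ with coefficients depending on $a_1,a_2,m$ and the diffusion parameters (the Feller condition $2a_2>\sigma_{12}^2$ ensures that the linear-in-$y_2$ remainder has the right sign). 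The task is then to tune $\alpha,\beta\in\R_{++}$ sufficiently small and to invoke Young's inequality $2|uv|\leq \varepsilon u^2 + \varepsilon^{-1}v^2$ on the three bilinear cross-terms so as to preserve a strictly negative-definite quadratic form whose smallest eigenvalue dominates the coefficients, while the remaining affine part is absorbed into $-\tfrac{c}{2}V + d$ via another Young inequality. The positivity of $b_{11},b_{22},\theta$ is exactly what makes this tuning possible.

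For (a), I would argue that the Feller property of the affine diffusion $(Z_t)$ (standard, since the SDE has continuous coefficients with sublinear growth on $\mathcal{D}$) combined with the accessibility of the interior $\R_{++}^2\times\R$ reduces the problem to showing that every compact set of $\R_{++}^2\times\R$ is petite. The restrictions $a_1\in\R_{++}$ and $a_2>\sigma_{12}^2/2$ guarantee that both CIR-type components remain strictly positive (so the support of the transition kernel does not collapse), while $X$ inherits controllability from the full-rank Brownian driver $(B,W)$ in its diffusion coefficient when $y_1,y_2>0$. This is a known fact for affine diffusions in the spirit of \cite{Jin2} and \cite{Barczy_stationarity_ergodicty}, and I would reference/mimic that argument rather than reprove it from scratch.

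Finally, \eqref{ergodic} is an immediate corollary: exponential ergodicity in the $V$-norm implies positive Harris recurrence with unique invariant law $\pi=\mathcal{L}(Z_\infty)$, hence $\pi$-ergodicity of the stationary process. Applying the continuous-time Birkhoff ergodic theorem to the stationary version and then transferring the result to an arbitrary initial condition (using that the set of initial laws for which the time average converges a.s.\ is invariant and absorbing under the transition semigroup) yields $\frac{1}{T}\int_0^T f(Z_s)\,ds \to \mathbb{E}f(Z_\infty)$ almost surely for every $f\in L^1(\pi)$, which is exactly \eqref{ergodic}.
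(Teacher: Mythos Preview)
Your overall architecture matches the paper's exactly: Foster--Lyapunov via Meyn--Tweedie, with (i) Borel right/Feller property, (ii) petiteness of compacts for a skeleton, and (iii) the geometric drift inequality $\mathcal{A}V\le -cV+d$ with $V(z)=y_1^2+\alpha y_2^2+\beta x^2$. Your treatment of (iii) by Young's inequality is a legitimate alternative to what the paper does, which is to complete the square successively in $x$, then $y_2$, then $y_1$, and read off that the three resulting coefficients $c_1,c_2,c_3$ are positive once $c<2\min(b_{11},b_{22},\theta)$ and $0<\beta\ll\alpha$ are chosen small. Either route works and yields the same admissible range for $c$.

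There is, however, a misplacement in your argument that you should fix. The Feller-type condition $2a_2>\sigma_{12}^2$ plays \emph{no role} in the drift inequality: the linear-in-$y_2$ part of $\mathcal{A}V$ is $(\alpha\sigma_{12}^2+\beta\sigma_{22}^2+2\alpha a_2)y_2$, which is positive regardless and is simply absorbed into the constant $d$ after the quadratic form is shown to dominate. Where $2a_2>\sigma_{12}^2$ is actually indispensable is in step (ii), the petiteness of compacts. The paper does not merely cite a general affine-diffusion fact here; it establishes Lebesgue-irreducibility of the skeleton by proving that the conditional density $f_{Z_1\mid Z_0}$ exists and is strictly positive on $\mathcal{D}$. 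This requires, in particular, showing that the density of $Y_1=(Y_1^{(1)},Y_1^{(2)})$ given $Y_0$ exists, which the paper obtains by Fourier inversion of the (conditional) characteristic function of $Y_1^{(2)}$ given $(Y_s^{(1)})_{s\in[0,1]}$; the resulting integral is controlled by $\int_{\R}(1+c\omega^2)^{-a_2/\sigma_{12}^2}\,d\omega$, finite precisely when $2a_2/\sigma_{12}^2>1$. The condition $a_1\in\R_{++}$ is likewise used through the auxiliary bound $\mathbb{E}\bigl[(\int_0^1 Y_s^{(1)}\,ds)^{-1}\bigr]<\infty$ for the CIR component, needed to control the Gaussian conditional density of $X_1$. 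So your step (a) is where the real analytic work and the parameter restrictions sit; it is not a routine citation, and your remark that the Feller condition ``ensures the linear-in-$y_2$ remainder has the right sign'' should be replaced by its genuine role in the density argument.
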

		\begin{proof}
			By Foster-Lyapunov criteria, see \cite[Theorem 6.1]{Foster-Lyapunov}, in order to prove the exponential ergodicity given by $\eqref{ergodicity1}$, it is enough to check the following assertions
			\begin{enumerate}[label=(\roman*)]
				\item $(Z_t)_{t\in\R_+}$ is a Borel right process.
				\item For the skeleton chain $(Z_n)_{n\in\N}$ all compact sets are petite.
				\item There exists $c\in\R_{++}$ and $d\in\R$ such that the inequality
			\end{enumerate}
			\begin{equation*}
				(\mathcal{A}_n\, V)(z)\leq -c\,V(z)+d,\quad z\in\mathcal{O}_n,
			\end{equation*}
			holds for all $n\in\N$, where $\mathcal{O}_n:=\lbrace z\in\mathcal{D}:\norm{z}_1<n\rbrace$, for each $n\in\N$, $\mathcal{A}_n$ denotes the extended generator of the process $Z_{t,n}=(Y_{t,n},X_{t,n})_{t\in\R_{+}}$ given by
			\begin{equation*}
				Z_{t,n}:=\begin{cases}
					Z_t,\quad&t<T_n,\\
					(0,0,n),\quad&t\geq T_n,
				\end{cases}
			\end{equation*}
			where the stopping time $T_n$ is defined by $T_n:=\inf\lbrace t\in \R_+ : Z_t \in \mathcal{D}\backslash \mathcal{O}_n\rbrace$.
			
			Next, we are going to prove the above assertions.
						\begin{enumerate}[label=(\roman*)]
				\item $(Z_t)_{t\in\R_{+}}$ is a Borel right process. In fact, by Theorem $3.5$ of \cite{Keller}, all affine processes are Feller processes and since $(Z_t)_{t\in\R_{+}}$ has continuous sample paths almost surely, it is automatically a strong Markov process, see \cite[Theorem 1 page 56]{Chung}. The Feller property implies that $(Z_t)_{t\in\R_{+}}$ is a Borel right process. 
				\item Since $(Z_n)_{n\in\N}$ is Feller, it is sufficient, by Proposition 6.2.8 in Meyn and Tweedie \cite{Meync'}, to check that it is irreducible w.r.t. Lebesgue measure. Note that the irreducibilty of $(Z_k)_{k\in\N}$ holds if  the conditional distribution of $Z_1=(Y_1,X_1)^{{\top}}$ given $Z_0=(Y_0,X_0)^{{\top}}$ is absolutely continuous w.r.t. the Lebesgue
measure on $\mathcal{D}$ such that the conditional density function $f_{Z_1\vert Z_0}$ is positive on $\mathcal{D}$. Indeed, the Lebesgue measure on $\mathcal{D}$ is $\sigma$-finite, and
				if $B$ is a Borel set in $\mathcal{D}$ with positive Lebesgue measure, then
				\begin{equation*}
					\begin{aligned}
						\mathbb{E}\left(\displaystyle\sum_{n=0}^{\infty}\mathds{1}_B(Z_n)\displaystyle\vert Z_0=z_0\right)&\geq\mathbb{P}(Z_1\in B\vert Z_0=z_0)=\displaystyle\int_B f_{Z_1\vert Z_0}(z\vert z_0)\mathrm{~d}z,
					\end{aligned}
				\end{equation*}
				for all $z_0\in\mathcal{D}$. The existence of this conditional density with the required property can be checked as follows. By taking $t=1$ in the expression $\eqref{X expression}$ and considering only the its random part $$\tilde{X}_1=-\kappa^{\top}\displaystyle\int_0^1 e^{\theta s}Y_s\mathrm{~d}s+ {\sigma_2}^{\top}\displaystyle\int_0^1 e^{\theta s}S(Y_s)(\rho \mathrm{~d}B_s+\bar{\rho}\mathrm{~d}W_s),$$
    then it is sufficient to prove that $\left(Y_1^{(1)},Y_1^{(2)},\tilde{X}_1\right)^{\top}$ is absolutely continuous with respect to the Lebesgue measure on $\R^3$ having a density function being positive on $\mathcal{D}$. 
				Note that the conditional distribution of $\tilde{X}_1$ given $(Y_t)_{t\in[0,1]}$ is a normal distribution with mean $m_Y=-\kappa^{\top}\displaystyle\int_0^1 e^{\theta s} Y_s \mathrm{~d}s$ and variance $C_Y={\sigma_{2}}^{\top}\text{diag}(\sigma_{2})\int_0^1 e^{2\theta s}Y_s \mathrm{~d}s$.
    Consequently, for all $(y_1,y_2,x)\in\mathcal{D}$, we obtain
				\begin{equation*}
					\mathbb{P}\left(Y_1^{(1)}<y_1,\,Y_1^{(2)}<y_2,\,\tilde{X}_1<x \right)=\mathbb{E}\left(\mathds{1}_{\lbrace Y_1^{(1)}<y_1\rbrace} \mathds{1}_{\lbrace Y_1^{(2)}<y_2\rbrace}\displaystyle\int_{-\infty}^x p(u)\mathrm{d}u\right),
					\end{equation*}
				where $p(u)=\dfrac{1}{\sqrt{2\pi C_Y}} \exp\left(-\dfrac{1}{2C_Y}\left(u-m_Y\right)^2\right)$, for all $u\in\R$. Then, we get
				\begin{equation*}
    \mathbb{P}\left(Y_1^{(1)}<y_1,\,Y_1^{(2)}<y_2,\,\tilde{X}_1<x \right)=\displaystyle\int_0^{y_1}\int_0^{y_2} \int_{-\infty}^x \mathbb{E}\left(p(u) \vert Y_1=(z_1,z_2)\right)f_{Y_1}(z)\mathrm{~d}z_1\mathrm{~d}z_2\mathrm{~d}u
      \end{equation*}
				where $f_{Y_1}$ denotes the density function of $Y_1=(Y_1^{(1)},Y_1^{(2)})$, given that $Y_0
    =y_0\in\R_{++}^2$.
    
    Next, we need the following result proved in the extended arXiv version of \cite{Barczy_stationarity_ergodicty}: if $\xi$ and $\eta$ are random
variables such that $P(\xi > 0) = 1$, $E(\xi) < \infty$, $P(\eta > 0) = 1$, and $\eta$ is absolutely continuous with a
density function $f_{\eta}$ having the property $f_{\eta}(x) \in\R_{++}$ Lebesgue a.e. $x \in\R_{++}$, then $E(\xi \mid \eta = y)\in\R_{++}$
Lebesgue a.e. $y\in\R_{++}$.
On the one hand, by Cauchy-Schwarz inequality, we get
\begin{equation*}
    \mathbb{E}(p(u))\leq \dfrac{1}{\sqrt{2\pi}}\mathbb{E}\left(\dfrac{1}{\sqrt{\sigma_{21}^2 \int_0^1 Y_s^{(1)}\mathrm{d}s+\sigma_{22}^2 \int_0^1 Y_s^{(2)}\mathrm{d}s}}\right)\leq  \dfrac{1}{\sqrt{2\pi}\sigma_{21}}\sqrt{\mathbb{E}\left(\dfrac{1}{\int_0^1 Y^{(1)}_s \mathrm{d}s}\right)},
\end{equation*}
 which is positive and finite by Lemma \eqref{finite Expectation of the inverse of CIR integral} in the appendix. 
    Next, we prove that the probability density $f_{Y_1}$ exists and it is positive. For all $t\in\R_+$, we denote by $\tilde{Y}_t^{(2)}$ the conditional random variable $Y_t^{(2)}\vert\left({Y}_s^{(1)}\right)_{s\in[0,t]}$. Then, similarly as above, we get
    		\begin{equation*}
    \mathbb{P}\left(Y_1^{(1)}<y_1,\,Y_1^{(2)}<y_2 \right)=\displaystyle\int_0^{y_1}\int_0^{y_2} \mathbb{E}\left(\tilde{p}(z_2) \vert Y_1^{(1)}=z_1\right)f_{Y_1^{(1)}}(z_1)\mathrm{~d}z_1\mathrm{~d}z_2,
      \end{equation*}
where $\tilde{p}$ is the probability density of $\tilde{Y}_1^{(2)}$. Note that the process $\left(\tilde{Y}_t^{(2)}\right)_{t\in\R_+}$ is represented by a squared Bessel process with time and state changes (for more details, see Lemma 2.4 in \cite{Shirakawa} and \cite{Peng}). Further more the expectation of its density can be deduced through the Fourier transform inversion as follows $2\pi \mathbb{E}\left(\tilde{p}(u)\right)=\mathbb{E}\left(\displaystyle\int_\R e^{-i\omega u}\mathbb{E}\left(e^{i\omega \tilde{Y}^{(2)}_1}\right)\mathrm{d}\omega\right)$ which is equal to
\begin{align*}
&\mathbb{E}\left(\int_\R e^{-i\omega u}\exp\left(i\omega\dfrac{y_0^{(2)}e^{-b_{22}}}{1-i\omega\frac{\sigma_{12}^2}{2b_{22}}\left(1-e^{-b_{22}}\right)}\right)\exp\left(i\omega\int_0^1\dfrac{\left(a_2-b_{21}Y_s^{(1)}\right)e^{-b_{22}(1-s)}}{1-i\omega\frac{ \sigma_{12}^2}{2b_{22}}\left(1-e^{-b_{22}(1-s)}\right)}\mathrm{d}s\right)\mathrm{d}\omega\right)\\
    &=\int_\R e^{-i\omega u}\exp\left(i\omega\dfrac{y_0^{(2)}e^{-b_{22}}}{1+\omega^2\frac{\sigma_{12}^4}{4b_{22}^2}\left(1-e^{-b_{22}}\right)^2}\right)\exp\left(-\omega^2\dfrac{\frac{\sigma_{12}^2 y_0^{(2)}}{2b_{22}}e^{-b_{22}}\left(1-e^{-b_{22}}\right)}{1+\omega^2\frac{\sigma_{12}^4}{4b_{22}^2}\left(1-e^{-b_{22}}\right)^2}\right)\\
    &\qquad\times\mathbb{E}\left(\exp\left(i\omega\int_0^1\dfrac{\left(a_2-b_{21}Y_s^{(1)}\right)e^{-b_{22}(1-s)}}{1+\omega^2\frac{\sigma_{12}^4}{4b_{22}^2}\left(1-e^{-b_{22}(1-s)}\right)^2}\mathrm{d}s\right)\right.\\
    &\qquad\qquad\left.\times\exp\left(-\omega^2\int_0^1\dfrac{\frac{\sigma_{12}^2}{2b_{22}}\left(a_2-b_{21}Y_s^{(1)}\right)e^{-b_{22}(1-s)}\left(1-e^{-b_{22}(1-s)}\right)}{1+\omega^2\frac{\sigma_{12}^4}{4b_{22}^2}\left(1-e^{-b_{22}(1-s)}\right)^2}\mathrm{d}s\right)\right)\mathrm{d}\omega.
\end{align*}
Then, since $-b_{21}Y_s^{(1)}\in\R_{+}$, it is easy to deduce, by applying the modulus to the above equation, that $\mathbb{E}\left(\tilde{p}(u)\right)$ is bounded by
\begin{align*}
\int_\R
  \exp\left(-\omega^2\int_0^1\dfrac{\frac{\sigma_{12}^2}{2b_{22}}a_2e^{-b_{22}(1-s)}\left(1-e^{-b_{22}(1-s)}\right)}{1+\omega^2\frac{\sigma_{12}^4}{4b_{22}^2}\left(1-e^{-b_{22}(1-s)}\right)^2}\mathrm{d}s\right)\mathrm{d}\omega\leq \int_\R
  \dfrac{1}{\left(1+\omega^2\frac{\sigma_{12}^4}{4b_{22}^2}\left(1-e^{-b_{22}}\right)^2\right)^{\frac{a_2}{\sigma_{12}^2}}}\mathrm{d}\omega,
\end{align*}
which is clearly finite under the assumption $\frac{2a_2}{\sigma_{12}^2}>1$. Consequently, the density $f_{Y_1}$ is positive, which completes the proof of this assertion.
				\item let us recall that since the sample paths of $(Y,X)^{\top}$ are almost sure continuous, for each $n\in\N$, the extended generator $\mathcal{A}_n$ has a form as given in \cite[Theorem 1.2]{Friesen}. In fact, for all functions $f\in\mathcal{C}_c^2(\mathcal{D},\R)$, $\mathcal{A}_nf = \mathcal{A}f$ on $O_n$, where $\mathcal{A}$ denotes the (non-extended) generator of the process $(Y,X)$, which is given, for all $z=(y_1,y_2,x)$, by
				\begin{equation*}
					\begin{aligned}
	&\mathcal{A}_nf(y_1,y_2,x)=\frac{{\sigma_{11}}^2}{2}y_1 \dfrac{\partial^2 f}{\partial y_1^2}(z)+\sigma_{11} \sigma_{21} \rho_{11} y_1 \,\dfrac{\partial^2 f}{\partial y_1\partial x}(z)+\frac{{\sigma_{12}}^2}{2}y_2\, \dfrac{\partial^2 f}{\partial y_2^2}(z)\\&+\sigma_{12} \sigma_{22} \rho_{22} y_2 \,\dfrac{\partial^2 f}{\partial y_2\partial x}(z)+\frac{1}{2} ({\sigma_{21}}^2 y_1+{\sigma_{22}}^2 y_2)\,\dfrac{\partial^2 f}{\partial x^2}(z)+(a_1-b_{11}y_1)\,\dfrac{\partial f}{\partial y_1}(z)\\&+(a_2-b_{21}y_1-b_{22}y_2)\,\dfrac{\partial f}{\partial y_2}(z)+(m-\kappa_1 y_1-\kappa_2 y_2-\theta x)\,\dfrac{\partial f}{\partial x}(z),
					\end{aligned}
				\end{equation*}
				for all $(y_1,y_2,x)\in \mathcal{O}_n,\, n\in\N$ and $f\in \mathcal{C}^2(\R_+^2\times\R,\R)$.
				Thus, it is easy to check that
				\begin{equation*}
				\begin{aligned}
					\mathcal{A}_n V(y_1,y_2,x)=&({\sigma_{11}}^2+\beta{\sigma_{21}}^2+2a_1)y_1+(\alpha{\sigma_{12}}^2+\beta{\sigma_{22}}^2+2\alpha a_2)y_2-2b_{11}y_1^2\\&-2\alpha b_{22}y_2^2-2\alpha b_{21}y_1 y_2+2\beta m x-2\beta\kappa_1y_1 x-2\beta \kappa_2 y_2 x -2\beta\theta x^2.
				\end{aligned}
				\end{equation*}
The next step is to factorize then simplify the term $\mathcal{A}_n V+cV$ using the binomial squares identity $z^2-2zC+C^2=(z-C)^2$, by taking respectively $z=x$, $z=y_2$, then $z=y_1$. Therefore, for $c_1=\beta(2\theta-c)$, $c_2=\alpha(2 b_{22}-c)-\beta\dfrac{(\sigma_{22}^2-\kappa_2)^2}{2\theta-c}$ and $$c_3=2b_{11}-c+\dfrac{1}{c_2}\left(\dfrac{\beta}{2\theta-c}\left(\dfrac{\sigma_{21}^2}{2}-\kappa_1\right)\left(\dfrac{\sigma_{22}^2}{2}-\kappa_2\right)-b_{21}\alpha\right)^2-\dfrac{\beta}{2\theta-c}\left(\dfrac{\sigma_{21}^2}{2}-\kappa_1\right)^2,$$ we deduce that $(\mathcal{A}_n V)(y_1,y_2,x)+cV(y_1,y_2,x)$ is equal to
\begin{equation*}
						-c_1\left(x-\dfrac{1}{c_1}C_1(y_1,y_2)\right)^2-c_2\left(y_2-\dfrac{1}{c_2}C_2(y_1)\right)^2-c_3\left(y_1-\dfrac{1}{c_3}C_3\right)^2+d,
			\end{equation*}
				where $C_3$ and $d$ are two real constants, the function $C_1$ depends on $y_1$ and $y_2$ and the function $C_2$ depends only on $y_1$. Hence, it is enough to take some sufficient values of $c,\alpha,\beta$ such that $c_1,c_2,c_3\in\R_{++}$, which holds for $c\in(0,2\min\lbrace\theta,b_{22},b_{11}\rbrace)$ and for some negligible positive constants $0<\beta<\alpha$.
			\end{enumerate}
		\end{proof}
 In the following statistical sections, we will consider a global matrix representation of the model \eqref{model01} given by 
\begin{equation}
 \mathrm{d}Z_t=\Lambda(Z_t)\tau \mathrm{d}t+\sigma \tilde{S}(Z_t^1,Z_t^2)\tilde{\rho}\,\mathrm{d}\tilde{B}_t,\quad  t \in \mathbb{R}_{+},
	\label{model0 Z}
\end{equation} 
where $Z_t=(Z_t^1,Z_t^2,Z_t^3)^{{\top}}=(Y_t,X_t)^{{\top}}$, for all $t\in\R_{+}$, $
\Lambda(Z_t)=\text{diag}\left(
	\Lambda_1(Z_t),\Lambda_2(Z_t),\Lambda_3(Z_t)
\right)$,
with $\Lambda_1(Z_t)=(1,-Z_t^1)$, $\Lambda_2(Z_t)=(1,-Z_t^1,-Z_t^2)$ and $\Lambda_3(Z_t)=(1,-Z_t^1,-Z_t^2,-Z_t^3)$, $\tau$ is the nine-dimensional vector given by \begin{equation*}
\tau=\left(a_1,b_{11},a_2,b_{21},b_{22},m,\kappa_1,\kappa_2,\theta\right)^{{\top}},
\end{equation*}
$\sigma=\begin{bmatrix}
    \text{diag}(\sigma_1)&\mathbf{0}_{22}\\
    \mathbf{0}_{2}^{\top}&\sigma_2^{{\top}}
\end{bmatrix}$, $\tilde{S}(Z_t^1,Z_t^2)=\mathbf{I}_2\otimes S(Z_t^1,Z_t^2)$ and $\tilde{\rho}=\begin{bmatrix}
    \mathbf{I}_2&\mathbf{0}_{22}\\
    \rho&\bar{\rho}
\end{bmatrix}$.
  The main goal of the next section is to study the maximum likelihood estimator of the drift parameter vector $\tau$ in the ergodic case using the above obtained results.
	\section{Maximum likelihood estimation}
	The following theorem is about the construction of the Maximum Likelihood estimator based on continuous-time observations $(Z_t)_{t\in[0,T]}$. \begin{theoreme}
		Let us consider the diffusion process $Z$ unique strong solution of $\eqref{model0 Z}$ with initial random values independent of $(\tilde{B}_t)_{t\in\R_{+}}$ satisfying $\mathbb{P}(Y_0\in\R_{++}^2)=1$. If $a_1\geq\frac{\sigma_{11}^2}{2}$ and $a_2\geq\frac{\sigma_{12}^2}{2}$, then, for all $T\in\R_+$, the MLE $\hat{\tau}_T$ of $\tau$ is given by
		\begin{equation}
			\hat{\tau}_T= \left(\displaystyle\int_0^T \Lambda(Z_s) \mathrm{R}(Z_s)^{-1} \Lambda(Z_s)^{\top}\mathrm{~d}s\right)^{-1}\displaystyle\int_0^T  \Lambda(Z_s) \mathrm{R}(Z_s)^{-1} \mathrm{~d}Z_s,
			\label{MLE expression}
		\end{equation}
	where, for all $s\in\R_+$, $$\mathrm{R}(Z_s)=\begin{bmatrix}
			{\sigma_{11}}^2 Y_t^{(1)}&0&\rho_{11} \sigma_{11} \sigma_{21} Y_t^{(1)}\\
			0&{\sigma_{12}}^2Y_t^{(2)}&\rho_{22}\sigma_{12} \sigma_{22} Y_t^{(2)}\\
			\rho_{11} \sigma_{11} \sigma_{21} Y_t^{(1)}&\rho_{22}\sigma_{12} \sigma_{22} Y_t^{(2)}&{\sigma_{21}}^2 Y_t^{(1)}+{\sigma_{22}}^2 Y_t^{(2)}
		\end{bmatrix}.$$
		\label{MLE estimator}
	\end{theoreme}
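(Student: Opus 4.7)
The plan is to derive the MLE as the maximizer of the log-likelihood of the observed path $(Z_t)_{t\in[0,T]}$, obtained through a Girsanov change of measure. First I would observe that the Feller-type assumptions $a_1\geq \sigma_{11}^2/2$ and $a_2\geq \sigma_{12}^2/2$ ensure that the two CIR-like components $Y^{(1)}$ and $Y^{(2)}$ remain strictly positive almost surely on $[0,T]$. As a consequence, the $3\times 3$ instantaneous covariance matrix $\mathrm{R}(Z_s)=\sigma\tilde{S}(Z_s^1,Z_s^2)\tilde{\rho}\tilde{\rho}^{\top}\tilde{S}(Z_s^1,Z_s^2)^{\top}\sigma^{\top}$ is almost surely invertible on $[0,T]$, since $\tilde{\rho}\tilde{\rho}^{\top}$ is positive definite and the diagonal entries of $\tilde{S}$ are almost surely strictly positive. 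This is the structural input that makes the likelihood computation meaningful.

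Next, I would introduce a reference probability $\mathbb{P}_0$ under which $Z$ has zero drift with the same diffusion coefficient, and apply Girsanov's theorem, using a localization argument along the stopping times $T_n$ introduced earlier to bypass a global Novikov verification. Writing $\mu_{\tau}(Z_s):=\Lambda(Z_s)\tau$ for the drift under the candidate parameter $\tau$, the Radon-Nikodym derivative at time $T$ reads
\[
\frac{\mathrm{d}\mathbb{P}_{\tau}}{\mathrm{d}\mathbb{P}_0}\bigg\vert_{\mathcal{F}_T}=\exp\left(\int_0^T \mu_{\tau}(Z_s)^{\top}\mathrm{R}(Z_s)^{-1}\mathrm{d}Z_s-\frac{1}{2}\int_0^T \mu_{\tau}(Z_s)^{\top}\mathrm{R}(Z_s)^{-1}\mu_{\tau}(Z_s)\,\mathrm{d}s\right),
\]
so the log-likelihood $\ell_T(\tau)$ is a quadratic function of $\tau$. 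The first-order optimality condition $\nabla_{\tau}\ell_T(\hat{\tau}_T)=0$ then yields the normal equations
\[
\left(\int_0^T\Lambda(Z_s)^{\top}\mathrm{R}(Z_s)^{-1}\Lambda(Z_s)\,\mathrm{d}s\right)\hat{\tau}_T=\int_0^T\Lambda(Z_s)^{\top}\mathrm{R}(Z_s)^{-1}\mathrm{d}Z_s,
\]
which, up to the transposition convention of the statement, is \eqref{MLE expression}. The Hessian of $\ell_T$ equals the negative of the information matrix above, hence is negative semi-definite, so that any solution of the normal equations is automatically the unique global maximizer.

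The main obstacle I anticipate is establishing the almost sure invertibility of the $9\times 9$ information matrix $\int_0^T\Lambda(Z_s)^{\top}\mathrm{R}(Z_s)^{-1}\Lambda(Z_s)\,\mathrm{d}s$. Because $\mathrm{R}(Z_s)^{-1}$ is almost surely strictly positive definite, this reduces to a pathwise non-degeneracy statement: no nontrivial linear combination of the entries appearing in the block matrix $\Lambda(Z_s)=\mathrm{diag}(\Lambda_1,\Lambda_2,\Lambda_3)(Z_s)$ can vanish on a subset of $[0,T]$ of positive Lebesgue measure. Since these entries are affine functions of $(Z_s^1,Z_s^2,Z_s^3)$ with the particular block structure of the three drift equations, this will follow from the continuous random fluctuation of the paths, which have strictly positive quadratic variation on every subinterval where $Y^{(1)}$ and $Y^{(2)}$ are positive. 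Combining this non-degeneracy with the Girsanov derivation above delivers the explicit closed-form expression of $\hat{\tau}_T$ announced in the theorem.
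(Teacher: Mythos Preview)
Your approach matches the paper's: Radon--Nikodym density against a driftless reference measure (the paper cites relation (7.138) in Lipster--Shiryaev directly rather than invoking Girsanov with localization, but the content is identical), quadratic log-likelihood in $\tau$, normal equations, concavity. Two minor differences are worth noting. First, the paper uses the Feller conditions not merely for strict positivity of $Y^{(1)},Y^{(2)}$ but to guarantee that the log-likelihood is a.s.\ finite: it computes the entries of $R(Z_s)^{-1}$ explicitly, bounds them by $C_4/Y_s^{(1)}+C_5/Y_s^{(2)}$, and then invokes $\mathbb{P}\bigl(\int_0^T 1/Y_s^{(i)}\,\mathrm{d}s<\infty\bigr)=1$ under $a_i\geq\sigma_{1i}^2/2$ (Ben Alaya--Kebaier, combined with a comparison argument for $i=2$). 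Your strict-positivity route also works, since continuity on the compact interval $[0,T]$ then gives a pathwise positive lower bound for $Y^{(i)}$ and hence a bounded $R(Z_s)^{-1}$, but you should make that compactness step explicit. Second, you flag the a.s.\ invertibility of the $9\times 9$ information matrix as the main obstacle and sketch a quadratic-variation argument; the paper simply asserts that the Hessian is negative definite at this stage without justification (the analogous invertibility for the limiting matrix is only established later, in the consistency proof), so on this point you are actually more careful than the paper.
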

	\begin{proof}
		Let $(\tilde{Z}_t)_{t\in\R_+}$ be the process defined, for all $t\in\R_+$, by the following SDE
		\begin{equation*}
			\mathrm{d}\tilde{Z}_t=\tilde{\rho}(\tilde{Z}_t)\mathrm{~d}\tilde{B}_t,
		\end{equation*}
		where, for all $t\in\R_+$, $\tilde{B}_t=(B_t^1,B_t^2,W_t^1,W_t^2)^{{\top}}$ and $\tilde{\rho}(\tilde{Z}_t)\in\mathcal{M}_{3,4}$ is given by 
		\begin{equation}\tilde{\rho}(\tilde{Z}_t)=\begin{bmatrix}
			\sigma_{11}\sqrt{\tilde{Y}_t^{(1)}}&0&0&0\\
			0&\sigma_{12}\sqrt{\tilde{Y}_t^{(2)}}&0&0\\
			\rho_{11}\sigma_{21}\sqrt{\tilde{Y}_t^{(1)}}&			\rho_{22}\sigma_{22}\sqrt{\tilde{Y}_t^{(2)}}&
			\bar{\rho}_{11}\sigma_{21}\sqrt{\tilde{Y}_t^{(1)}}
			&\bar{\rho}_{22}\sigma_{22}\sqrt{\tilde{Y}_t^{(2)}}			\end{bmatrix}.\label{rho tilde}
   \end{equation}
		It is easy to check that $\tilde{\rho}(Z_t)\tilde{\rho}^{{\top}}(Z_t)=\mathrm{R}(Z_t)$. Using the relation (7.138) in Lipster and Shiryaev \cite{Lipster} applied on the process $(Z_t)_{t\in\R_+}$, the associated likelihood-ratio is given by
		\begin{equation*}
			\begin{aligned}
				\dfrac{\mathrm{d}\mathbb{P}_{Z}}{\mathrm{d}\mathbb{P}_{\tilde{Z}}}(t)&=\exp\left(\displaystyle\int_0^t \tau^{\top} \Lambda(Z_s) \mathrm{R}(Z_s)^{-1} \mathrm{~d}Z_s -\dfrac{1}{2} \displaystyle\int_0^t \tau^{\top}\Lambda(Z_s) \mathrm{R}(Z_s)^{-1} \Lambda(Z_s)^{\top} \tau\mathrm{~d}s\right), 
			\end{aligned}
		\end{equation*}
		which is well defined for all $t\in\R_+$. In fact, we can observe that $\displaystyle\int_0^t \tau^{\top}\Lambda(Z_s) \mathrm{R}(Z_s)^{-1} \Lambda(Z_s)^{\top} \tau\mathrm{~d}s$ is almost surely finite when $\displaystyle\int_0^t \left(\mathrm{R}(Z_s)^{-1}\right)_{ij} \mathrm{~d}s$ is almost surely finite, for all $i,j\in\{1,2,3\}$. Furthermore, by simple calculation, the components of the inverse matrix take the form
        \begin{equation*}
            \left(\mathrm{R}(Z_s)^{-1}\right)_{ij}=\dfrac{C_1}{\sigma_{21}^2\bar{\rho}_{11}^2Y_s^{(1)}+\sigma_{22}^2\bar{\rho}_{22}^2Y_s^{(2)}}+\dfrac{C_2 \left(Y_s^{(1)}\right)^2+C_3 \left(Y_s^{(2)}\right)^2}{\sigma_{21}^2\bar{\rho}_{11}^2\left(Y_s^{(1)}\right)^2Y_s^{(2)}+\sigma_{22}^2\bar{\rho}_{22}^2Y_s^{(1)}\left(Y_s^{(2)}\right)^2},
        \end{equation*}
        for some constants $C_1$, $C_2$ and $C_3$. Hence, there exist $C_4,C_5\in\R$ such that we have
        \begin{equation*}
            \int_0^t \left(\mathrm{R}(Z_s)^{-1}\right)_{ij} \mathrm{d}s \leq  \int_0^t \dfrac{C_4}{Y_s^{(1)}} \mathrm{d}s + \int_0^t \dfrac{C_5}{Y_t^{(2)}}\mathrm{d}s,
        \end{equation*}
which is almost surely finite, since we have $\mathbb{P}\left(\displaystyle\int_0^t \dfrac{1}{Y_s^{(i)}}\mathrm{d}s<\infty\right)=1$, when $a_i\geq \frac{\sigma_{1i}^2}{2}$, for all $i\in\{1,2\}$ (see, e.g., the proof of Proposition 4 in Ben Alaya and Kebaier \cite{Alaya1} for the case $i=1$, and combine the same proposition with the comparison theorem for the second case $i=2$ by taking $b_{21}=0$). 

		Denote by $L(\tau)$ the log-likelihood of $\tau$ defined, for all $t\in\R_+$, as follow
		\begin{equation}
			L(\tau)=\displaystyle\int_0^t \tau^{\top} \Lambda(Z_s) \mathrm{R}(Z_s)^{-1} \mathrm{~d}Z_s -\dfrac{1}{2} \displaystyle\int_0^t \tau^{\top}\Lambda(Z_s) \mathrm{R}(Z_s)^{-1} \Lambda(Z_s)^{\top}\tau\mathrm{~d}s.
			\label{Log-Likelihood}
		\end{equation}
		The above equation $\eqref{Log-Likelihood}$ yields 
		$$\triangledown_{\tau}L(\tau)=\displaystyle\int_0^t  \Lambda(Z_s) \mathrm{R}(Z_s)^{-1} \mathrm{~d}Z_s - \displaystyle\int_0^t \Lambda(Z_s) \mathrm{R}(Z_s)^{-1} \Lambda(Z_s)^{\top}\mathrm{~d}s\,\tau.$$
		Furthermore, we have
		$$H_{\tau}L(\tau)=-\displaystyle\int_0^t \Lambda(Z_s) \mathrm{R}(Z_s)^{-1} \Lambda(Z_s)^{\top}\mathrm{~d}s,
		$$
		which is negative definite. Then, Our maximum likelihood estimator is the solution of $\triangledown_{\tau}L(\tau)=\mathbf{0}_{9}$ given by 
		\begin{equation*}
			\hat{\tau}_t= \left(\displaystyle\int_0^t \Lambda(Z_s) \mathrm{R}(Z_s)^{-1} \Lambda(Z_s)^{\top}\mathrm{~d}s\right)^{-1}\displaystyle\int_0^t  \Lambda(Z_s) \mathrm{R}(Z_s)^{-1} \mathrm{~d}Z_s.
		\end{equation*}
	\end{proof}
	\subsection{Consistency of the MLE}
	\begin{theoreme}
		Let us consider the diffusion process Z unique strong solution of $\eqref{model0 Z}$ with initial random values independent of $(\tilde{B}_t)_{t\in\R_{+}}$ satisfying $\mathbb{P}(Y_0\in\R_{++}^2)=1$. If $a_1\geq\frac{\sigma_{11}^2}{2}$, $a_2\geq\frac{\sigma_{12}^2}{2}$ and $b_{11},b_{22},\theta\in\R_{++}$, then, for $T\in\R_+$, the MLE $\hat{\tau}_T$ of $\tau$ given by $\eqref{MLE expression}$ is strongly consistent, i.e. 
		$\mathbb{P}\left(\underset{T\to\infty}{\lim} \hat{\tau}_T=\tau\right)=1.$
		\label{consistency Theorem}
	\end{theoreme}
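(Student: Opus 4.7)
Substituting the SDE \eqref{model0 Z} into the explicit MLE formula \eqref{MLE expression} yields the decomposition
$\hat{\tau}_T - \tau = \mathcal{I}_T^{-1}\,\mathcal{M}_T$,
where $\mathcal{I}_T := \int_0^T \Lambda(Z_s)\mathrm{R}(Z_s)^{-1}\Lambda(Z_s)^{\top}\mathrm{d}s$ and $\mathcal{M}_T := \int_0^T \Lambda(Z_s)\mathrm{R}(Z_s)^{-1}\sigma\tilde{S}(Z_s^1,Z_s^2)\tilde{\rho}\,\mathrm{d}\tilde{B}_s$. The plan is to establish (a) that $T^{-1}\mathcal{I}_T$ converges a.s.\ to a deterministic invertible matrix $I(\tau)$, and (b) that $T^{-1}\mathcal{M}_T \to 0$ a.s., then to conclude by the continuous mapping theorem.

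\textbf{Ergodic limit of $\mathcal{I}_T$.} For (a), I would apply the ergodic limit theorem \eqref{ergodic} from Theorem \ref{ergodicity theorem}, valid in the present subcritical regime $b_{11},b_{22},\theta\in\R_{++}$, entry-wise to the symmetric matrix-valued map $z\mapsto\Lambda(z)\mathrm{R}(z)^{-1}\Lambda(z)^{\top}$. This requires checking that every component is integrable under the stationary law $\mathcal{L}(Z_\infty)$. Using the explicit expression of $\mathrm{R}^{-1}$ displayed in the proof of Theorem \ref{MLE estimator}, the entries are linear combinations of rational expressions in $Y_\infty^{(1)}, Y_\infty^{(2)}$ (of the forms $1/(c_1 Y^{(1)} + c_2 Y^{(2)})$ and $Y^{(i)}/(c_1 (Y^{(1)})^2 Y^{(2)} + c_2 Y^{(1)} (Y^{(2)})^2)$) multiplied by polynomials of degree at most two in $X_\infty$; the requisite integrability then follows from finite negative moments of $Y_\infty^{(1)}$ and $Y_\infty^{(2)}$ under the Feller-type conditions $a_i \geq \sigma_{1i}^2 / 2$ (cf.\ the argument in \cite{Alaya1}), combined with second-moment bounds for $X_\infty$, which I would extract by differentiating the stationary Laplace transform \eqref{loi limite} at the origin and using the decay bounds \eqref{K vers 0}--\eqref{K 2 vers 0} on the Riccati solution to justify the required interchange. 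The limit $I(\tau) := \mathbb{E}[\Lambda(Z_\infty)\mathrm{R}(Z_\infty)^{-1}\Lambda(Z_\infty)^{\top}]$ is symmetric positive semidefinite by construction; to promote it to strict positivity, I would argue by contradiction: if $v^{\top}I(\tau)v = 0$, then $\Lambda(Z_\infty)^{\top}v$ must vanish on a set of full stationary measure, which together with the block structure of $\Lambda$ and the positivity of the stationary density on $\mathcal{D}$ (established in the irreducibility step of the proof of Theorem \ref{ergodicity theorem}) forces $v = 0$.

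\textbf{Vanishing of the martingale term.} For (b), the process $(\mathcal{M}_T)_{T\in\R_+}$ is a continuous local martingale with quadratic variation
$\langle\mathcal{M}\rangle_T = \int_0^T \Lambda(Z_s)\mathrm{R}(Z_s)^{-1}\sigma\tilde{S}(Z_s^1,Z_s^2)\tilde{\rho}\tilde{\rho}^{\top}\tilde{S}(Z_s^1,Z_s^2)^{\top}\sigma^{\top}\mathrm{R}(Z_s)^{-1}\Lambda(Z_s)^{\top}\mathrm{d}s = \mathcal{I}_T$,
using the identity $\tilde{\rho}(\tilde{Z}_t)\tilde{\rho}(\tilde{Z}_t)^{\top} = \mathrm{R}(Z_t)$ noted in the proof of Theorem \ref{MLE estimator}. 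By step (a), $T^{-1}\langle\mathcal{M}\rangle_T \to I(\tau)$ a.s., so in particular $\langle\mathcal{M}\rangle_T \to \infty$ a.s.\ along every coordinate; the strong law of large numbers for continuous local martingales (recalled in the appendix) then delivers $\mathcal{M}_T / T \to 0$ a.s. Combined with the invertibility of $I(\tau)$ from (a) and continuity of matrix inversion, this gives $\hat{\tau}_T \to \tau$ a.s.

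\textbf{Main obstacle.} The delicate point is verifying the integrability of the mixed rational-polynomial entries of $\Lambda(Z_\infty)\mathrm{R}(Z_\infty)^{-1}\Lambda(Z_\infty)^{\top}$, since the Feller condition $a_i \geq \sigma_{1i}^2/2$ is only borderline for finite negative moments of $Y_\infty^{(i)}$. I expect the cleanest route is to exploit the joint structure: the $X_\infty$ polynomial can be handled by conditioning on the trajectory of $Y$ (which renders $X$ conditionally Gaussian, as in the proof of Theorem \ref{ergodicity theorem}(ii)), reducing everything to negative-moment estimates for the CIR components, for which sharp integrability follows from the explicit Gamma-type density and its extended CIR analogue. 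Once this step is secured, the rest of the argument is a routine combination of the ergodic and martingale limit theorems.
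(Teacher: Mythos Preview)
Your approach is essentially the same as the paper's: the same error decomposition $\hat{\tau}_T-\tau=\langle M\rangle_T^{-1}M_T$ with $\langle M\rangle_T=\mathcal{I}_T$, the ergodic theorem applied entry-wise to $T^{-1}\mathcal{I}_T$, the Liptser--Shiryaev strong law for the martingale term (the paper phrases this via a diagonal normalisation $D_T=\mathrm{diag}(\langle M^{(p)}\rangle_T^{-1})$ and then rewrites $(D_T\langle M\rangle_T)^{-1}D_TM_T$, but this is equivalent to your $T^{-1}$-normalisation once $T^{-1}\langle M^{(p)}\rangle_T$ is shown to converge to a positive limit), and the invertibility argument via the absolutely continuous stationary law. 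The paper carries out the integrability verification you flag as the ``main obstacle'' explicitly rather than by conditioning: it bounds each rational entry by a constant times $\mathbb{E}\big[(Y_\infty^{(i)})^{-1}\big]$, $\mathbb{E}\big[Y_\infty^{(i)}\big]$, or similar, handles the mixed $X_\infty^2/\xi(Y_\infty)$ term by H\"older with exponent $\zeta<2a_1/\sigma_{11}^2$, and then invokes known CIR moment results from \cite{Alaya,Alaya1} together with a comparison argument (against the CIR with $b_{21}=0$) for the negative moments of $Y_\infty^{(2)}$.
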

	\begin{proof}
	The SDE \eqref{model0 Z} implies
    \begin{equation*}
			\tau=\left(\displaystyle\int_0^T \Lambda(Z_s) \mathrm{R}^{-1}(Z_s)\Lambda(Z_s)^\top\mathrm{~d}s\right)^{-1}\left(\displaystyle\int_0^T \Lambda(Z_s) \mathrm{R}^{-1}(Z_s)\mathrm{~d}Z_s-\displaystyle\int_0^t \Lambda(Z_s) \mathrm{R}^{-1}(Z_s)\tilde{\rho}(Z_t)\mathrm{~d}\tilde{B}_t\right).
		\end{equation*}
		Then, by combining the above equation with the MLE expression \eqref{MLE expression}, we get
		\begin{equation}
			\begin{aligned}
				\hat{\tau}_T-\tau =\langle M \rangle_T^{-1} M_T,             
			\end{aligned}
			\label{error}
		\end{equation}
		where $M$ is a brownian martingale given by
		\begin{equation*}
			M_T=\displaystyle\int_0^T \Lambda(Z_s) \mathrm{R}^{-1}(Z_s)\tilde{\rho}(Z_t)\mathrm{~d}\tilde{B}_t
		\end{equation*}
		and its quadratic variation 
		\begin{equation*}
			\langle M\rangle_T =\displaystyle\int_0^T \Lambda(Z_s) \mathrm{R}^{-1}(Z_s)\Lambda(Z_s)^\top\mathrm{~d}s.
		\end{equation*}
 (The square integrable martingale property can be justified in the following steps thanks to the strict stationarity of the process $Z$ which implies that $\mathbb{E}\left(\langle M^{(p)}\rangle_T\right)=T\mathbb{E}\left(\langle M^{(p)}\rangle_\infty\right)<\infty$, for all $p\in\{1,\ldots,9\}$). Note that, for all $t\in\R_+$, by simple computations we can explicit the expression of $M_T:=(M_T^{(1)},\ldots,M_T^{(9)})^\top$ and deduce that all its components can be written as a affine combination of the following martingales \\ 
		\begin{equation*}
				\tilde{M}_T^{(1)}=\displaystyle\int_0^T\dfrac{ c_1\sqrt{Y_s^{(i)}}+c_2\frac{Y_s^{(j)}}{\sqrt{Y_s^{(i)}}}}{\xi(Y_s)}\mathrm{~d}\tilde{B}_s^k,\quad \tilde{M}_T^{(2)}=\displaystyle\int_0^T \dfrac{c_3 (Y_s^{(i)})^{\frac{3}{2}}+c_4Y_s^{(j)} \sqrt{Y_s^{(i)}}}{\xi(Y_s)}\mathrm{~d}\tilde{B}_s^k
		\end{equation*}
	\begin{equation*}
	\begin{aligned}
		\tilde{M}_T^{(3)}=\displaystyle\int_0^T\dfrac{c_5Y_s^{(i)}\sqrt{Y_s^{(j)}}+c_6\frac{(Y_s^{(i)})^{\frac{3}{2}}}{\sqrt{Y_s^{(j)}}}}{\xi(Y_s)}\mathrm{~d}\tilde{B}_s^k \quad\text{and}\quad \tilde{M}_T^{(4)}=\displaystyle\int_0^T \dfrac{\sqrt{Y_s^{(i)}}X_s}{\xi(Y_s)}\mathrm{~d}W_s^i,
	\end{aligned}
\end{equation*}
 for all $i,j\in\{1,2\}$ such that $i\neq j$, $k\in\{1,2,3,4\}$ and some constants $c_p\in\R$, $p\in\{1,\ldots,5\}$ where $\xi(Y_s)=\bar{\rho}_{11}^2 {\sigma_{21}}^2 Y_s^{(1)}+\bar{\rho}_{22}^2 {\sigma_{22}}^2 Y_s^{(2)}$. Consequently, we can compute as well the quadratic variation $V_T^{(p)}=\langle M^{(p)}\rangle_T$, for all $p\in\{1,\ldots,9\}$ and deduce that it can be written as a linear combination of
		\begin{equation*}
			\tilde{V}^{(1)}_{T,(\alpha,\beta)}=\displaystyle\int_0^T \dfrac{\left(Y_s^{(i)}\right)^\alpha\left(Y_s^{(j)}\right)^\beta}{\xi^2(Y_s)}\mathrm{d}s, 
   	\end{equation*}
    for all $(\alpha,\beta)\in\{(1,0),(-1,2),(1,2),(3,0),(4,1)\}$,
		\begin{equation*}
		\tilde{V}^{(2)}_T=\displaystyle\int_0^T \dfrac{(Y_s^{(1)})^3}{Y_s^{(2)}\xi^2(Y_s)}\mathrm{d}s, \quad\tilde{V}^{(3)}_T=\displaystyle\int_0^T \dfrac{\left(Y_s^{(1)}\right)^{\frac{5}{2}}}{\xi^2(Y_s)}\mathrm{d}s,\quad \tilde{V}^{(4)}_T=\displaystyle\int_0^T \dfrac{1}{\xi(Y_s)}\mathrm{d}s
	\end{equation*}
 and
	$\tilde{V}^{(5)}_T=\displaystyle\int_0^T \dfrac{(Z_s^{(k)})^2}{\xi(Y_s)}\mathrm{d}s$, for all $i,j\in\{1,2\}$ such that $i\neq j$ and $k\in\{1,2,3\}$.
		Hence, using the ergodicity theorem \ref{ergodicity theorem}, we deduce that
					\begin{equation*}
			\dfrac{1}{T}\tilde{V}^{(1)}_{T,(\alpha,\beta)}\stackrel{a.s.}{\longrightarrow}\mathbb{E}\left( \dfrac{\left(Y_\infty^{(i)}\right)^\alpha\left(Y_\infty^{(j)}\right)^\beta}{\xi^2(Y_\infty)}\right), 
   	\end{equation*}
    for all $(\alpha,\beta)\in\{(1,0),(-1,2),(1,2),(3,0),(4,1)\}$,
		\begin{equation*}
		\dfrac{1}{T}\tilde{V}^{(2)}_T\stackrel{a.s.}{\longrightarrow}\mathbb{E}\left( \dfrac{(Y_\infty^{(1)})^3}{Y_\infty^{(2)}\xi^2(Y_\infty)}\right),\quad\dfrac{1}{T}\tilde{V}^{(3)}_T\stackrel{a.s.}{\longrightarrow}\mathbb{E}\left( \dfrac{\left(Y_\infty^{(1)}\right)^{\frac{5}{2}}}{\xi^2(Y_\infty)}\right),
  \end{equation*}
	\begin{equation*}
	\dfrac{1}{T}\tilde{V}^{(4)}_T\stackrel{a.s.}{\longrightarrow}\mathbb{E}\left( \dfrac{1}{\xi(Y_\infty)}\right)\ \text{ and }\ \dfrac{1}{T}\tilde{V}^{(5)}_T\stackrel{a.s.}{\longrightarrow}\mathbb{E}\left( \dfrac{(Z_\infty^{(k)})^2}{\xi(Y_\infty)}\right),\end{equation*}
as $T\to\infty$. In fact, for $i,j\in\lbrace 1,2\rbrace$ with $i\neq j$, we have
		\begin{equation*}
\mathbb{E}\left( \dfrac{Y_{\infty}^{(i)}}{\xi^2(Y_\infty)} \right)\leq\dfrac{1}{\bar{\rho}_{ii}^4 {\sigma_{2i}}^4}\mathbb{E}\left( \dfrac{1}{Y_{\infty}^{(i)}} \right) ,\quad \mathbb{E}\left( \dfrac{(Y_{\infty}^{(i)})^2}{Y_{\infty}^{(j)}\xi^2(Y_\infty)}\right)\leq\dfrac{1}{\bar{\rho}_{ii}^4 {\sigma_{2i}}^4}\mathbb{E}\left( \dfrac{1}{Y_{\infty}^{(j)}}\right) ,
		\end{equation*}
     \begin{equation*}
\mathbb{E}\left( \dfrac{(Y_{\infty}^{(i)})^3}{\xi^2(Y_\infty)}\right)\leq         \frac{1}{\bar{\rho}_{ii}^4 {\sigma_{2i}}^4}\mathbb{E}\left( Y_{\infty}^{(i)}\right),\quad \mathbb{E}\left( \dfrac{Y_{\infty}^{(i)}(Y_{\infty}^{(j)})^2}{\xi^2(Y_\infty)}\right)\leq\dfrac{1}{\bar{\rho}_{jj}^4 {\sigma_{2j}}^4} \mathbb{E}\left(Y_{\infty}^{(i)}\right),
     \end{equation*}
\begin{equation*}
	\mathbb{E}\left( \dfrac{(Y_{\infty}^{(i)})^4Y_{\infty}^{(j)}}{\xi^2(Y_\infty)}\right)\leq \dfrac{1}{\bar{\rho}_{ii}^4 {\sigma_{2i}}^4}	\mathbb{E}\left((Y_{\infty}^{(i)})^2Y_{\infty}^{(j)}\right),\quad \mathbb{E}\left( \dfrac{(Y_{\infty}^{(1)})^3}{Y_{\infty}^{(2)}\xi^2(Y_\infty)}\right)\leq \dfrac{1}{\bar{\rho}_{11}^4 {\sigma_{21}}^4} \mathbb{E}\left( \dfrac{Y_{\infty}^{(1)}}{Y_{\infty}^{(2)}}\right),
\end{equation*}

\begin{equation*}
	\mathbb{E}\left( \dfrac{(Y_{\infty}^{(1)})^{\frac{5}{2}}}{\xi^2(Y_\infty)}\right)\leq \dfrac{1}{\bar{\rho}_{11}^4 {\sigma_{21}}^4} \mathbb{E}\left(\sqrt{Y_{\infty}^{(1)}}\right),\quad \mathbb{E}\left( \dfrac{1}{\xi(Y_\infty)}\right)\leq \dfrac{1}{\bar{\rho}_{11}^2 {\sigma_{21}}^2} \mathbb{E}\left( \dfrac{1}{ Y_{\infty}^{(1)}}\right) ,
\end{equation*}

\begin{equation*}
	\mathbb{E}\left( \dfrac{(Y_{\infty}^{(i)})^2}{\xi(Y_\infty)}\right)\leq \dfrac{1}{\bar{\rho}_{ii}^2 {\sigma_{2i}}^2} \mathbb{E}\left(Y_{\infty}^{(i)}\right)
\end{equation*}
and \begin{equation*}
	\mathbb{E}\left( \dfrac{X_{\infty}^2}{\xi(Y_\infty)}\right)\leq \dfrac{1}{\bar{\rho}_{11}^2 {\sigma_{21}}^2} \mathbb{E}\left(X_{\infty}^{2\gamma}\right)^{\frac{1}{\gamma}}\mathbb{E}\left(\dfrac{1}{{Y_{\infty}^{(1)}}^\zeta}\right)^{\frac{1}{\zeta}},
\end{equation*}
	for all $\gamma\in\R_{++}$ and $\zeta\in]0,2a_1[$ such that $\frac{1}{\gamma}+\frac{1}{\zeta}=1$. In addition, on one hand, $\mathbb{E}\left((Z_{\infty}^i)^{2\gamma}\right)<\infty$ by \cite[Theorem B.2]{Lipster}. On the other hand, we have $\mathbb{E}\left(\dfrac{1}{(Z_{\infty}^1)^\zeta}\right)<\infty$ by \cite[Proposition 1]{Alaya} and $\mathbb{E}\left(\dfrac{1}{(Z_{\infty}^2)^\zeta}\right)<\infty$ by the comparison theorem; in fact, by considering the CIR process $\mathcal{Y}_t^{(2)}$ satisfying the SDE associated to $Y_t^{(2)}$ with $b_{21}=0$, we can easily check that $\dfrac{1}{T}\displaystyle\int_0^T\dfrac{1}{Y_s^{(2)}}\mathrm{d}s\leq\dfrac{1}{T}\displaystyle\int_0^T\dfrac{1}{\mathcal{Y}_s^{(2)}}\mathrm{d}s\stackrel{a.s.}{\longrightarrow}\mathbb{E}\left( \dfrac{1}{\mathcal{Y}_{\infty}^{(2)}} \right)$ which is finite. Consequently, for $D_T=\text{diag}(V_T^{-1})$, where $V_T=(V_T^{(1)},\ldots,V_T^{(9)})$, we get
		\begin{equation*}
			\hat{\tau}_T-\tau=(D_T\langle M\rangle_T)^{-1}D_T M_T.
		\end{equation*}
		Hence, by Theorem \ref{Lipster Shiryaev theorem} in the appendix, we get $D_T M_T \stackrel{a.s.}{\longrightarrow} \mathbf{0}_{9}$, as $T\to\infty$. It remains to prove that $D_T\langle M\rangle_T=\left(T D_T\right)\left(\dfrac{1}{T}\langle M\rangle_T\right)$ converges almost surely to an invertible limit matrix, as $T\to\infty$. On one hand, by the above calculations we proved that $T D_T$ converges and its limit is invertible, on the other hand, for all non-null vector $y\in\R^{9}$, we have  
		\begin{equation*}
			\begin{aligned}
				y^{{\top}}\underset{T\to\infty}{\lim}\dfrac{1}{T}\langle M\rangle_T y =\mathbb{E}\left(y^{{\top}}\Lambda(Z_\infty)\mathrm{R}^{-1}(Z_\infty)\Lambda(Z_\infty)^{\top}y\right).
			\end{aligned}
		\end{equation*}
	Since, the vector $y^\top\Lambda(Z_\infty)$ is a combination of $1,Z_\infty^1,Z_\infty^2$ 
 and $Z_\infty^3$ and we know that $Z_\infty$ has a density thanks to the strict stationarity of $(Z_t)_{t\in\R_{+}}$, then it is almost surely different to zero. We complete the proof of Theorem \ref{consistency Theorem} using the positive definite property of the matrix $\mathrm{R}(Z_\infty)$. Indeed, for all non-null vector $z\in\R^{3}$, we have $z^{{\top}}\mathrm{R}(Z_\infty)z=z^{{\top}}\tilde{\rho}(Z_\infty)\tilde{\rho}(Z_\infty)^{{\top}}z$ and $z^{{\top}}\tilde{\rho}(Z_\infty)$ is almost surely different to zero as affine combination of $\left(\sqrt{Y^{(1)}_\infty},\sqrt{Y^{(2)}_\infty}\right)$.
	\end{proof}
	\subsection{Asmptotic behavior of the MLE}
	\begin{theoreme}
	Let us consider the diffusion process Z unique strong solution of $\eqref{model0 Z}$ with initial random values independent of $(\tilde{B}_t)_{t\in\R_{+}}$ satisfying $\mathbb{P}(Y_0\in\R_{++}^2)=1$. If $a_1>\frac{\sigma_{11}^2}{2},a_2>\frac{\sigma_{12}^2}{2}$, $b_{11},b_{22},\theta\in\R_{++}$, then the MLE $\hat{\tau}_T$ of $\tau$ given by $\eqref{MLE expression}$ is asymptotically normal, namely
		\begin{equation*}
			\sqrt{T}(\hat{\tau}_T -\tau)\stackrel{\mathcal{D}}{\longrightarrow}\mathcal{N}_9(0,\mathcal{V}),\quad \text{as }T\to\infty,
		\end{equation*}
		where $\mathcal{V}$ is the inverse matrix of $\mathbb{E}\left(\Lambda(Z_\infty)\mathrm{R}^{-1}(Z_\infty)\Lambda(Z_\infty)^{\top}\right)$.
	\end{theoreme}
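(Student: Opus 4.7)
The plan is to read the asymptotic normality off the same decomposition already used to prove consistency. As shown there, one has the error representation
\begin{equation*}
\sqrt{T}(\hat{\tau}_T-\tau) \;=\; \left(\tfrac{1}{T}\langle M\rangle_T\right)^{-1}\,\tfrac{1}{\sqrt{T}}\,M_T,
\end{equation*}
where $M_T=\int_0^T \Lambda(Z_s)\mathrm{R}^{-1}(Z_s)\tilde{\rho}(Z_s)\,\mathrm{d}\tilde{B}_s$ is a continuous $\mathbb{R}^9$-valued martingale and $\langle M\rangle_T=\int_0^T\Lambda(Z_s)\mathrm{R}^{-1}(Z_s)\Lambda(Z_s)^{\top}\mathrm{d}s$ is its quadratic variation. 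So the strategy is: (i) show that $\tfrac{1}{T}\langle M\rangle_T$ has a deterministic, invertible almost sure limit $\mathcal{V}^{-1}$; (ii) apply the CLT for continuous martingales to obtain the weak limit of $\tfrac{1}{\sqrt{T}}M_T$; (iii) combine both via Slutsky's lemma.

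For step (i), I would reuse the integrability computations already performed in the proof of Theorem \ref{consistency Theorem}: under $a_1>\sigma_{11}^2/2$ and $a_2>\sigma_{12}^2/2$ the expectations of the relevant functionals of $Y_\infty$ (including negative moments of $Y_\infty^{(1)}$ and $Y_\infty^{(2)}$, obtained from Ben Alaya--Kebaier and from the comparison theorem) are finite, so that Theorem \ref{ergodicity theorem} applied entrywise yields
\begin{equation*}
\tfrac{1}{T}\langle M\rangle_T\;\stackrel{a.s.}{\longrightarrow}\;\mathbb{E}\!\left(\Lambda(Z_\infty)\mathrm{R}^{-1}(Z_\infty)\Lambda(Z_\infty)^{\top}\right)\;=:\;\mathcal{V}^{-1}.
\end{equation*}
The invertibility of this limit was already established at the end of the consistency proof, using the positive definiteness of $\mathrm{R}(Z_\infty)$ combined with the fact that $y^{\top}\Lambda(Z_\infty)$ vanishes on a null set only, thanks to the density of $Z_\infty$.

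For step (ii), I would invoke a multidimensional central limit theorem for continuous local martingales (a version of the theorem recalled in the appendix, cf.\ \cite{Lipster}): since $M$ has continuous paths and the normalized quadratic characteristic $\tfrac{1}{T}\langle M\rangle_T$ converges almost surely to the \emph{deterministic} symmetric positive definite matrix $\mathcal{V}^{-1}$, the Lindeberg condition is automatically satisfied, and
\begin{equation*}
\tfrac{1}{\sqrt{T}}\,M_T\;\stackrel{\mathcal{D}}{\longrightarrow}\;\mathcal{N}_9\!\left(0,\mathcal{V}^{-1}\right),\qquad T\to\infty.
\end{equation*}
For step (iii), Slutsky's lemma combined with the continuous mapping theorem yields
\begin{equation*}
\sqrt{T}(\hat{\tau}_T-\tau)\;\stackrel{\mathcal{D}}{\longrightarrow}\;\mathcal{V}\cdot \mathcal{N}_9(0,\mathcal{V}^{-1})\;\sim\;\mathcal{N}_9(0,\mathcal{V}\,\mathcal{V}^{-1}\mathcal{V}^{\top})\;=\;\mathcal{N}_9(0,\mathcal{V}),
\end{equation*}
using the symmetry of $\mathcal{V}$.

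The main obstacle is step (ii): one has to verify the hypotheses of the continuous martingale CLT carefully in the multivariate setting, in particular that $M$ is a true (square integrable) martingale, not only a local one. This is where the strict inequalities $a_1>\sigma_{11}^2/2$ and $a_2>\sigma_{12}^2/2$ are crucial: they guarantee finiteness of the negative moments $\mathbb{E}(1/Y_\infty^{(i)})$, hence of each $\mathbb{E}\!\left(\langle M^{(p)}\rangle_\infty\right)$ when the process is started from the stationary distribution, and by stationarity $\mathbb{E}(\langle M^{(p)}\rangle_T)=T\,\mathbb{E}(\langle M^{(p)}\rangle_\infty)<\infty$ for each $p\in\{1,\ldots,9\}$. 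Once this integrability is in place, the CLT applies componentwise via the Cramér--Wold device (testing $c^{\top}M_T$ for arbitrary $c\in\mathbb{R}^9$) and delivers the claimed Gaussian limit.
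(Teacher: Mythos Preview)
Your proposal is correct and follows essentially the same route as the paper: the same decomposition $\sqrt{T}(\hat{\tau}_T-\tau)=(T^{-1}\langle M\rangle_T)^{-1}(T^{-1/2}M_T)$, the same ergodic limit for the bracket, and a martingale CLT for $T^{-1/2}M_T$. The only cosmetic difference is that the paper invokes directly the multivariate van Zanten CLT (Theorem~\ref{CLT Van Zanten}) with $Q(t)=t^{-1/2}\mathbf{I}_9$, rather than reducing to the scalar case via Cram\'er--Wold as you suggest; either variant works here since the limiting bracket is deterministic.
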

	\begin{proof}
		Using the equation $\eqref{error}$, one can write
		\begin{equation*}
			\begin{aligned}
				\sqrt{T}(\hat{\tau}_T -\tau)= \left(\frac{1}{T}\langle M\rangle_T\right)^{-1}\frac{1}{\sqrt{T}}M_T.
			\end{aligned}
		\end{equation*}
		On one hand, we have 
		\begin{equation*}
			\left(\frac{1}{T}\langle M\rangle_T\right)^{-1}\stackrel{a.s.}{\longrightarrow} \left(\mathbb{E}(\langle M\rangle_{\infty})\right)^{-1}=: \mathcal{V},\quad \text{as }T\to\infty.
		\end{equation*}
		On the other hand, $M$ is a Brownian martingale with quadratic variation $\langle M\rangle$. Hence,
		by Theorem \ref{CLT Van Zanten}, we deduce that
		\begin{equation*}
			\frac{1}{\sqrt{T}}M_T\stackrel{\mathcal{D}}{\longrightarrow}\mathcal{N}_9(0,\mathcal{V}^{-1}),\quad \text{as }T\to\infty.
		\end{equation*}
  This completes the proof.
	\end{proof}
\section{Conditional least squares estimation}
We consider the diffusion process $Z$ unique strong solution of the SDE $\eqref{model0 Z}$. The following discussion is about the construction of a CLSE for the drift parameter $\tau=\left(a_1,b_{11},a_2,b_{21},b_{22},m,\kappa_1,\kappa_2,\theta\right)^{{\top}}$ based on continuous-time observation of $(Z_t)_{t\in[0,T]}$, for some $T\in\R_{++}$. At first, we consider the CLSE $\check{\tau}_{T,N}$ based on the following discretizated process $(Y_{\frac{i}{N}},X_{\frac{i}{N}})_{i\in\lbrace 0,1,\ldots,\lfloor NT \rfloor\rbrace}$, for $N\in\N^*$ and it is obtained by solving the extremum problem
\begin{equation}
	\begin{aligned}
	\check{\tau}_{T,N}:=&\underset{\tau\in \R^{9}}{\arg\min}\displaystyle\sum_{i=1}^{ \lfloor NT \rfloor}\left[\left(Y_{\frac{i}{N}}^{(1)}-\mathbb{E}\left( Y_{\frac{i}{N}}^{(1)}\vert \mathcal{F}_{\frac{i-1}{N}}\right)\right)^2+\left(Y_{\frac{i}{N}}^{(2)}-\mathbb{E}\left(Y_{\frac{i}{N}}^{(2)}\vert \mathcal{F}_{\frac{i-1}{N}}\right)\right)^2+\left(X_{\frac{i}{N}}-\mathbb{E}\left( X_{\frac{i}{N}}\vert \mathcal{F}_{\frac{i-1}{N}}\right)\right)^2\right].
	\end{aligned}
\label{extremum problem}
\end{equation}
Thanks to relations \eqref{Y1 expression}, \eqref{Y2 expression} and \eqref{X expression}, for all $i\in \N^*$, we get
$\mathbb{E}\left( Y_{\frac{i}{N}}^{(1)}\vert \mathcal{F}_{\frac{i-1}{N}}\right)=e^{-{\frac{b_{11}}{N}}}Y_{\frac{i-1}{N}}^{(1)}+a_1\displaystyle\int_0^{\frac{1}{N}} e^{-b_{11}u}\mathrm{d}u,
$
\begin{align*}
    \mathbb{E}\left( Y_{\frac{i}{N}}^{(2)}\vert \mathcal{F}_{\frac{i-1}{N}}\right)=&e^{-\frac{b_{22}}{N}}Y^{(2)}_{\frac{i-1}{N}}+a_2\displaystyle\int_0^{\frac{1}{N}} e^{-b_{22}u}\mathrm{d}u-b_{21}\displaystyle\int_0^{\frac{1}{N}} e^{b_{22} (u-\frac{1}{N})}e^{-b_{11} u}\, \mathrm{d}u\,Y^{(1)}_\frac{i-1}{N}\\
  &-b_{21}a_1\displaystyle\int_0^{\frac{1}{N}} e^{b_{22} (u-\frac{1}{N})}\displaystyle\int_0^{u} e^{-b_{11}(u-v)}\mathrm{d}v\,\mathrm{d}u,
\end{align*}

and 
\begin{align*}
\mathbb{E}\left( X_{\frac{i}{N}}\vert \mathcal{F}_{\frac{i-1}{N}}\right)=&e^{-{\frac{1}{N}}\theta}X_{\frac{i-1}{N}}+ m\displaystyle\int_0^{\frac{1}{N}} e^{-\theta u}  \mathrm{d}u-\kappa^{{\top}}\displaystyle\int_0^{\frac{1}{N}} e^{\theta (u-{\frac{1}{N}})} e^{-b u}\,\mathrm{d}u\,Y_{\frac{i-1}{N}}\\&-\kappa^{{\top}}\displaystyle\int_0^{\frac{1}{N}} e^{\theta (u-{\frac{1}{N}})}\int_0^{u} e^{-b(u-v)}\mathrm{d}v\,\mathrm{d}u\,a.
\end{align*}
Consequently, by combining the relation \eqref{extremum problem} with the above explicit expressions of conditional expectations, we get 
\begin{align*}
		\check{\tau}_{T,N}=\underset{\tau\in \R^{9}}{\arg\min}\displaystyle\sum_{i=1}^{ \lfloor NT \rfloor}&\left[\left(Y_{\frac{i}{N}}^{(1)}-Y_{\frac{i-1}{N}}^{(1)}-\left( \tilde{a}_1- \tilde{b}_{11}Y_{\frac{i-1}{N}}^{(1)} \right)\right)^2+(Y_{\frac{i}{N}}^{(2)}-Y_{\frac{i-1}{N}}^{(2)}-\left( \tilde{a}_2- \tilde{b}_{21}Y_{\frac{i-1}{N}}^{(1)}-\tilde{b}_{22}Y_{\frac{i-1}{N}}^{(2)} \right)\right)^2
		\\&\left.+\left(X_{\frac{i}{N}}-X_{\frac{i-1}{N}}-\left( \tilde{m} - \tilde{\kappa}^{{\top}}Y_{\frac{i-1}{N}} -\tilde{\theta} X_{\frac{i-1}{N}}  \right)\right)^2\right],
\end{align*}
with $\tilde{a}_1:=a_1\displaystyle\int_0^{\frac{1}{N}} e^{-b_{11}u}\mathrm{d}u$, $\tilde{b}_{11}:=1-e^{-\frac{b_{11}}{N}}$, $\tilde{a}_2:=a_2\displaystyle\int_0^{\frac{1}{N}} e^{-b_{22}u}\mathrm{d}u -b_{21}a_1\displaystyle\int_0^{\frac{1}{N}} e^{b_{22} (u-\frac{1}{N})}\displaystyle\int_0^{u} e^{-b_{11}(u-v)}\mathrm{d}v\,\mathrm{d}u$, $\tilde{b}_{21}:=b_{21}\displaystyle\int_0^{\frac{1}{N}} e^{b_{22} (u-\frac{1}{N})}e^{-b_{11} u}\, \mathrm{d}u$, $\tilde{b}_{22}:=1-e^{-\frac{b_{22}}{N}}$, $\tilde{m}:=m\displaystyle\int_0^{\frac{1}{N}} e^{-\theta u}\,\mathrm{d}u-\kappa^{{\top}}\displaystyle\int_0^{\frac{1}{N}} e^{\theta (u-{\frac{1}{N}})}\int_0^{u} e^{-b(u-v)}\mathrm{d}v\,\mathrm{d}u\,a$, $\tilde{\kappa}^{{\top}}:=\kappa^{{\top}}\displaystyle\int_0^{\frac{1}{N}} e^{\theta (u-{\frac{1}{N}})} e^{-b u}\,\mathrm{d}u$ and $\tilde{\theta}:=1-e^{-\frac{\theta}{N}}$. Let the function $g_N$ defined, for all $N\in\N^*$, by $$g_N(a_1,b_{11},a_2,b_{21},b_{22},m,\kappa_1,\kappa_2,\theta)=(\tilde{a}_1,\tilde{b}_{11},\tilde{a}_2,\tilde{b}_{21},\tilde{b}_{22},\tilde{m},\tilde{\kappa}_1,\tilde{\kappa}_2,\tilde{\theta})$$
 and let us introduce the vector $(\tilde{a}_1^{T,N},\tilde{b}_{11}^{T,N},\tilde{a}_2^{T,N},\tilde{b}_{21}^{T,N},\tilde{b}_{22}^{T,N},\tilde{m}^{T,N},\tilde{\kappa}_1^{T,N},\tilde{\kappa}_2^{T,N},\tilde{\theta}^{T,N})^{{\top}}$ as the CLSE associated to the parameter vector $(\tilde{a}_1,\tilde{b}_{11},\tilde{a}_2,\tilde{b}_{21},\tilde{b}_{22},\tilde{m},\tilde{\kappa}_1,\tilde{\kappa}_2,\tilde{\theta})^{{\top}}$.
 Then, it can be observed that we have
\begin{equation}
	(\tilde{a}_1^{T,N},\tilde{b}_{11}^{T,N})=\underset{(\tilde{a}_1,\tilde{b}_{11})\in \R^2}{\arg\min}\displaystyle\sum_{i=1}^{ \lfloor NT \rfloor}\left(Y_{\frac{i}{N}}^{(1)}-Y_{\frac{i-1}{N}}^{(1)}-\left( \tilde{a}_1- \tilde{b}_{11}Y_{\frac{i-1}{N}}^{(1)} \right)\right)^2,
	\label{c d}
\end{equation}
\begin{equation}
(\tilde{a}_2^{T,N},\tilde{b}_{21}^{T,N},\tilde{b}_{22}^{T,N})=\underset{(\tilde{a}_2,\tilde{b}_{21},\tilde{b}_{22})\in \R^3}{\arg\min}\displaystyle\sum_{i=1}^{ \lfloor NT \rfloor}\left(Y_{\frac{i}{N}}^{(2)}-Y_{\frac{i-1}{N}}^{(2)}-\left( \tilde{a}_2- \tilde{b}_{21}Y_{\frac{i-1}{N}}^{(1)}-\tilde{b}_{22}Y_{\frac{i-1}{N}}^{(2)} \right)\right)^2
\label{c' d'}
\end{equation}
and
\begin{equation}
	\begin{aligned}
		(\tilde{m}^{T,N},\tilde{\kappa}_1^{T,N},\tilde{\kappa}_2^{T,N},\tilde{\theta}^{T,N})=\underset{(\tilde{m},\tilde{\kappa}_1,\tilde{\kappa}_2,\tilde{\theta})\in \R^{4}}{\arg\min}\displaystyle\sum_{i=1}^{ \lfloor NT \rfloor}\left(X_{\frac{i}{N}}-X_{\frac{i-1}{N}}-\left( \tilde{m} - \tilde{\kappa}^{{\top}}Y_{\frac{i-1}{N}} -\tilde{\theta} X_{\frac{i-1}{N}}  \right)\right)^2.
	\end{aligned}
	\label{delta varepsilon zeta}
\end{equation}

Hence, in order solve first the extremum problem $\eqref{c d}$, it is enough to solve at first the system
\begin{equation}\label{derivative 1 equals zero}
	\begin{cases}
		\displaystyle\sum_{i=1}^{ \lfloor NT \rfloor} Y_{\frac{i}{N}}^{(1)}-Y_{\frac{i-1}{N}}^{(1)}-\left( \tilde{a}_1- \tilde{b}_{11}Y_{\frac{i-1}{N}}^{(1)} \right)=0,\\
		\displaystyle\sum_{i=1}^{ \lfloor NT \rfloor} \left(Y_{\frac{i}{N}}^{(1)}-Y_{\frac{i-1}{N}}^{(1)}-\left( \tilde{a}_1- \tilde{b}_{11}Y_{\frac{i-1}{N}}^{(1)} \right)\right)Y^{(1)}_{\frac{i-1}{N}}=0,
	\end{cases}
\end{equation}
which can be written as follows
$\Gamma^{(1)}_{T,N}\begin{bmatrix}
		\tilde{a}_1\\\tilde{b}_{11}
	\end{bmatrix}=\phi^{(1)}_{T,N},
$
with
\begin{equation*}
	\Gamma^{(1)}_{T,N}=\begin{bmatrix}
		\lfloor NT \rfloor& -\displaystyle\sum_{i=1}^{ \lfloor NT \rfloor}Y_{\frac{i-1}{N}}\\
		-\displaystyle\sum_{i=1}^{ \lfloor NT \rfloor}Y_{\frac{i-1}{N}}& \displaystyle\sum_{i=1}^{ \lfloor NT \rfloor}Y_{\frac{i-1}{N}}^2
	\end{bmatrix}\quad\text{and}\quad \phi^{(1)}_{T,N}=\begin{bmatrix}
		Y_{\frac{\lfloor NT \rfloor}{N}}-Y_{0}\\
		-\displaystyle\sum_{i=1}^{ \lfloor NT \rfloor} \left(Y_{\frac{i}{N}}-Y_{\frac{i-1}{N}}\right)Y_{\frac{i-1}{N}}
	\end{bmatrix}.
\end{equation*}
Then, provided that $\lfloor NT \rfloor \displaystyle\sum_{i=1}^{ \lfloor NT \rfloor}Y_{\frac{i-1}{N}}^2 -\left(\displaystyle\sum_{i=1}^{ \lfloor NT \rfloor}Y_{\frac{i-1}{N}}\right)^2>0$, we deduce that
\begin{equation*}
\begin{bmatrix}
	\tilde{a}_1^{T,N}\\\tilde{b}_{11}^{T,N}
\end{bmatrix}=\left(\Gamma^{(1)}_{T,N}\right)^{-1}\phi^{(1)}_{T,N}.	
\end{equation*}
For the second extremum problem $\eqref{c' d'}$, it is sufficient to solve the following system
\begin{equation}\label{derivative 1 prime equals 0}
	\begin{cases}
		\displaystyle\sum_{i=1}^{ \lfloor NT \rfloor} (Y_{\frac{i}{N}}^{(2)}-Y_{\frac{i-1}{N}}^{(2)}-\left( \tilde{a}_2- \tilde{b}_{21}Y_{\frac{i-1}{N}}^{(1)}-\tilde{b}_{22}Y_{\frac{i-1}{N}}^{(2)} \right) =0\\
		\displaystyle\sum_{i=1}^{ \lfloor NT \rfloor}\left((Y_{\frac{i}{N}}^{(2)}-Y_{\frac{i-1}{N}}^{(2)}-\left( \tilde{a}_2- \tilde{b}_{21}Y_{\frac{i-1}{N}}^{(1)}-\tilde{b}_{22}Y_{\frac{i-1}{N}}^{(2)} \right)\right)Y_{\frac{i-1}{N}}^{(1)}=0\\
		\displaystyle\sum_{i=1}^{ \lfloor NT \rfloor}\left( (Y_{\frac{i}{N}}^{(2)}-Y_{\frac{i-1}{N}}^{(2)}-\left( \tilde{a}_2- \tilde{b}_{21}Y_{\frac{i-1}{N}}^{(1)}-\tilde{b}_{22}Y_{\frac{i-1}{N}}^{(2)} \right)\right)Y^{(2)}_{\frac{i-1}{N}}=0.
	\end{cases}
\end{equation}
The above system can be written in the following matrix form
$
	\Gamma^{(2)}_{T,N}\begin{bmatrix}
		\tilde{a}_2\\
		\tilde{b}_{21}\\
		\tilde{b}_{22}
	\end{bmatrix} = \phi^{(2)}_{T,N},
$
with
\begin{equation*}
	\Gamma^{(2)}_{T,N}=\begin{bmatrix}
		\lfloor NT \rfloor & -\displaystyle\sum_{i=1}^{ \lfloor NT \rfloor}Y^{(1)}_{\frac{i-1}{N}}  &-\displaystyle\sum_{i=1}^{ \lfloor NT \rfloor}Y^{(2)}_{\frac{i-1}{N}}\\
		-\displaystyle\sum_{i=1}^{ \lfloor NT \rfloor}Y^{(1)}_{\frac{i-1}{N}} & \displaystyle\sum_{i=1}^{ \lfloor NT \rfloor}\left(Y^{(1)}_{\frac{i-1}{N}}\right)^2 & \displaystyle\sum_{i=1}^{ \lfloor NT \rfloor} Y^{(1)}_{\frac{i-1}{N}}Y^{(2)}_{\frac{i-1}{N}}\\
		-\displaystyle\sum_{i=1}^{ \lfloor NT \rfloor}Y^{(2)}_{\frac{i-1}{N}} &  \displaystyle\sum_{i=1}^{ \lfloor NT \rfloor} Y^{(1)}_{\frac{i-1}{N}}Y^{(2)}_{\frac{i-1}{N}} & \displaystyle\sum_{i=1}^{ \lfloor NT \rfloor} \left(Y^{(2)}_{\frac{i-1}{N}}\right)^2
	\end{bmatrix}\quad
	\text{and}\quad \phi^{(2)}_{T,N}=\begin{bmatrix}
		Y^{(2)}_{\frac{\lfloor NT \rfloor}{N}}-Y^{(2)}_{0}\\
		-\displaystyle\sum_{i=1}^{ \lfloor NT \rfloor} Y^{(1)}_{\frac{i-1}{N}}\left(Y^{(2)}_{\frac{i}{N}}-Y^{(2)}_{\frac{i-1}{N}}\right)\\
		-\displaystyle\sum_{i=1}^{ \lfloor NT \rfloor} Y^{(2)}_{\frac{i-1}{N}}\left(Y^{(2)}_{\frac{i}{N}}-Y^{(2)}_{\frac{i-1}{N}}\right)
	\end{bmatrix}.
\end{equation*}
Then, provided the invertibility of $\Gamma^{(2)}_{T,N}$, we deduce that
\begin{equation*}
	\begin{bmatrix}
		\tilde{a}_2^{T,N}\\
		\tilde{b}_{21}^{T,N}\\
		\tilde{b}_{22}^{T,N}
	\end{bmatrix}=\left(\Gamma^{(2)}_{T,N}\right)^{-1}\phi^{(2)}_{T,N}.
\end{equation*}
Next, in order to solve the third extremum problem $\eqref{delta varepsilon zeta}$, it is enough to solve the system
\begin{equation}\label{derivative 2 equals 0}
	\begin{cases}
		\displaystyle\sum_{i=1}^{ \lfloor NT \rfloor} X_{\frac{i}{N}}-X_{\frac{i-1}{N}}-\left( \tilde{m} - \tilde{\kappa}^{{\top}}Y_{\frac{i-1}{N}} -\tilde{\theta} X_{\frac{i-1}{N}}  \right) =0\\
		\displaystyle\sum_{i=1}^{ \lfloor NT \rfloor}\left(X_{\frac{i}{N}}-X_{\frac{i-1}{N}}-\left( \tilde{m} - \tilde{\kappa}^{{\top}}Y_{\frac{i-1}{N}} -\tilde{\theta} X_{\frac{i-1}{N}}  \right)\right)Y_{\frac{i-1}{N}}=\mathbf{0}_2\\
		\displaystyle\sum_{i=1}^{ \lfloor NT \rfloor}\left(X_{\frac{i}{N}}-X_{\frac{i-1}{N}}-\left( \tilde{m} - \tilde{\kappa}^{{\top}}Y_{\frac{i-1}{N}} -\tilde{\theta} X_{\frac{i-1}{N}}\right) \right)X_{\frac{i-1}{N}}=0.
	\end{cases}
\end{equation}
The above system can be written in the following matrix form
$
	\Gamma^{(3)}_{T,N}\begin{bmatrix}
		\tilde{m}\\
		\tilde{\kappa}\\
		\tilde{\theta}
	\end{bmatrix} = \phi^{(3)}_{T,N},
$
with
\begin{equation*}
	\Gamma^{(3)}_{T,N}=\begin{bmatrix}
		\lfloor NT \rfloor & -\displaystyle\sum_{i=1}^{ \lfloor NT \rfloor}Y_{\frac{i-1}{N}}^{{\top}}  &-\displaystyle\sum_{i=1}^{ \lfloor NT \rfloor}X_{\frac{i-1}{N}}\\
		-\displaystyle\sum_{i=1}^{ \lfloor NT \rfloor}Y_{\frac{i-1}{N}} & \displaystyle\sum_{i=1}^{ \lfloor NT \rfloor}Y_{\frac{i-1}{N}}Y_{\frac{i-1}{N}}^{{\top}} & \displaystyle\sum_{i=1}^{ \lfloor NT \rfloor} X_{\frac{i-1}{N}}Y_{\frac{i-1}{N}}\\
		-\displaystyle\sum_{i=1}^{ \lfloor NT \rfloor}X_{\frac{i-1}{N}} &  \displaystyle\sum_{i=1}^{ \lfloor NT \rfloor} X_{\frac{i-1}{N}}Y_{\frac{i-1}{N}}^{{\top}} & \displaystyle\sum_{i=1}^{ \lfloor NT \rfloor} X_{\frac{i-1}{N}}^2
	\end{bmatrix}
	\quad \text{and}\quad \phi^{(3)}_{T,N}=\begin{bmatrix}
		X_{\frac{\lfloor NT \rfloor}{N}}-X_{0}\\
		-\displaystyle\sum_{i=1}^{ \lfloor NT \rfloor} \left(X_{\frac{i}{N}}-X_{\frac{i-1}{N}}\right)Y_{\frac{i-1}{N}}\\
		-\displaystyle\sum_{i=1}^{ \lfloor NT \rfloor} X_{\frac{i-1}{N}}\left(X_{\frac{i}{N}}-X_{\frac{i-1}{N}}\right)
	\end{bmatrix}.
\end{equation*}
Then, provided the invertibility of $\Gamma^{(3)}_{T,N}$, we deduce that
\begin{equation*}
	\begin{bmatrix}
		\tilde{m}^{T,N}\\
		\tilde{\kappa}^{T,N}\\
		\tilde{\theta}^{T,N}
\end{bmatrix}=\left(\Gamma^{(3)}_{T,N}\right)^{-1}\phi^{(3)}_{T,N}.
\end{equation*}
Consequently, it remains to prove the invertibility of the matrices $\Gamma^{(1)}_{T,N},\ \Gamma^{(2)}_{T,N}$ and $\Gamma^{(3)}_{T,N}$. In our case, it is enough to show that they are almost surely positive definite. Let $x\in\R^2\setminus\lbrace \mathbf{0}_2 \rbrace$, $y\in\R^3\setminus\lbrace \mathbf{0}_3 \rbrace$ and $z\in\R^{4}\setminus\lbrace \mathbf{0}_{4} \rbrace$. From one side, we have
\begin{equation}
	\begin{bmatrix}
		x_1\\
		x_2
	\end{bmatrix}^{{\top}}\Gamma^{(1)}_{T,N} \begin{bmatrix}
		x_1\\
		x_2
	\end{bmatrix} =\displaystyle\sum_{i=1}^{ \lfloor NT \rfloor}\begin{bmatrix}
		x_1\\
		x_2
	\end{bmatrix}^{{\top}}
	\begin{bmatrix}
		1\\
		- Y^{(1)}_{\frac{i-1}{N}}
	\end{bmatrix}
	\begin{bmatrix}
		1\\
		-Y^{(1)}_{\frac{i-1}{N}}
	\end{bmatrix}^{{\top}}
	\begin{bmatrix}
		x_1\\
		x_2
	\end{bmatrix}= \displaystyle\sum_{i=1}^{ \lfloor NT \rfloor} (x_1-x_2 Y^{(1)}_{\frac{i-1}{N}})^2\geq 0.
\label{invertibility 1}
\end{equation}
From the other side,
$	\begin{bmatrix}
		x_1\\
		x_2
	\end{bmatrix}^{{\top}}\Gamma^{(1)}_{T,N} \begin{bmatrix}
		x_1\\
		x_2
	\end{bmatrix} =0\ \Leftrightarrow \ x_1-x_2 Y_{\frac{i-1}{N}}=0,$ for all $ i\in\lbrace 1,2,\ldots,\lfloor NT\rfloor\rbrace, 
$ which is impossible since $x\neq 0_2$ and, for each $i\in \lbrace 1,2,\ldots,\lfloor NT\rfloor\rbrace $, the distribution of $Y^{(1)}_{\frac{i-1}{N}}$ is absolutely continuous because its conditional distribution given $Y^{(1)}_0$ is absolutely continuous.
Likewise, we have
\begin{equation}
\begin{bmatrix}
		y_1\\
		y_2\\
  y_3
	\end{bmatrix}^{{\top}}\Gamma^{(2)}_{T,N} \begin{bmatrix}
		y_1\\
		y_2\\
  y_3
	\end{bmatrix} = \displaystyle\sum_{i=1}^{ \lfloor NT \rfloor} (y_1-y_2 Y^{(1)}_{\frac{i-1}{N}}-y_3 Y^{(2)}_{\frac{i-1}{N}})^2\geq 0.
\label{invertibility 2}
\end{equation}
In addition,
$
y^{{\top}}\Gamma^{(2)}_{T,N} y =0\ \Leftrightarrow\ y_1-y_2 Y_{\frac{i-1}{N}}-\tilde{y}^{{\top}}X_{\frac{i-1}{N}}=0_{3}$, for all  $i\in\lbrace 0,1,\ldots,\lfloor NT\rfloor\rbrace 
$
which is also impossible since $y\neq \mathbf{0}_{3}$ and, for each $i\in \lbrace 1,2,\ldots,\lfloor NT\rfloor\rbrace $, the distribution of $Y_{\frac{i-1}{N}}$ is absolutely continuous because its conditional distribution given $Y_0$ is absolutely continuous. Furthermore, by similar arguments, it is easy to check that $z^{{\top}}\Gamma^{(3)}_{T,N}z>0$. Now, we define the approximate CLSE of $\tau$ by 
\begin{equation*}
	\check{\tau}_{T,N}^{\mathrm{approx}}:=N(\tilde{a}_1^{T,N},\tilde{b}_{11}^{T,N},\tilde{a}_2^{T,N},\tilde{b}_{21}^{T,N},\tilde{b}_{22}^{T,N},\tilde{m}^{T,N},\tilde{\kappa}_1^{T,N},\tilde{\kappa}_2^{T,N},\tilde{\theta}^{T,N})^{{\top}}.
\end{equation*}
Thanks to the almost sure continuity of  $(Y_t,X_t)_{t\in \R_{+}}$ and by Proposition $4.44$ in Jacod and Shirayev \cite{Jacod}, we obtain
\begin{equation}
	\dfrac{1}{N}\Gamma^{(1)}_{T,N}\stackrel{a.s.}{\longrightarrow}\begin{bmatrix}
		T & -\displaystyle\int_0^T Y^{(1)}_s \mathrm{d}s\\[8pt]
		-\displaystyle\int_0^T Y^{(1)}_s \mathrm{d}s & \displaystyle\int_0^T \left(Y^{(1)}_s\right)^2 \mathrm{d}s
	\end{bmatrix}=:G_T^{(1)},\ 	\phi^{(1)}_{T,N}\stackrel{\mathbb{P}}{\longrightarrow}\begin{bmatrix}
	Y^{(1)}_T-Y^{(1)}_{0}\\
	-\displaystyle\int_0^T Y^{(1)}_s \mathrm{d}Y^{(1)}_s\end{bmatrix}:=f_T^{(1)},
\label{fT1 GT1}
\end{equation}
\begin{equation}
	\dfrac{1}{N}\Gamma^{(2)}_{T,N}\stackrel{a.s.}{\longrightarrow}\begin{bmatrix}
		T & -\displaystyle\int_0^T Y_s^{{\top}} \mathrm{d}s\\[8pt]
		-\displaystyle\int_0^T Y_s \mathrm{d}s & \displaystyle\int_0^T Y_s Y_s^{{\top}} \mathrm{d}s	\end{bmatrix}=:G_T^{(2)},\  \phi^{(2)}_{T,N}\stackrel{\mathbb{P}}{\longrightarrow}\begin{bmatrix}
	Y^{(2)}_{T}-Y^{(2)}_{0}\\[2pt]
	-\displaystyle\int_0^T Y_{s} \mathrm{d}Y^{(2)}_{s}
\end{bmatrix}:=f_T^{(2)},
\label{fT1' GT1'}
\end{equation}
and
\begin{equation}
	\dfrac{1}{N}\Gamma^{(3)}_{T,N}\stackrel{a.s.}{\longrightarrow}\begin{bmatrix}
		T & -\displaystyle\int_0^T Z_s^{{\top}} \mathrm{d}s\\[8pt]
		-\displaystyle\int_0^T Z_s \mathrm{d}s & \displaystyle\int_0^T Z_s Z_s^{{\top}}  \mathrm{d}s 
	\end{bmatrix}=:G_T^{(3)},\ \phi^{(3)}_{T,N}\stackrel{\mathbb{P}}{\longrightarrow}\begin{bmatrix}
	X_{T}-X_{0}\\[2pt]
	-\displaystyle\int_0^T Z_{s} \mathrm{d}X_{s}
\end{bmatrix}:=f_T^{(3)},
\label{fT2 GT2}
\end{equation}
as $N\to\infty$. Hence, provided the invertibility of $G_T^{(1)}$ and $G_T^{(2)}$ and using relations \eqref{fT1 GT1}, \eqref{fT1' GT1'} and \eqref{fT2 GT2}, the Slutsky's theorem yields
\begin{equation}
\check{\tau}_{T,N}^{\mathrm{approx}}\stackrel{\mathbb{P}}{\longrightarrow}G_T^{-1}f_T=:\check{\tau}_T,\quad \text{as } N\to\infty,
	\label{tau approx}
\end{equation}
where $G_T=\text{diag}\left(G_T^{(1)},G_T^{(2)},G_T^{(3)}\right)$ and $f_T=\begin{bmatrix}
    f_T^{(1)}\\f_T^{(2)}\\f_T^{(3)}
\end{bmatrix}$. Hence, by the same arguments used in the proof of the invertibility of the matrices $\Gamma^{(1)}_{T,N}$, $\Gamma^{(2)}_{T,N}$ and $\Gamma^{(3)}_{T,N}$ with replacing the sum from $0$ to $\lfloor NT \rfloor$ by the integral on $[0,T]$, it is easy to check that $G_T$ is invertible.  
Note that this approximate CLSE is constructed through the regular part of a first order Taylor approximation of the function $g_N$ at zero point. In the following lemma we prove that $\check{\tau}_T$ is also the limit of the CLSE $\check{\tau}_{T,N}$ introduced in \eqref{extremum problem}. Hence, $\check{\tau}_T$ is called the CLSE based on continuous-time observations.  
\begin{lemme}
	Let us consider the affine model $\eqref{model0 Z}$, then for each $T\in\R_{++}$, we have $$\check{\tau}_{T,N}:=(\check{a}_1^{T,N},\check{b}_{11}^{T,N},\check{a}_2^{T,N},\check{b}_{21}^{T,N},\check{b}_{22}^{T,N},\check{m}^{T,N},\check{\kappa}_1^{T,N},\check{\kappa}_2^{T,N},\check{\theta}^{T,N})\stackrel{\mathbb{P}}{\longrightarrow}\check{\tau}_T,\quad \text{as }N\to\infty.$$ 
	\label{lemma hat tau T N to hat tau}
\end{lemme}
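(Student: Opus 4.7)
The plan is to leverage the identity $\check{\tau}_{T,N}=g_N^{-1}(\tilde{\tau}^{T,N})$, where $\tilde{\tau}^{T,N}:=(\tilde{a}_1^{T,N},\tilde{b}_{11}^{T,N},\tilde{a}_2^{T,N},\tilde{b}_{21}^{T,N},\tilde{b}_{22}^{T,N},\tilde{m}^{T,N},\tilde{\kappa}_1^{T,N},\tilde{\kappa}_2^{T,N},\tilde{\theta}^{T,N})^{\top}$, together with the definition $\check{\tau}_{T,N}^{\mathrm{approx}}=N\tilde{\tau}^{T,N}$. Setting $h_N:=Ng_N$, these combine into $\check{\tau}_{T,N}=h_N^{-1}(\check{\tau}_{T,N}^{\mathrm{approx}})$ on the event where $\tilde{\tau}^{T,N}$ lies in the image of $g_N$. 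Since \eqref{tau approx} already provides $\check{\tau}_{T,N}^{\mathrm{approx}}\stackrel{\mathbb{P}}{\longrightarrow}\check{\tau}_T$, it suffices to prove that $h_N$ and $h_N^{-1}$ both converge to $\mathrm{id}_{\R^9}$ uniformly on every compact subset of $\R^9$, and then to pass from a deterministic estimate to a probabilistic conclusion via tightness.

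First, I will establish $h_N\to\mathrm{id}$ in $C^1$ on compacts. Inspecting each component of $g_N$ (all built from exponentials and iterated integrals over $[0,1/N]$), a Taylor expansion at $\tau=0$ yields $g_N(\tau)=\tau/N+R_N(\tau)$ where $R_N(\tau)$ and $DR_N(\tau)$ are bounded by $C_K/N^2$ uniformly for $\tau$ in any fixed compact $K\subset\R^9$; in particular $g_N(0)=\mathbf{0}_9$ and $Dg_N(0)=\tfrac{1}{N}\mathbf{I}_9$. Consequently $\sup_{\tau\in K}\|h_N(\tau)-\tau\|\to 0$ and $\sup_{\tau\in K}\|Dh_N(\tau)-\mathbf{I}_9\|\to 0$. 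Applying the inverse function theorem together with this uniform $C^1$ control (equivalently, a Banach fixed-point argument applied to $u\mapsto u-(h_N(u)-u)$ on a fixed ball), I obtain that for every compact $K\subset\R^9$ there exists $N_0(K)\in\N^*$ such that, for every $N\geq N_0(K)$, $h_N$ is a $C^1$-diffeomorphism of a neighborhood of $K$ onto a set containing $K$, and $\sup_{u\in K}\|h_N^{-1}(u)-u\|\to 0$ as $N\to\infty$.

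Since convergence in probability implies tightness, the sequence $(\check{\tau}_{T,N}^{\mathrm{approx}})_{N\in\N^*}$ is tight: for every $\varepsilon>0$ there exists a compact $K_\varepsilon\subset\R^9$ with $\mathbb{P}(\check{\tau}_{T,N}^{\mathrm{approx}}\in K_\varepsilon)>1-\varepsilon$ for all $N$. On the event $\{\check{\tau}_{T,N}^{\mathrm{approx}}\in K_\varepsilon\}$ and for $N\geq N_0(K_\varepsilon)$, the point $\tilde{\tau}^{T,N}=\check{\tau}_{T,N}^{\mathrm{approx}}/N$ belongs to the neighborhood of $\mathbf{0}_9$ on which $g_N$ is a diffeomorphism, so $\check{\tau}_{T,N}=h_N^{-1}(\check{\tau}_{T,N}^{\mathrm{approx}})$ and $\|\check{\tau}_{T,N}-\check{\tau}_{T,N}^{\mathrm{approx}}\|\leq\sup_{u\in K_\varepsilon}\|h_N^{-1}(u)-u\|\to 0$. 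Therefore $\check{\tau}_{T,N}-\check{\tau}_{T,N}^{\mathrm{approx}}\stackrel{\mathbb{P}}{\longrightarrow}\mathbf{0}_9$, which combined with \eqref{tau approx} gives the lemma.

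The main obstacle is the random nature of the argument at which $h_N^{-1}$ is evaluated: the purely deterministic statement $h_N^{-1}\to\mathrm{id}$ on compacts cannot be composed naively with a random variable. The role of the tightness extracted from $\check{\tau}_{T,N}^{\mathrm{approx}}\stackrel{\mathbb{P}}{\longrightarrow}\check{\tau}_T$ is precisely to confine the argument to a deterministic compact on an event of arbitrarily high probability, thereby bridging the uniform deterministic estimate on $h_N^{-1}$ with the required convergence in probability.
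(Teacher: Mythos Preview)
Your proposal is correct and follows essentially the same approach as the paper: both use the identity $\check{\tau}_{T,N}=g_N^{-1}(\tilde{\tau}^{T,N})$ together with the first-order structure $g_N(\tau)=\tau/N+O(1/N^2)$ to transfer the convergence \eqref{tau approx} from $\check{\tau}_{T,N}^{\mathrm{approx}}=N\tilde{\tau}^{T,N}$ to $\check{\tau}_{T,N}$. The paper writes out the nine components of $g_N^{-1}$ explicitly and then invokes componentwise Taylor expansions (referring to an earlier paper for the computations), whereas you package the same idea through the scaled map $h_N=Ng_N$, the inverse function theorem, and a tightness argument to localize the random input to a deterministic compact; this is slightly more abstract but otherwise equivalent.
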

\begin{proof}
Thanks to relation \eqref{tau approx}, it is easy to check that  
\begin{equation}
(\tilde{a}_1^{T,N},\tilde{b}_{11}^{T,N},\tilde{a}_2^{T,N},\tilde{b}_{21}^{T,N},\tilde{b}_{22}^{T,N},\tilde{m}^{T,N},\tilde{\kappa}_1^{T,N},\tilde{\kappa}_2^{T,N},\tilde{\theta}^{T,N})\stackrel{\mathbb{P}}{\longrightarrow} \mathbf{0}_9,\quad \text{as } N\to\infty.
	\label{tau hat T N goes to zero}
\end{equation}
Hence, since the function $g_N$ admits an inverse function on the neighborhood of the zero point, we deduce that $$g_N^{-1}(\tilde{a}_1^{T,N},\tilde{b}_{11}^{T,N},\tilde{a}_2^{T,N},\tilde{b}_{21}^{T,N},\tilde{b}_{22}^{T,N},\tilde{m}^{T,N},\tilde{\kappa}_1^{T,N},\tilde{\kappa}_2^{T,N},\tilde{\theta}^{T,N})=\check{\tau}_{T,N},$$ with probability tending to one as $N\to \infty$. Namely, we get $$\check{b}_{11}^{T,N}=-N\log(1-\tilde{b}_{11}^{T,N}),\quad\check{b}_{22}^{T,N}=-N\log(1-\tilde{b}_{22}^{T,N}),\quad\check{\theta}_{T,N}=-N\log(I_n-\tilde{\theta}_{T,N}),$$
$$\check{a}_1^{T,N}=\left(\int_0^{\frac{1}{N}}e^{-\check{b}_{11}^{T,N}u}\mathrm{d}u\right)^{-1}\tilde{a}_1^{T,N},\quad \check{b}_{21}^{T,N}=\left(\displaystyle\int_0^{\frac{1}{N}} e^{\check{b}_{22}^{T,N} (u-{\frac{1}{N}})} e^{-\check{b}_{11}^{T,N} u}\mathrm{d}u \right)^{-1}\tilde{b}_{21}^{T,N},$$
$$\check{a}_2^{T,N}=\left(\displaystyle\int_0^{\frac{1}{N}} e^{-\check{b}_{22}^{T,N} u} \mathrm{d}u \right)^{-1}\left(\tilde{a}_2^{T,N}+\check{a}_1^{T,N}\displaystyle\int_0^{\frac{1}{N}} e^{(u-{\frac{1}{N}})\check{b}_{22}^{T,N}}\check{b}_{21}^{T,N}\left(\displaystyle\int_0^{u} e^{-\check{b}_{11}^{T,N}(u-v)}\mathrm{d}v\right)\mathrm{d}u\right),$$
$$\left(\check{\kappa}^{T,N}\right)^{{\top}}=\left(\tilde{\kappa}^{T,N}\right)^{{\top}}\left(\displaystyle\int_0^{\frac{1}{N}} e^{\check{\theta}^{T,N} (u-{\frac{1}{N}})} e^{-\check{b}^{T,N} u}\mathrm{d}u \right)^{-1}$$ and $$\check{m}^{T,N}=\left(\displaystyle\int_0^{\frac{1}{N}} e^{-\check{\theta}^{T,N} u} \mathrm{d}u \right)^{-1}\left(\tilde{m}^{T,N}+\left(\check{\kappa}^{T,N}\right)^{{\top}}\displaystyle\int_0^{\frac{1}{N}} e^{\check{\theta}_{T,N}(u-{\frac{1}{N}})}\displaystyle\int_0^{u} e^{-\check{b}^{T,N}(u-v)}\mathrm{d}v\,\mathrm{d}u\right)\check{a}^{T,N}.$$
Consequently, similarly to the proof of Lemma 3.1 in \cite{Dahbi2}, using some Taylor approximations at zero point, we can deduce easily the desired result.
\end{proof}
In order to study asymptotic properties of the continuous CLSE, we need first to write the relative error term. Using the SDE \eqref{model0 Z}, we can easily deduce the following equalities $f_T^{(1)}=G_T^{(1)}\begin{bmatrix}
		a_1\\
		b_{11}
	\end{bmatrix}+h_T^{(1)}$, 
 $f_T^{(2)}=G_T^{(2)}\begin{bmatrix}
		a_2\\
		b_{21}\\
		b_{22}
	\end{bmatrix}+h_T^{(2)}\ \text{and}\  f_T^{(3)}=G_T^{(3)}\begin{bmatrix}
		m\\
		\kappa\\
		\theta
	\end{bmatrix}+h_T^{(3)},$ 
where $h_T^{(1)}=\begin{bmatrix}
	\sigma_{11}\displaystyle\int_0^T\sqrt{Y^{(1)}_s}\mathrm{d}B_s^1\\
	-\sigma_{11}\displaystyle\int_0^TY^{(1)}_s \sqrt{Y^{(1)}_s}\mathrm{d}B_s^1
\end{bmatrix}$,\\  $h_T^{(2)}=\begin{bmatrix} \sigma_{12}\displaystyle\int_0^T\sqrt{Y^{(2)}_s}\mathrm{d}B_s^2\\[8pt]
	-\sigma_{12}\displaystyle\int_0^TY_s\sqrt{Y^{(2)}_s}\mathrm{d}B_s^2
\end{bmatrix}\ \text{and}\ h_T^{(3)}=\begin{bmatrix} \displaystyle\int_0^T \sigma_2^{{\top}}S(Y_s)(\rho \mathrm{~d}B_s+\bar{\rho}\mathrm{~d}W_s)\\[8pt]
	- \displaystyle\int_0^T Z_s\sigma_2^{{\top}}S(Y_s)(\rho \mathrm{~d}B_s+\bar{\rho}\mathrm{~d}W_s)
\end{bmatrix}$. 

Therefore, we get 
\begin{equation}
	\check{\tau}_T-\tau=G_T^{-1}h_T,\quad\text{where } h_T:=\begin{bmatrix}
h_T^{(1)}\\
h_T^{(2)}\\
h_T^{(3)}
\end{bmatrix}.
	\label{err}
\end{equation}
\subsection{Consistency of the CLSE}
\begin{theoreme}\label{consistency CLSE b>0}
	Let us consider the affine diffusion model $\eqref{model0 Z}$ with initial random values independent of $(B^1_t,B_t^2,W_t)_{t\in\R_{+}}$ satisfying $\mathbb{P}(Y_0^{(1)}\in\R_{++})=1$ and $\mathbb{P}(Y_0^{(2)}\in\R_{++})=1$. If $a_1,b_{11},b_{22},\theta\in\R_{++}$ and $a_2\in(\frac{\sigma_{12}^2}{2},\infty)$, then the $CLSE$ of $\tau$ is strongly consistent, i.e., $\check{\tau}_T \stackrel{a.s.}{\longrightarrow} \tau$, as $T\to \infty$. 
\end{theoreme}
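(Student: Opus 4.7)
The starting point is the error decomposition \eqref{err}, which yields
\begin{equation*}
\check{\tau}_T-\tau=G_T^{-1}h_T=\left(\dfrac{1}{T}G_T\right)^{-1}\dfrac{1}{T}h_T.
\end{equation*}
The plan is therefore to establish two a.s. convergences as $T\to\infty$: first, that $T^{-1}G_T$ converges a.s.\ to a (deterministic) invertible limit $G_\infty$, and second, that $T^{-1}h_T\to \mathbf{0}_9$ a.s.; Slutsky then gives $\check{\tau}_T-\tau\to \mathbf{0}_9$ a.s. Throughout, the engine is the ergodic limit \eqref{ergodic} of Theorem \ref{ergodicity theorem}, which applies under the assumptions $a_1>0$, $a_2>\sigma_{12}^2/2$, $b_{11},b_{22},\theta\in\R_{++}$.

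For the convergence of $T^{-1}G_T$, I would apply \eqref{ergodic} componentwise to the functions $f(z)=z_i$ and $f(z)=z_iz_j$ for $(i,j)\in\{1,2,3\}^2$, so that
\begin{equation*}
\dfrac{1}{T}G_T^{(k)}\stackrel{a.s.}{\longrightarrow}G_\infty^{(k)}\ \text{with}\ G_\infty^{(1)}=\begin{bmatrix}1&-\mathbb{E}(Y_\infty^{(1)})\\-\mathbb{E}(Y_\infty^{(1)})&\mathbb{E}((Y_\infty^{(1)})^2)\end{bmatrix},
\end{equation*}
and analogous expressions for $G_\infty^{(2)}$, $G_\infty^{(3)}$. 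The integrability $\mathbb{E}\|Z_\infty\|^2<\infty$ needed here follows from standard affine estimates (e.g., \cite[Theorem B.2]{Lipster}), together with the moment bounds already exploited in the MLE proof. The invertibility of $G_\infty=\mathrm{diag}(G_\infty^{(1)},G_\infty^{(2)},G_\infty^{(3)})$ is then obtained by the same argument used in the proof of Theorem \ref{consistency Theorem}: for any non-zero vector $\xi$, a quadratic form such as $\xi^\top G_\infty^{(k)}\xi=\mathbb{E}((\xi_1-\xi_2 Y_\infty^{(1)}-\cdots)^2)$ vanishes only if a non-trivial affine combination of $1,Y_\infty^{(1)},Y_\infty^{(2)},X_\infty$ is a.s.\ zero, which is incompatible with the existence of a (strictly positive) joint density for $Z_\infty$ shown in Theorem \ref{ergodicity theorem}.

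For the convergence $T^{-1}h_T\to \mathbf{0}_9$ a.s., I would treat each entry as a continuous square-integrable martingale and apply the strong law of large numbers for continuous martingales (Theorem \ref{Lipster Shiryaev theorem} in the appendix). For this it suffices to check that the quadratic variation of each component of $h_T$, divided by $T$, converges a.s.\ to a finite limit; but each such quadratic variation is of the form $\int_0^T \phi(Z_s)\mathrm{d}s$ for $\phi$ a polynomial in $Y_s^{(1)},Y_s^{(2)},X_s$ of total degree at most three (coming from entries like $Y_s^{(i)}(Y_s^{(j)})^2$ or $X_s^2(\sigma_{21}^2 Y_s^{(1)}+\sigma_{22}^2 Y_s^{(2)})$), so \eqref{ergodic} applies provided $\mathbb{E}|\phi(Z_\infty)|<\infty$.

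The main obstacle is therefore checking the higher-moment integrability of $\phi(Z_\infty)$, in particular products involving $X_\infty^2$ and $Y_\infty^{(i)}$, which arise from the quadratic variation of the third block $h_T^{(3)}$. I would control these by Hölder's inequality, splitting the expectation into a moment of $X_\infty$ (bounded via the Fourier-Laplace transform \eqref{loi limite} or by the affine moment formula of \cite[Theorem B.2]{Lipster}) and a moment of $Y_\infty^{(i)}$ (bounded using the CIR / extended-CIR comparison already employed in Theorem \ref{consistency Theorem}). Once these integrability checks are in place, combining the two convergences via Slutsky gives $\check{\tau}_T\stackrel{a.s.}{\longrightarrow}\tau$.
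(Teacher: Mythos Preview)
Your proposal is correct and follows essentially the same route as the paper: start from the error decomposition \eqref{err}, use the ergodic limit \eqref{ergodic} to show $T^{-1}G_T\to\mathbb{E}(G_\infty)$ a.s.\ with $\mathbb{E}(G_\infty)$ positive definite (hence invertible) via the affine-combination argument, then treat $h_T$ as a continuous local martingale and combine the ergodic convergence of $T^{-1}\mathrm{H}_T$ with the strong law of large numbers for continuous local martingales to get $T^{-1}h_T\to\mathbf{0}_9$ a.s. Two minor remarks: you invoke ``Slutsky'' for the a.s.\ conclusion, but here the deterministic invertible limit and a.s.\ convergence of both factors already give the result by continuity of matrix inversion and multiplication; and for the SLLN step you should note that the ergodic limits of the diagonal quadratic variations are strictly positive (so each $\langle h^{(i)}\rangle_T\to\infty$), which is immediate since they are expectations of strictly positive polynomials in $Y_\infty^{(1)},Y_\infty^{(2)}$.
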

\begin{proof}
	By relation$\eqref{err}$, we have
$
		\check{\tau}_T-\tau=(T^{-1} G_T)^{-1}(T^{-1}h_T)
$. Furthermore,	using Theorem 4.1 in \cite{Dahbi}, we get
	\begin{equation*}
		T^{-1}G_T \stackrel{a.s.}{\longrightarrow} \mathbb{E}(G_{\infty}),\quad \text{as } T\to \infty,
	\end{equation*}
	where 
	\begin{equation}
		G_{\infty}:=\begin{bmatrix}
			G_{\infty}^{(1)}& \textbf{0}&\textbf{0}\\
			\textbf{0}& G_{\infty}^{(2)}&\textbf{0}\\
   \textbf{0}&\textbf{0}&G_{\infty}^{(3)}
		\end{bmatrix}
		\label{Ginfty}
	\end{equation}
	with
	\begin{equation*}
		G_{\infty}^{(1)}:=\begin{bmatrix}
			1 & - Y^{(1)}_{\infty} \\
			- Y^{(1)}_{\infty} & \left(Y^{(1)}_{\infty}\right)^2
		\end{bmatrix},\quad
		G_{\infty}^{(2)}:=\begin{bmatrix}
			1 & - Y_{\infty}^{{\top}} \\[4pt]
			- Y_{\infty} & Y_{\infty}Y_{\infty}^{{\top}}
		\end{bmatrix}\ \text{and}\ G_{\infty}^{(3)}:= \begin{bmatrix}
			1 & - Z_{\infty}^{{\top}} \\[4pt]
			- Z_{\infty} & Z_{\infty}Z_{\infty}^{{\top}}
		\end{bmatrix},
	\end{equation*}
	where $Z_\infty=(Y_\infty,X_\infty)$ is defined by the Fourier-Laplace transform given in Theorem 3.1 in \cite{Dahbi}. It is worth to note that $\mathbb{E}(G_{\infty})$ is finite. In addition, it is an invertible matrix since it is positive definite by the same argument used in relation \eqref{invertibility 1}. Hence, we deduce that
	\begin{equation*}
		(T^{-1}G_T)^{-1}\stackrel{a.s.}{\longrightarrow} (\mathbb{E}(G_\infty))^{-1},\quad \text{as } T\to \infty.
	\end{equation*}
	In order to finish the proof, it is enough to check that
$T^{-1}h_T \stackrel{a.s.}{\longrightarrow} \mathbf{0},$ as $T\to \infty$. First, we have  $\langle h^{(1)}_T,\left.h^{(1)}_T\right.^{{\top}}\rangle$, $\langle h^{(1)}_T,\left.h^{(2)}_T\right.^{{\top}}\rangle$ $\langle h_T^{(1)},\left.h^{(3)}_{T}\right.^{{\top}}\rangle$, $\langle h^{(2)}_{T},\left.h^{(2)}_{T}\right.^{{\top}}\rangle$, $\langle h^{(2)}_{T},\left.h^{(3)}_{T}\right.^{{\top}}\rangle$ and $\langle h^{(3)}_{T},\left.h^{(3)}_{T}\right.^{{\top}}\rangle$ are given, respectively, by
$$\mathrm{H}_T^{(1,1)}=\sigma_{11}^2\displaystyle\int_{0}^T\begin{bmatrix}
		Y_s^{(1)}&-\left(Y_s^{(1)}\right)^2\\-\left(Y_s^{(1)}\right)^2&\left(Y_s^{(1)}\right)^3
	\end{bmatrix}\mathrm{d}s,\quad 
 \mathrm{H}_T^{(1,2)}=\mathbf{0}_{2,3},$$
  $$\mathrm{H}_T^{(1,3)}=\sigma_{11}\sigma_{21} \rho_{11}\displaystyle\int_{0}^T\begin{bmatrix}
	Y^{(1)}_s&-Y^{(1)}_s Z_s^{{\top}}\\
        -\left(Y^{(1)}_s\right)^2&\left(Y^{(1)}_s\right)^2 Z_s^{{\top}}
\end{bmatrix}\mathrm{d}s,\quad\mathrm{H}_T^{(2,2)}=\sigma_{12}^2\displaystyle\int_{0}^T\begin{bmatrix}
	Y^{(2)}_s&-Y^{(2)}_s Y_s^{{\top}}\\
        -Y^{(2)}_sY_s&Y^{(2)}_s Y_s Y_s^{{\top}}
\end{bmatrix}\mathrm{d}s,$$
 $$\mathrm{H}_T^{(2,3)}=\sigma_{12}\sigma_{22} \rho_{22}\displaystyle\int_{0}^T\begin{bmatrix}
	Y^{(2)}_s&-Y^{(2)}_s Z_s^{{\top}}\\
        -Y^{(2)}_sY_s&Y^{(2)}_sY_sZ_s^{{\top}}
\end{bmatrix}\mathrm{d}s$$
and
$\mathrm{H}_T^{(3,3)}=\displaystyle\int_{0}^T\begin{bmatrix}
\sigma_{21}^2Y_s^{(1)}+\sigma_{22}^2Y_s^{(2)}&-\left(\sigma_{21}^2Y_s^{(1)}+\sigma_{22}^2Y_s^{(2)}\right)Z_s^{{\top}}\\
-\left(\sigma_{21}^2Y_s^{(1)}+\sigma_{22}^2Y_s^{(2)}\right)Z_s&\left(\sigma_{21}^2Y_s^{(1)}+\sigma_{22}^2Y_s^{(2)}\right)Z_s Z_s^{{\top}}
\end{bmatrix}\mathrm{d}s.$ 

Secondly, by the ergodicity theorem, the quadratic variation $\mathrm{H}_T$ of $h_T$ satisfies
\begin{equation*}
\dfrac{1}{T}\mathrm{H}_T\stackrel{a.s.}{\longrightarrow} \mathbb{E}(\mathrm{H}_{\infty}),\quad\text{as }T\to\infty,
\end{equation*}  where
	\begin{equation}
		\mathrm{H}_\infty=\begin{bmatrix}
			\sigma_{11}^2\mathrm{H}_\infty^{(1,1)}&\mathbf{0}_{2,3}&\sigma_{11}\sigma_{21}\rho_{11}\mathrm{H}_\infty^{(1,3)}\\
\mathbf{0}_{3,2}&\sigma_{12}^2\mathrm{H}_\infty^{(2,2)}&\sigma_{12}\sigma_{22}\rho_{22}\mathrm{H}_\infty^{(2,3)}\\
\sigma_{11}\sigma_{21}\rho_{11}\mathrm{H}_\infty^{(3,1)}&\sigma_{12}\sigma_{22}\rho_{22}\mathrm{H}_\infty^{(3,2)}&\mathrm{H}_\infty^{(3,3)}
		\end{bmatrix},
		\label{quadratic variation of h at infinty}
	\end{equation}
	with $$\mathrm{H}_{\infty}^{(1,1)}=\begin{bmatrix}
		Y_{\infty}^{(1)}&-\left(Y_{\infty}^{(1)}\right)^2\\-\left(Y_{\infty}^{(1)}\right)^2&\left(Y_{\infty}^{(1)}\right)^3
	\end{bmatrix},\quad
\mathrm{H}_{\infty}^{(1,3)}=\begin{bmatrix}
	Y^{(1)}_{\infty}&-Y^{(1)}_{\infty} Z_{\infty}^{{\top}}\\
        -\left(Y^{(1)}_{\infty}\right)^2&\left(Y^{(1)}_{\infty}\right)^2 Z_{\infty}^{{\top}}
\end{bmatrix},$$
 $$\mathrm{H}_{\infty}^{(2,2)}=\begin{bmatrix}
	Y^{(2)}_{\infty}&-Y^{(2)}_{\infty} Y_{\infty}^{{\top}}\\
        -Y^{(2)}_{\infty}Y_{\infty}&Y^{(2)}_{\infty} Y_{\infty} Y_{\infty}^{{\top}}
\end{bmatrix},\quad\mathrm{H}_{\infty}^{(2,3)}=\begin{bmatrix}
	Y^{(2)}_{\infty}&-Y^{(2)}_{\infty} Z_{\infty}^{{\top}}\\
        -Y^{(2)}_{\infty}Y_{\infty}&Y^{(2)}_{\infty}Y_{\infty}Z_{\infty}^{{\top}}
\end{bmatrix}$$
and
$\mathrm{H}_{\infty}^{(3,3)}=\begin{bmatrix}
\sigma_{21}^2Y_{\infty}^{(1)}+\sigma_{22}^2Y_{\infty}^{(2)}&-\left(\sigma_{21}^2Y_{\infty}^{(1)}+\sigma_{22}^2Y_{\infty}^{(2)}\right)Z_{\infty}^{{\top}}\\
-\left(\sigma_{21}^2Y_{\infty}^{(1)}+\sigma_{22}^2Y_{\infty}^{(2)}\right)Z_{\infty}&\left(\sigma_{21}^2Y_{\infty}^{(1)}+\sigma_{22}^2Y_{\infty}^{(2)}\right)Z_{\infty} Z_{\infty}^{{\top}}\end{bmatrix}.$
Finally, the strong law of large numbers for continuous local martingales (see Theorem 1 in \cite{Dahbi}) applied on $h_T$ completes the proof.
\end{proof}
\subsection{Asymptotic behavior of the CLSE}
\begin{theoreme}\label{Asymptotic Normality CLSE b>0 Continuous}
	Let us consider the affine diffusion model $\eqref{model0 Z}$ with initial random values independent of $(B^1_t,B_t^2,W_t)_{t\in\R_{+}}$ satisfying $\mathbb{P}(Y_0^{(1)}\in\R_{++})=1$ and $\mathbb{P}(Y_0^{(2)}\in\R_{++})=1$. If $a_1,b_{11},b_{22},\theta\in\R_{++}$ and $a_2\in(\frac{\sigma_{12}^2}{2},\infty)$, then the CLSE of $\tau$ is asymptotically normal, namely, 
	\begin{equation*}
		\sqrt{T}(\check{\tau}_T-\tau)\stackrel{\mathcal{D}}{\longrightarrow}\mathcal{N}_{9}({0},[\mathbb{E}(G_\infty)]^{-1}\mathbb{E}(\mathrm{H}_\infty)[\mathbb{E}(G_\infty)]^{-1})
	\end{equation*}
	where $G_{\infty}$ and $\mathrm{H}_{\infty}$ are given by relations $\eqref{Ginfty}$ and \eqref{quadratic variation of h at infinty}, respectively.
\end{theoreme}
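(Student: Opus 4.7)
The starting point is the error decomposition already derived in \eqref{err}, namely $\check{\tau}_T-\tau=G_T^{-1}h_T$. Multiplying both sides by $\sqrt{T}$ and rescaling, we rewrite
\begin{equation*}
\sqrt{T}(\check{\tau}_T-\tau)=\left(\frac{1}{T}G_T\right)^{-1}\frac{1}{\sqrt{T}}h_T.
\end{equation*}
The plan is to handle the two factors separately: show that $(T^{-1}G_T)^{-1}$ converges almost surely to $[\mathbb{E}(G_\infty)]^{-1}$, show that $T^{-1/2}h_T$ converges in distribution to $\mathcal{N}_9(0,\mathbb{E}(\mathrm{H}_\infty))$, and then combine the two via Slutsky's theorem.

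The first factor is already treated in the proof of Theorem \ref{consistency CLSE b>0}: the ergodicity Theorem \ref{ergodicity theorem}, applied component by component to $G_T$, yields $T^{-1}G_T\stackrel{a.s.}{\longrightarrow}\mathbb{E}(G_\infty)$, and the limit is invertible by the same positive definiteness argument used to invert $\Gamma^{(1)}_{T,N}$, $\Gamma^{(2)}_{T,N}$ and $\Gamma^{(3)}_{T,N}$ (relying on the fact that $Z_\infty$ has an absolutely continuous distribution, ensured by the Gaussian-mixture representation established in the proof of stationarity). Hence, by continuity of matrix inversion, $(T^{-1}G_T)^{-1}\stackrel{a.s.}{\longrightarrow}[\mathbb{E}(G_\infty)]^{-1}$.

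For the second factor, observe that $h_T$ is a $9$-dimensional continuous (Brownian) local martingale with respect to the filtration generated by $(B^1,B^2,W^1,W^2)$, with quadratic variation matrix $\mathrm{H}_T$ whose block structure is written explicitly in the proof of Theorem \ref{consistency CLSE b>0}. Again by the ergodicity Theorem \ref{ergodicity theorem} applied to each (polynomial) entry of $\mathrm{H}_T$, one obtains
\begin{equation*}
\frac{1}{T}\mathrm{H}_T\stackrel{a.s.}{\longrightarrow}\mathbb{E}(\mathrm{H}_\infty),
\end{equation*}
where the limit is deterministic, symmetric and finite. Applying the multivariate central limit theorem for continuous local martingales (Theorem \ref{CLT Van Zanten} in the appendix) to $h_T$ therefore gives
\begin{equation*}
\frac{1}{\sqrt{T}}h_T\stackrel{\mathcal{D}}{\longrightarrow}\mathcal{N}_9\!\left(0,\mathbb{E}(\mathrm{H}_\infty)\right).
\end{equation*}

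Finally, combining the two convergences via Slutsky's theorem yields
\begin{equation*}
\sqrt{T}(\check{\tau}_T-\tau)\stackrel{\mathcal{D}}{\longrightarrow}[\mathbb{E}(G_\infty)]^{-1}\mathcal{N}_9\!\left(0,\mathbb{E}(\mathrm{H}_\infty)\right)=\mathcal{N}_9\!\left(0,[\mathbb{E}(G_\infty)]^{-1}\mathbb{E}(\mathrm{H}_\infty)[\mathbb{E}(G_\infty)]^{-1}\right),
\end{equation*}
which is the claim. The main technical obstacle is the verification needed to apply the martingale CLT, namely that every entry of the random matrix $T^{-1}\mathrm{H}_T$ indeed converges almost surely to a deterministic limit; this amounts to checking integrability under $Z_\infty$ of each polynomial entry of $\mathrm{H}_\infty$ (expressions such as $(Y_\infty^{(1)})^3$, $Y_\infty^{(2)}Y_\infty Y_\infty^{\top}$, and $(\sigma_{21}^2 Y_\infty^{(1)}+\sigma_{22}^2 Y_\infty^{(2)})Z_\infty Z_\infty^{\top}$). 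These moments are finite thanks to the subcritical assumption $b_{11},b_{22},\theta\in\R_{++}$ together with $a_2>\sigma_{12}^2/2$, and can be established by the same moment arguments invoked in the proof of consistency of the MLE (relying on \cite[Theorem B.2]{Lipster} for polynomial moments of $Z_\infty$).
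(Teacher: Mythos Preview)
Your proof is correct and follows essentially the same approach as the paper: you start from the error decomposition \eqref{err}, rescale to isolate $(T^{-1}G_T)^{-1}$ and $T^{-1/2}h_T$, treat the first factor by ergodicity and invertibility of $\mathbb{E}(G_\infty)$, the second by a martingale CLT after showing $T^{-1}\mathrm{H}_T\to\mathbb{E}(\mathrm{H}_\infty)$ via ergodicity, and conclude with Slutsky. The only cosmetic difference is that the paper cites the CLT as ``Theorem~2 in \cite{Dahbi}'' whereas you invoke Theorem~\ref{CLT Van Zanten}; both are the same multivariate martingale CLT, and your additional remarks on finiteness of the polynomial moments of $Z_\infty$ just make explicit what the paper leaves implicit.
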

\begin{proof}
 We have, for all $T\in\R_+$,
	\begin{equation}
		\sqrt{T}(\check{\tau}_T-\tau)=\left(\dfrac{1}{T}G_T\right)^{-1}\left(\dfrac{1}{\sqrt{T}}h_T\right).
		\label{proof normality}
	\end{equation}
	On one side, the ergodicity theorem implies that $\left(\dfrac{1}{T}G_T\right)^{-1}\stackrel{a.s.}{\longrightarrow}[\mathbb{E}(G_\infty)]^{-1}$, as $T\to\infty$. On the other side, by the central limit theorem given by Theorem 2 in \cite{Dahbi}, we obtain
	\begin{equation*}
		\dfrac{1}{\sqrt{T}}h_T \stackrel{\mathcal{D}}{\longrightarrow}\mathcal{N}_{9}(0,\mathbb{E}(\mathrm{H}_{\infty})),\quad\text{as }T\to\infty.
	\end{equation*}	
	Finally, thanks to Slutsky's lemma and to the convergence $\eqref{proof normality}$, the proof is completed \end{proof}
\section{Numerical results}
The task in this section is to check the validity of some theoretical results obtained in this paper on the classification and the parametric estimation problem of the double Heston model using extensive simulations with \textit{Python}. The simulation setup involves the following steps: first, for $T\in\R_{++}$, we discretize the interval $[0, T]$ into $N$ sub-intervals of length $\Delta t$, where $N\in\N$ and we denote by $t_i=i\Delta t$, for all $i\in\lbrace 0,\cdots,N\rbrace$. Second, we generate random variables $\Delta B_t^1,\Delta B_t^2,\Delta W_t^1,\Delta W_t^2\sim \mathcal{N}(0, \sqrt{\Delta t})$, then we generate sample paths for the processes \(Y^{(1)}\), \(Y^{(2)}\), and \(X\) using the modified Euler-Maruyama scheme defined as follows: given $\hat{Y}_{t_0}^{(1)}\in\R_{++},\ \hat{Y}_{t_0}^{(2)}\in\R_{++}$ and $\hat{X}_{t_0}\in\R$, we consider
\begin{equation*}
    \begin{cases}
        \hat{Y}^{(1)}_{t_{i+1}} &= \hat{Y}^{(1)}_{t_i} + (a_1 - b_{11} \hat{Y}^{(1)}_{t_i}) \Delta t + \sigma_{11} \sqrt{\left\vert \hat{Y}^{(1)}_{t_i}\right\vert} \Delta B^1_{t_i}, \\
        \hat{Y}^{(2)}_{t_{i+1}} &= \hat{Y}^{(2)}_{t_i} + (a_2 - b_{21} \hat{Y}^{(1)}_{t_i} - b_{22} \hat{Y}^{(2)}_{t_i}) \Delta t + \sigma_{12} \sqrt{\left\vert \hat{Y}^{(2)}_{t_i}\right\vert} \Delta B^2_{t_i}, \\
        \hat{X}_{t_{i+1}} &= \hat{X}_{t_i} + \left( m - \kappa_1 \hat{Y}^{(1)}_{t_i} - \kappa_2 \hat{Y}^{(2)}_{t_i} - \theta \hat{X}_{t_i} \right) \Delta t \\
        & \quad + \sigma_{21} \sqrt{\hat{Y}^{(1)}_{t_i}} \left( \rho_{11} \Delta B^1_{t_i} + \bar{\rho}_{11} \Delta W^1_{t_i} \right) + \sigma_{22} \sqrt{\hat{Y}^{(2)}_{t_i}} \left( \rho_{22} \Delta B^{2}_{t_i} + \bar{\rho}_{22} \Delta W^2_{t_i} \right).
        \end{cases}
\end{equation*}
It is worth to note here that if we do not consider the absolute values of $\hat{Y}^{(1)}_{t_i}$ and $\hat{Y}^{(2)}_{t_i}$ in the scheme, then the square root is not well defined since the Gaussian increments may lead $\hat{Y}^{(1)}_{t_{i+1}}$ and $\hat{Y}^{(2)}_{t_{i+1}}$ to negative values with some positive probability. This solution was proposed by Higham and Mao \cite{higham2005convergence} for the CIR process and they have shown the strong convergence of the scheme. In relation to this problem, many discretization schemes dedicated to the CIR process have been studied in recent years by Deelstra and Delbaen \cite{deelstra1998convergence}, Bossy et al. \cite{berkaoui2008euler,diop2003discretisation}, Brigo and Alfonsi \cite{brigo2005credit,alfonsi2005discretization}, Kahl and Schurz \cite{kahl2006balanced}, Lord et al. \cite{lord2010comparison}, Andersen \cite{andersen2007efficient}, and especially Alfonsi \cite{Alfonsi2010Simulation}, where he studied higher-order schemes for the CIR process.\\

In the following, we provide a numerical illustration for the classification result given in Proposition \ref{classification}. We study first the ergodic case and we consider the following parameter set :  $b_{11}= 1$, $b_{22} = 3$, $\theta = 2$, $a_1 = 1$, $a_2 = 1$, $b_{21} = -0.5$, $m= 1$, $\kappa_1 = 0.5$, $\kappa_2 = 0.5$, $\sigma_{11} = 0.1$, $\sigma_{12} = 0.1$, $\sigma_{21} = 0.1$, $\sigma_{22} = 0.1$, $\rho_{11} = 0.8$ and $\rho_{22} = 0.8$. We fix as well the time parameters as follows $T = 50$, $\Delta t = 0.1$ and $N = \frac{T}{\Delta t} = 500$. The number of simulation runs is set to 1000. We define the initial values $Y^{(1)}_0=Y^{(2)}_0= 0.5$ and $X_0= 0$.

\begin{figure}[H]
    \centering
    \includegraphics[width=0.8\textwidth]{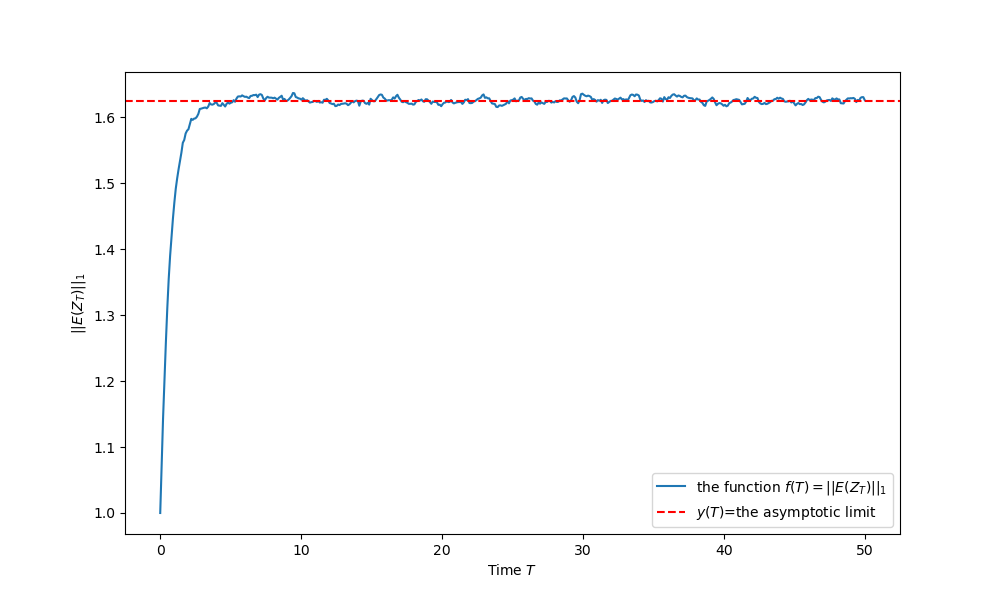}
    \caption{Asymptotic behavior of $E(Z_T)$ in the subcritical case: $(b_{11}, b_{22}, \theta) = (1, 3, 2)$.}
    \label{fig:subcritical}
\end{figure}

For the non-ergodic cases, we will just change the values of the parameters $b_{11}$, $b_{22}$ and $\theta$.

\begin{figure}[H]
    \centering
    \includegraphics[width=0.8\textwidth]{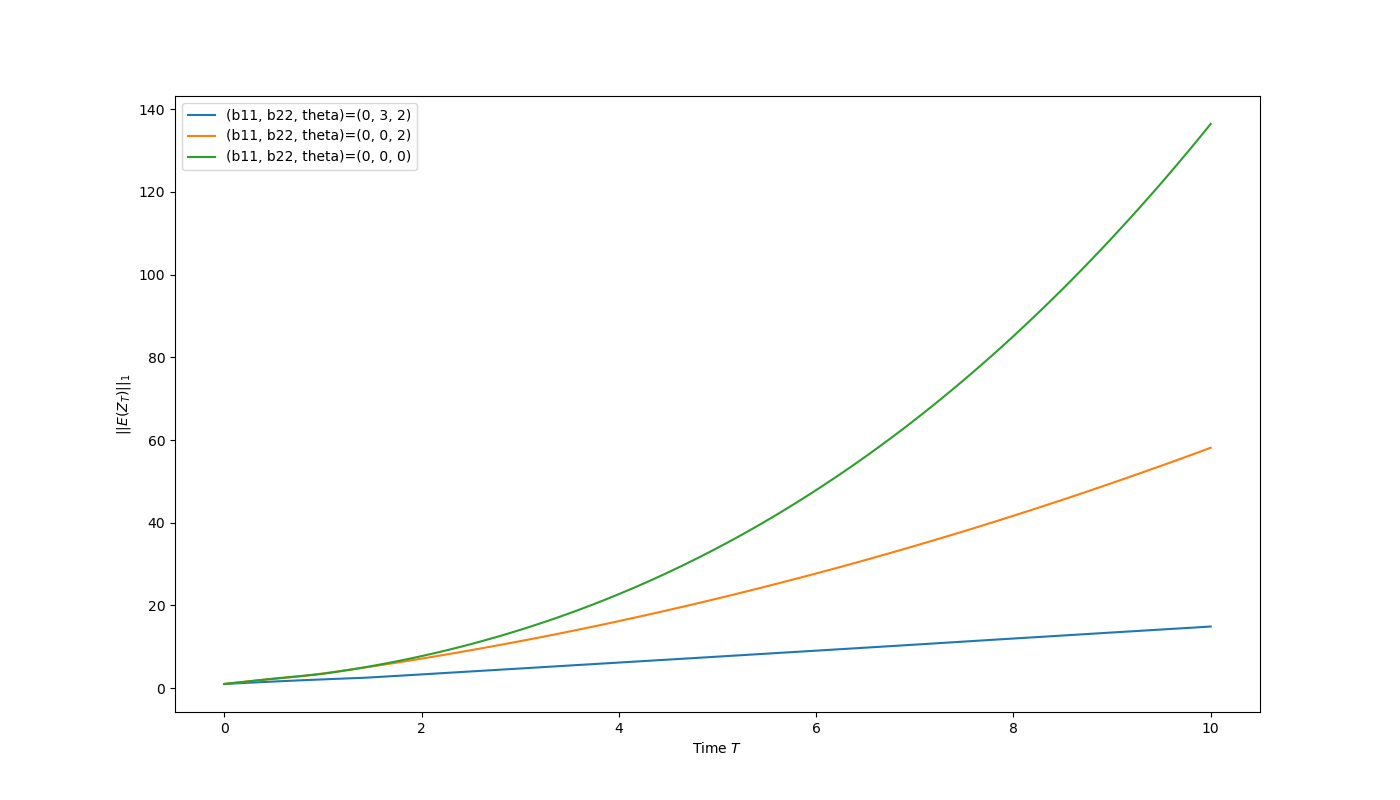}
    \caption{Asymptotic behavior of $E(Z_T)$ in the critical case for different parameter sets.}
    \label{fig:critical}
\end{figure}

In these critical cases, we remark that the expectation explodes with a speed proportional to $T$ when $(b_{11}, b_{22}, \theta)=(0, 3, 2)$, to $T^2$ when $(b_{11}, b_{22}, \theta)=(0, 0, 2)$ and to $T^3$ when $(b_{11}, b_{22}, \theta)=(0, 0, 0)$. Note that this result holds also when we change the position of the zeros or the values of the non-null parameters.

\begin{figure}[H]
    \centering
    \includegraphics[width=0.8\textwidth]{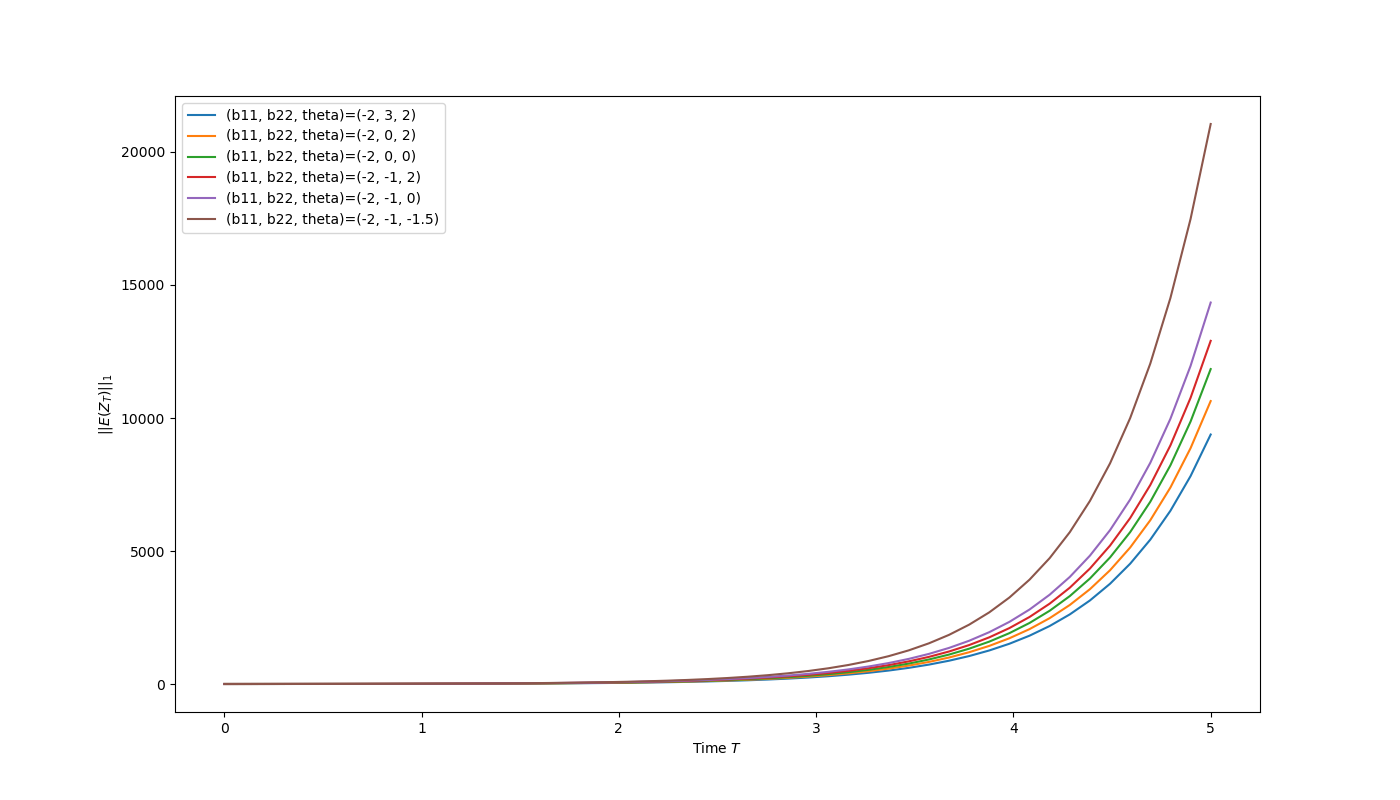}
    \caption{Asymptotic behavior of $E(Z_T)$ in the supercritical case.}
    \label{fig:supercritical_behavior}
\end{figure}

We remark here that when we take at least one negative parameter the expectation explodes with an exponential speed and that more we consider negative parameters, faster the explosion is.\\

For the estimation problem, in order to simulate the maximum likelihood and the conditional least squares estimators, we need to compute necessary integrals and stochastic integrals using the trapezoidal rule to approximate the continuous integrals with discrete sums as follows
    \begin{equation*}
        \int_0^T \xi_s \, \mathrm{d}s \approx \sum_{i=0}^{N-1} \frac{\Delta t}{2} \left( \xi_{t_i} + \xi_{t_{i+1}} \right), \quad
        \int_0^T \xi_s \, \mathrm{d}\eta_s \approx \sum_{i=0}^{N-1} \xi_{t_i} (\eta_{t_{i+1}} - \eta_{t_i}).
    \end{equation*}
We recall that in this part we have studied just the subcritical case. In the numerical illustrations, we can test different values of the drift and the diffusion parameters. Let us consider, for example, the same vector as above: $(a_1,b_{11},a_2,b_{21},b_{22},m,\kappa_1,\kappa_2,\theta)=(1,1,1,-0.5,3,1,0.5,0.5,2)$.

\begin{figure}[H]
    \centering
    \includegraphics[width=0.8\textwidth]{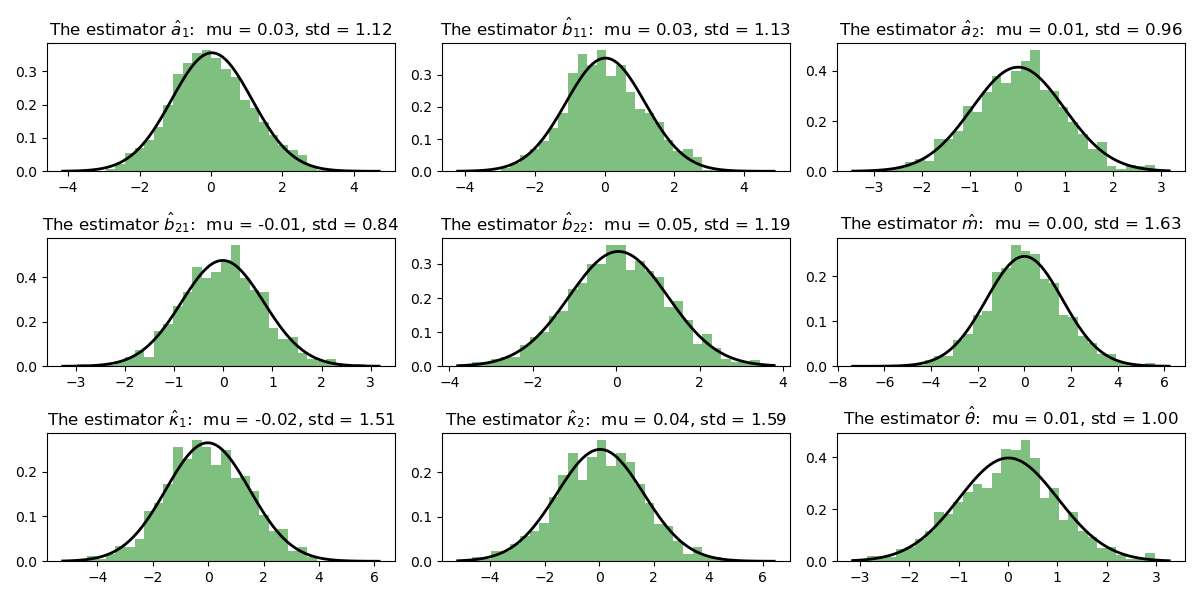}
    \caption{MLE: Distribution of the scaled error: $\sqrt{T}(\hat{\tau}_i-\tau_i)$, with $T=10^3$.}   \label{Distribution of the scaled error MLE}
\end{figure}

\begin{figure}[H]
    \centering
    \includegraphics[width=0.85\textwidth]{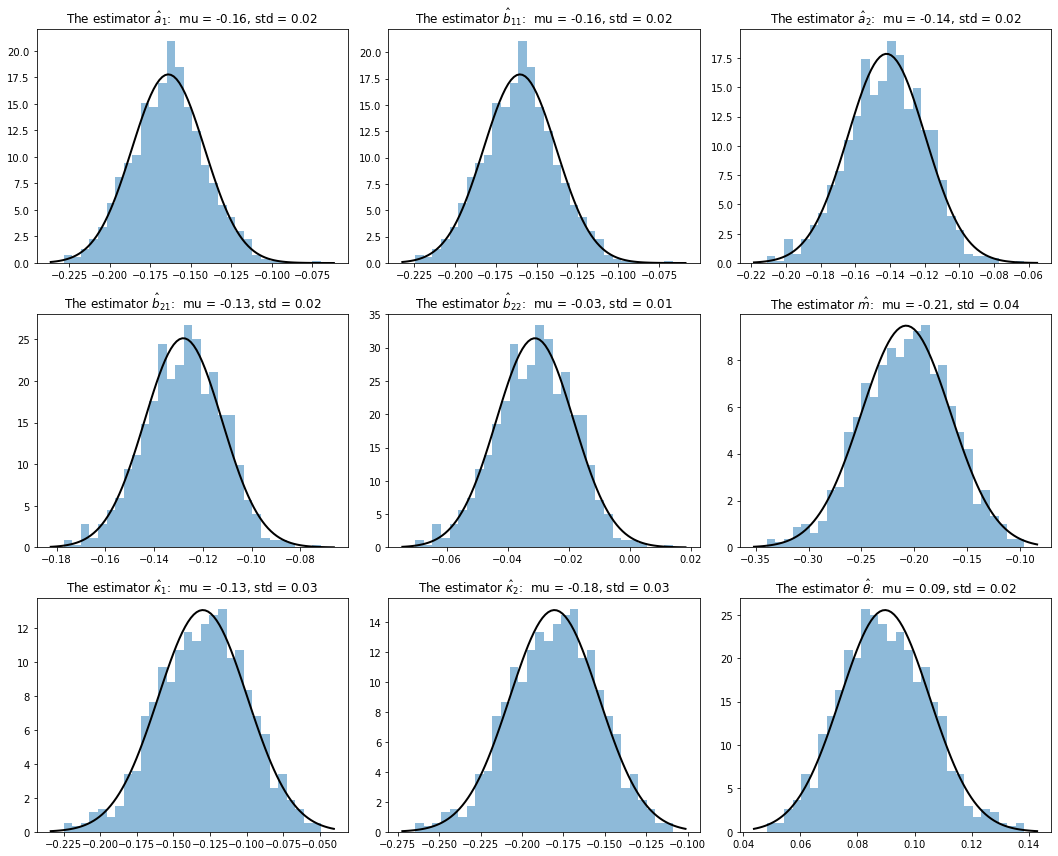}
    \caption{CLSE: Distribution of the scaled error: $\sqrt{T}(\check{\tau}_i-\tau_i)$, with $T=3\times10^5$.}   \label{Distribution of the scaled error CLSE}
\end{figure}

In this case, we observe that the MLE estimation method achieves asymptotic centered normality faster than the CLSE estimation method. Next, for all $i=1,\cdots,9$, we compute the error term using $1000$ simulations. Namely, we consider
$\left\vert\hat{\tau}^i_T-\tau_i\right\vert:=\dfrac{1}{1000}\displaystyle\sum_{k=1}^{1000} \left\vert\left(\hat{\tau}^i_T-\tau_i\right)_k\right\vert.$

\begin{table}[H]
\caption{ The MLE estimation error $\left\vert\hat{\tau}^i_T-\tau_i\right\vert$}
\begin{tabular}{l|lllllllll|}
\cline{2-10} 
\multicolumn{1}{l|}{\multirow{-2}{*}{\cellcolor[HTML]{FFFFFF}}}                                               & \multicolumn{1}{l|}{$\tau_1=a_1$} & \multicolumn{1}{l|}{$\tau_2=b_{11}$} & \multicolumn{1}{l|}{$\tau_3=a_2$} & \multicolumn{1}{l|}{$\tau_4=b_{21}$} & \multicolumn{1}{l|}{$\tau_5=b_{22}$} & \multicolumn{1}{l|}{$\tau_6=m$} & \multicolumn{1}{l|}{$\tau_7=\kappa_1$}& \multicolumn{1}{l|}{$\tau_8=\kappa_2$} & $\tau_9=\theta$ \\ 
\hline
\multicolumn{1}{|l|}{{\color[HTML]{000000}$T=50$ }} & \multicolumn{1}{l|}{$0.0300$}        & \multicolumn{1}{l|}{$0.0301$}                  & \multicolumn{1}{l|}{$0.0018$}                         & \multicolumn{1}{l|}{$0.0013$}                         & \multicolumn{1}{l|}{$0.0054$}                         & \multicolumn{1}{l|}{$0.0149$}                         & \multicolumn{1}{l|}{$0.0204$}                         & \multicolumn{1}{l|}{$0.0140$}                         &                          \multicolumn{1}{l|}{$0.0090$}\\\hline
\multicolumn{1}{|l|}{{\color[HTML]{000000}$T=100$ }} & \multicolumn{1}{l|}{$0.0137$}        & \multicolumn{1}{l|}{$0.0143$}                  & \multicolumn{1}{l|}{$ 0.0018$}                         & \multicolumn{1}{l|}{$ 0.0012$}                                                & \multicolumn{1}{l|}{$0.0029$}                         & \multicolumn{1}{l|}{$0.0105$}                         & \multicolumn{1}{l|}{$0.0077$}                         &                          \multicolumn{1}{l|}{$0.0060$}& \multicolumn{1}{l|}{$0.0027$}  \\ \hline
\multicolumn{1}{|l|}{{\color[HTML]{000000}$T=500$ }} & \multicolumn{1}{l|}{$0.0048$}        & \multicolumn{1}{l|}{$0.0047$}        & \multicolumn{1}{l|}{$0.0015$}                  & \multicolumn{1}{l|}{$0.0010$}    & \multicolumn{1}{l|}{$0.0010$}                      & \multicolumn{1}{l|}{$0.0027$}                         & \multicolumn{1}{l|}{$0.0027$}                         & \multicolumn{1}{l|}{$0.0003$}                         & \multicolumn{1}{l|}{$0.0013$}   \\ \hline
\end{tabular}
\end{table}

\begin{table}[H]
\caption{ The CLSE estimation error $\left\vert\check{\tau}^i_T-\tau_i\right\vert$}
\begin{tabular}{l|lllllllll|}
\cline{2-10}
\multicolumn{1}{l|}{\multirow{-2}{*}{\cellcolor[HTML]{FFFFFF}}}                                               & \multicolumn{1}{l|}{$\tau_1=a_1$} & \multicolumn{1}{l|}{$\tau_2=b_{11}$} & \multicolumn{1}{l|}{$\tau_3=a_2$} & \multicolumn{1}{l|}{$\tau_4=b_{21}$} & \multicolumn{1}{l|}{$\tau_5=b_{22}$} & \multicolumn{1}{l|}{$\tau_6=m$} & \multicolumn{1}{l|}{$\tau_7=\kappa_1$}& \multicolumn{1}{l|}{$\tau_8=\kappa_2$} & $\tau_9=\theta$ \\ 
\hline
\multicolumn{1}{|l|}{{\color[HTML]{000000}$T=10^2$ }} & \multicolumn{1}{l|}{$0.3449$}        & \multicolumn{1}{l|}{$0.3443$}                  & \multicolumn{1}{l|}{$0.4897$}                         & \multicolumn{1}{l|}{$0.3510$}                         & \multicolumn{1}{l|}{$0.2735$}                         & \multicolumn{1}{l|}{$0.4702$}                         & \multicolumn{1}{l|}{$0.2991$}                         & \multicolumn{1}{l|}{$0.3862$}                         &                          \multicolumn{1}{l|}{$0.1792$}\\\hline
\multicolumn{1}{|l|}{{\color[HTML]{000000}$T=10^3$ }} & \multicolumn{1}{l|}{$0.0636$}        & \multicolumn{1}{l|}{$0.0632$}                  & \multicolumn{1}{l|}{$ 0.0914$}                         & \multicolumn{1}{l|}{$ 0.0661$}                         & \multicolumn{1}{l|}{$0.0498$}                         & \multicolumn{1}{l|}{$0.0998$}                         & \multicolumn{1}{l|}{$0.0692$}                         & \multicolumn{1}{l|}{$0.0699$}                         &                          \multicolumn{1}{l|}{$0.0369$}\\ \hline
\multicolumn{1}{|l|}{{\color[HTML]{000000}$T=10^4$ }} & \multicolumn{1}{l|}{$0.0061$}        & \multicolumn{1}{l|}{$0.0060$}                  & \multicolumn{1}{l|}{$0.0103$}                         & \multicolumn{1}{l|}{$0.0076$}                         & \multicolumn{1}{l|}{$0.0053$}                         & \multicolumn{1}{l|}{$0.0109$}                         & \multicolumn{1}{l|}{$0.0075$}                         & \multicolumn{1}{l|}{$0.0077$}                         &                          \multicolumn{1}{l|}{$0.0041$}\\ \hline
\end{tabular}
\end{table}

As a result, according to our simulations, we observe that both the MLE and CLSE methods are consistent and have a Gaussian asymptotic distribution with a mean equal to 0. In addition, despite the advantage that the CLSE method has fewer conditions for the asymptotic normality theorem than the MLE, the estimators obtained by the MLE method converge faster, from a numerical perspective, to the exact drift parameter than those obtained by the CLSE method. In fact, based on the tables above, we observe that the estimation error reaches $10^{-2}$ when $T = 500$ for the MLE method and reaches the same value when $T = 10^4$ for the CLSE method. It is worth noting that the slow convergence of the CLSE method is due to its construction, which involves a discretization of the observed process followed by a change of variables based on some Taylor approximations. These steps undoubtedly introduce additional error terms to the numerical one.

\appendix
\begin{center}
\bf{\Large Appendix}
\end{center}

\newtheorem{Theoreme}{Theorem}
\newtheorem{Remarque}{Remark}
\newtheorem{Lemme}{Lemma}
\setcounter{Theoreme}{0}

In what follows, we recall some limit theorems for continuous local martingales used in the study of the asymptotic behavior of the MLE of $\tau$. First, we recall a strong law of large numbers for continuous local martingales; see Liptser and Shiryaev \cite{Lipster}.
\begin{Lemme}\label{finite Expectation of the inverse of CIR integral}
    Let $(Y_t)_{t\in\R_+}$ be a CIR process solution to the SDE 
    \begin{equation*}
    \mathrm{d}Y_t=\left(a-bY_t\right)\mathrm{d}t+\sigma\sqrt{Y_t}\mathrm{d}B_t,
    \end{equation*}
    with $a,b,\sigma\in\R_{++}$ and $(B_t)_{t\in\R_+}$ is a standard Brownian motion. Then, we have $\mathbb{E}\left(\dfrac{1}{\int_0^1 Y_s \mathrm{d}s}\right)<\infty$.
\end{Lemme}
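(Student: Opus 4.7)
The plan is to express the reciprocal as a Laplace integral and then exploit the explicit Laplace transform of the integrated CIR process. Starting from the elementary identity $\frac{1}{z}=\int_0^\infty e^{-\lambda z}\,\mathrm{d}\lambda$ valid for any $z\in\R_{++}$, and noting that $\int_0^1 Y_s\,\mathrm{d}s>0$ almost surely (because $Y$ has continuous paths starting from $Y_0\in\R_{++}$), Fubini--Tonelli applied to the non-negative integrand yields
\begin{equation*}
\mathbb{E}\left(\frac{1}{\int_0^1 Y_s\,\mathrm{d}s}\right)=\int_0^\infty \mathbb{E}\left(e^{-\lambda\int_0^1 Y_s\,\mathrm{d}s}\right)\mathrm{d}\lambda.
\end{equation*}
So everything reduces to showing that the inner Laplace transform is integrable in $\lambda$ on $(0,\infty)$.

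Next, I would invoke the affine property of the CIR process (equivalently, a direct Feynman--Kac/Riccati computation) to write, for $y_0=Y_0$ and $\gamma(\lambda):=\sqrt{b^2+2\sigma^2\lambda}$,
\begin{equation*}
\mathbb{E}\left(e^{-\lambda\int_0^1 Y_s\,\mathrm{d}s}\,\bigl|\,Y_0=y_0\right)=A(\lambda)\,e^{-B(\lambda)\,y_0},
\end{equation*}
with
\begin{equation*}
A(\lambda)=\left(\frac{2\gamma\, e^{(b+\gamma)/2}}{(\gamma+b)(e^{\gamma}-1)+2\gamma}\right)^{\!2a/\sigma^2},\qquad B(\lambda)=\frac{2\lambda(e^{\gamma}-1)}{(\gamma+b)(e^{\gamma}-1)+2\gamma}.
\end{equation*}
A direct asymptotic analysis as $\lambda\to\infty$, using $\gamma(\lambda)\sim\sigma\sqrt{2\lambda}$, gives $A(\lambda)\sim C_1\exp\!\bigl(-a\sqrt{2\lambda}/\sigma\bigr)$ and $B(\lambda)\sim\sqrt{2\lambda}/\sigma$. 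Therefore, conditionally on $Y_0=y_0$ and then averaging over $Y_0$, the integrand is bounded above by $C_1\exp\!\bigl(-(a+y_0)\sqrt{2\lambda}/\sigma\bigr)$ for $\lambda$ large, while on any bounded interval $[0,\Lambda]$ it is dominated by $1$.

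Splitting $\int_0^\infty$ into $\int_0^\Lambda+\int_\Lambda^\infty$ and combining the two bounds, the first piece is finite trivially and the second piece is a convergent integral of the form $\int_\Lambda^\infty e^{-c\sqrt{\lambda}}\,\mathrm{d}\lambda$. I would close by taking $Y_0$ expectation inside (again by Tonelli) and, if needed, truncating $Y_0$ on $\{Y_0\ge\varepsilon\}$ to ensure uniformity before letting $\varepsilon\to 0$. The only non-routine step is the asymptotic analysis of $A(\lambda)$ and $B(\lambda)$; the classical affine/Riccati computation giving the explicit formulas, and Fubini--Tonelli, are standard. The key observation making the proof work without any Feller-type condition on $2a/\sigma^2$ is the $e^{-c\sqrt{\lambda}}$ decay (rather than $\lambda^{-p}$), which is more than enough to secure integrability at infinity.
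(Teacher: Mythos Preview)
Your proof is correct and follows essentially the same route as the paper's: both start from the identity $\tfrac{1}{z}=\int_0^\infty e^{-\lambda z}\,\mathrm{d}\lambda$, apply Fubini--Tonelli, insert the explicit Laplace transform of the integrated CIR process, and conclude from the $e^{-c\sqrt{\lambda}}$ decay at infinity together with boundedness near the origin. The paper obtains the Laplace transform by citing a lemma of Ben Alaya and Kebaier, whereas you invoke the affine/Riccati computation directly; the resulting asymptotics are the same. One small remark: your final hedge about truncating on $\{Y_0\ge\varepsilon\}$ is unnecessary, since the factor $A(\lambda)$ alone already yields the bound $A(\lambda)\le C_1\exp\!\bigl(-a\sqrt{2\lambda}/\sigma\bigr)$ for large $\lambda$, uniformly in $y_0\ge 0$, and $e^{-B(\lambda)y_0}\le 1$.
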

\begin{proof}
    Taking $\lambda=0$ and $t=1$ in Lemma 2 of \cite{Alaya1} then considering its integral on $\R_+$ w.r.t. $\mu$, we deduce thanks to Fubini-Tonelli that
\begin{equation*}
    \mathbb{E}\left(\dfrac{1}{\int_0^1 Y_s \mathrm{d}s}\right)=\displaystyle\int_0^\infty e^{-a\tilde{\phi}_\mu}e^{-x\tilde{\psi}_\mu}\mathrm{d}\mu,
\end{equation*}
where $\tilde{\psi}_\mu=\dfrac{2\mu \left(1-e^{-\sqrt{b^2+4\sigma \mu}}\right)}{\left(\sqrt{b^2+4\sigma \mu}-b\right)e^{-\sqrt{b^2+4\sigma \mu}}+\sqrt{b^2+4\sigma \mu}+b}$ \\and $\tilde{\phi}_\mu=-\dfrac{1}{\sigma}\left(\dfrac{b-\sqrt{b^2+4\sigma \mu}}{2}+\log(2)+\log\left(\dfrac{\sqrt{b^2+4\sigma \mu}}{\left(\sqrt{b^2+4\sigma \mu}-b\right)e^{-\sqrt{b^2+4\sigma \mu}}+\sqrt{b^2+4\sigma \mu}+b}\right)\right)$.\\
Furthermore, it is easy to check that on the neighborhood of zero (i.e., $\mu\in(0,\varepsilon)$, $\varepsilon\in\R_{++}$), there exists a constant $\alpha\in\R_{+}$ such that we have the following equivalence relation $e^{-a\tilde{\phi}_\mu}e^{-x\tilde{\psi}_\mu}\underset{0}{\sim} \alpha$, and on the neighborhood of infinity (i.e., $\mu\in (A,\infty)$, $A\in\R_{++}$), there exists a constant $\beta\in\R_{++}$ such that we have the following equivalence relation  $e^{-a\tilde{\phi}_\mu}e^{-x\tilde{\psi}_\mu}\underset{\infty}{\sim} e^{-\beta \sqrt{\mu}}$ where the function $\mu\mapsto e^{-\beta \sqrt{\mu}}$ is integrable on $(A,\infty)$. Hence, the proof is completed.
\end{proof}
\begin{Theoreme}(Liptser and Shiryaev (2001)) Let $(\Omega,\mathcal{F},(\mathcal{F})_{t\in \real_{+}},\mathbb{P})$ be a filtered probability space satisfying the usual conditions. Let  $(M_{t})_{t \in \real_{+}}$ be a square-integrable continuous local martingale with respect to the filtration $(\mathcal{F}_t)_{t\in \real_{+}}$ such that $\mathbb{P} \left( M_0 = 0\right)=1.$ Let $(\xi_t)_{t\in \real_{+}}$ be a progressively measurable process such that
$$\mathbb{P}\left( \displaystyle\int_{0}^{t}  \xi_{u}^{2}\mathrm{~d} \left\langle M\right\rangle_u < \infty\right)=1, \quad t \in \real_{+},$$
and
\begin{align*}
\displaystyle\int_{0}^{t}  \xi_{u}^{2} \mathrm{~d} \left\langle M\right\rangle_u \stackrel{a.s.}{\longrightarrow} \infty,\quad as\;\; t\longrightarrow \infty,
\end{align*}
where $\left( \left\langle M\right\rangle_t  \right)_{t \in \real_{+}} $ denotes the quadratic variation process of $M.$ Then
\begin{align*}
\dfrac{\displaystyle\int_{0}^{t}  \xi_{u} \mathrm{~d} M_u}{\displaystyle\int_{0}^{t}  \xi_{u}^{2} \mathrm{~d} \left\langle M\right\rangle_u}
\stackrel{a.s.}{\longrightarrow} 0,\quad as\;\; t\longrightarrow \infty.
\end{align*}
If $(M_{t})_{t \in \real_{+}}$ is a standard Wiener process, the progressive measurability of $(\xi_t)_{t\in \real_{+}}$ can be relaxed to measurability and adaptedness to the filtration $(\mathcal{F})_{t\in \real_{+}}.$
\label{Lipster Shiryaev theorem}
\end{Theoreme}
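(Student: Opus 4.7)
The plan is to reduce to the classical strong law of large numbers for Brownian motion via a time-change argument, using the Dambis--Dubins--Schwarz (DDS) representation of continuous local martingales. First I would set $N_t := \int_0^t \xi_u\,\mathrm{d}M_u$, which is a continuous local martingale null at $0$ with quadratic variation
\begin{equation*}
\langle N\rangle_t \;=\; \int_0^t \xi_u^2\,\mathrm{d}\langle M\rangle_u \;=:\; A_t.
\end{equation*}
By the first hypothesis, $A_t$ is almost surely finite for each $t$, and by the second hypothesis $A_t \to \infty$ almost surely; the map $t\mapsto A_t$ is continuous and non-decreasing since $\langle M\rangle$ is continuous and $\xi$ is progressively measurable.

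Next I would apply the Dambis--Dubins--Schwarz theorem: after enlarging the probability space if necessary, there exists a standard Wiener process $(\beta_s)_{s\in\mathbb{R}_+}$ such that $N_t = \beta_{A_t}$ almost surely for every $t\in\mathbb{R}_+$. The ratio in the statement therefore rewrites as
\begin{equation*}
\frac{\int_0^t \xi_u\,\mathrm{d}M_u}{\int_0^t \xi_u^2\,\mathrm{d}\langle M\rangle_u} \;=\; \frac{\beta_{A_t}}{A_t}.
\end{equation*}
Since $A_t\to\infty$ almost surely and the classical strong law of large numbers for Brownian motion gives $\beta_s/s\to 0$ almost surely as $s\to\infty$, composing these two limits yields the desired $\beta_{A_t}/A_t\to 0$ almost surely, proving the theorem.

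For the final remark concerning the case where $M$ is itself a standard Wiener process: if $\xi$ is only measurable and adapted (rather than progressively measurable), one can invoke the fact that every measurable adapted process admits a progressively measurable modification indistinguishable from it on a set of full measure (classical result, e.g., Meyer's theorem), and the stochastic integral against a Brownian motion depends only on the equivalence class of $\xi$ in $L^2(\mathrm{d}t\otimes\mathrm{d}\mathbb{P})$; hence the assumption can be relaxed without affecting either side of the displayed convergence. The main subtlety of the proof is the justification of the DDS time change when $A_\infty = \infty$ may fail deterministically but holds only almost surely; this is handled by working on the almost sure event $\{A_\infty = \infty\}$ and defining $\beta$ as a standard Wiener process extended beyond $\sup_t A_t$ on the complementary event via an independent Brownian motion, which does not affect the conclusion since the ratio is studied as $t\to\infty$ on a set of full probability.
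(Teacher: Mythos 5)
Your proof is correct, but note that the paper does not prove this statement at all: it is recalled in the appendix as a known theorem of Liptser and Shiryaev (2001), so there is no in-paper argument to compare with. Your route --- setting $N_t=\int_0^t\xi_u\,\mathrm{d}M_u$, observing $\langle N\rangle_t=A_t\to\infty$ a.s., invoking Dambis--Dubins--Schwarz to write $N_t=\beta_{A_t}$, and concluding from the Brownian strong law $\beta_s/s\to0$ --- is a standard and valid alternative to the textbook proof, which instead notes that $\int_0^t\xi_u(1+A_u)^{-1}\,\mathrm{d}M_u$ has quadratic variation $\int_0^t (1+A_u)^{-2}\,\mathrm{d}A_u\le 1$, hence converges a.s., and then applies a pathwise Kronecker-type lemma on the event $\{A_\infty=\infty\}$. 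The DDS argument is shorter but relies essentially on path continuity, while the Kronecker argument adapts more readily to discontinuous martingales. Two small remarks: since $A_\infty=\infty$ a.s. by hypothesis, the DDS representation requires no enlargement of the probability space, so your precaution about the complementary event concerns a null set and can simply be dropped; and the square-integrability of $M$ assumed in the statement is not needed in your argument (continuity of $M$ and the two hypotheses on $A$ suffice), which is consistent with the general form of the result. Your justification of the final remark (passing to a progressively measurable modification when $M$ is a Wiener process) is the standard one and is fine.
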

The second theorem is about the asymptotic behavior of continuous multivariate local martingales; see \cite[Theorem 4.1]{Zanten}.
\begin{Theoreme}
	For $p\in\N^*$, let $M=(M_t)_{t\in\R_+}$ be a $p$-dimensional square-integrable continuous
	local martingale with respect to the filtration $(\mathcal{F}_t)_{t\in\R_+}$ such that $\mathbb{P}(M_0=0)=1$. Suppose that
	there exists a function $Q:\left[t_0,\infty\right) \to \mathcal{M}_p$ with some $t_0\in\R_+$ such that $Q(t)$ is an invertible
	(non-random) matrix for all $t\in\R_+$, $\underset{t\to\infty}{\lim} \norm{Q(t)}=0$ and
	$$	Q(t) \langle M\rangle_t Q(t)^{\top}\stackrel{\mathbb{P}}{\longrightarrow}\eta\eta^{\top},\quad \text{as }t\longrightarrow \infty,$$
	where $\eta$ is a random matrix in $\mathcal{M}_p$. Then, for each random matrix $A\in\mathcal{M}_{k,l}$,  $k,l\in\N^*$, defined on $(\Omega,\mathcal{F},(\mathcal{F})_{t\in \real_{+}},\mathbb{P})$, we have
	$$
	(Q(t)M_t,A) \stackrel{\mathcal{D}}{\longrightarrow}
	(\eta Z,A),\quad \text{as }t\longrightarrow \infty,$$
	where $Z$ is a $p$-dimensional standard normally distributed random vector independent of $(\eta,A)$.
	\label{CLT Van Zanten}
\end{Theoreme}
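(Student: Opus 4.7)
The plan is to reduce the multivariate CLT for continuous local martingales to the scalar case via the Cramer-Wold device, use the complex-valued Doleans exponential to identify the limiting characteristic function, and lift the marginal convergence to joint convergence with $A$ via stable convergence.

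First, for every fixed $u\in\R^p$, the scalar process $N_s^{(t)}:=u^{\top} Q(t) M_s$, $s\in[0,t]$, is a real continuous local martingale with quadratic variation $\langle N^{(t)}\rangle_s = u^{\top} Q(t)\langle M\rangle_s Q(t)^{\top} u$. By the Cramer-Wold theorem, it suffices to prove that for every bounded continuous function $g$ of $A$,
$$\mathbb{E}\bigl[g(A)\,e^{i u^{\top} Q(t) M_t}\bigr] \longrightarrow \mathbb{E}\bigl[g(A)\,e^{-\frac{1}{2} u^{\top} \eta\eta^{\top} u}\bigr] \quad \text{as } t\to\infty.$$
This would identify the conditional law of the limit of $Q(t)M_t$ given $(\eta,A)$ as $\mathcal{N}_p(\mathbf{0}_p,\eta\eta^{\top})$ and deliver the distributional identity $(Q(t)M_t,A)\stackrel{\mathcal{D}}{\longrightarrow}(\eta Z,A)$ with $Z\sim\mathcal{N}_p(\mathbf{0}_p,\mathbf{I}_p)$ independent of $(\eta,A)$.

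Next, for fixed $t$ and $u$, I would apply Ito's formula to the complex-valued process $\mathcal{E}_s^{(t)} := \exp\bigl(i N_s^{(t)} + \tfrac{1}{2}\langle N^{(t)}\rangle_s\bigr)$. A direct computation shows that the drift and quadratic-variation contributions cancel exactly, so $\mathcal{E}^{(t)}$ is a continuous local martingale of modulus $e^{\frac{1}{2}\langle N^{(t)}\rangle_s}$. Localization via the stopping times $\tau_n^{(t)}:=\inf\{s\ge 0 : \langle N^{(t)}\rangle_s \ge n\}$ turns it into a true martingale on $[0,t\wedge\tau_n^{(t)}]$, producing the key identity
$$\mathbb{E}\bigl[e^{i u^{\top} Q(t) M_{t\wedge\tau_n^{(t)}}}\,e^{\frac{1}{2} u^{\top} Q(t)\langle M\rangle_{t\wedge\tau_n^{(t)}} Q(t)^{\top} u} \,\bigm|\,\mathcal{F}_0\bigr] = 1.$$

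Finally, I would pass to the limits $n\to\infty$ and then $t\to\infty$, invoking the hypothesis $u^{\top} Q(t)\langle M\rangle_t Q(t)^{\top} u\to u^{\top} \eta\eta^{\top} u$ in probability, to extract the characteristic function of the weak limit. The main obstacle will be twofold. First, one needs uniform integrability to exchange the limit and expectation, which can be handled by truncating $\langle M\rangle$ at an auxiliary level $L$ and then sending $L\to\infty$ via a diagonal argument. Second, and more delicate, is lifting the marginal convergence of $Q(t)M_t$ to joint convergence with the random matrix $A$, which in general is not $\mathcal{F}_0$-measurable, so the naive insertion of $g(A)$ inside the conditional-expectation identity above does not give what is needed. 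The standard cure is to work in the framework of stable convergence of Renyi-Aldous: the Dambis-Dubins-Schwarz representation $N_s^{(t)} = W^{(t)}_{\langle N^{(t)}\rangle_s}$ embeds each scalar martingale into a Brownian motion defined on a suitable enlargement of the original probability space, and the convergence in probability of the terminal time change together with the path continuity of $W^{(t)}$ delivers the desired stable convergence and hence the joint distributional limit with $A$.
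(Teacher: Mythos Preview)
The paper does not prove this theorem at all: it is stated in the appendix purely as a quoted result, with the explicit attribution ``see \cite[Theorem 4.1]{Zanten}'' and no accompanying argument. So there is no ``paper's own proof'' to compare against; you have written a proof sketch for a result that the authors simply import from the literature.

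That said, your outline is broadly the right strategy and matches how such martingale CLTs are established (including in van Zanten's original work): reduce to one dimension via Cram\'er--Wold, control the characteristic function through the complex Dol\'eans exponential, and upgrade from weak to stable convergence so that the limit can be coupled with the arbitrary random matrix $A$. You have also correctly flagged the two genuine technical points --- uniform integrability when removing the localization, and the fact that joint convergence with $A$ requires stable (not merely weak) convergence, for which the DDS time-change is the standard tool. As a sketch this is sound; as a full proof it would still need the details of the stable-convergence step made rigorous, since that is where the independence of $Z$ from $(\eta,A)$ actually comes from.
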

\section*{}
\makeatother
\bibliographystyle{plain}
\bibliography{doubleHestonModel}
\newpage		

\end{document}